\def\const{\text{\rm const}}
\def\spec{\text{\rm spec}}
\def\ti{\tilde}
\def\ker{\text{\rm ker}}
\def\PW{\text{\rm PW}}
\def\dist{\text{\rm dist}}
\def\supp{\text{\rm supp}\,}
\def\to{\rightarrow}
\def\sign{\text{\rm sign}}
\def\ms{\medskip}
\def\no{\noindent}
\def\tsim{\scalebox{.7}{$\ \overset{\scriptscriptstyle{\bf T}}{\sim}\ $}}
\def\tleq{\scalebox{.7}{$\ \overset{\scriptscriptstyle{\bf T}}{\leqslant}\ $}}
\def\tgeq{\scalebox{.7}{$\ \overset{\scriptscriptstyle{\bf T}}{\geqslant}\ $}}
\def\tl{\scalebox{.7}{$\ \overset{\scriptscriptstyle{\bf T}}{<}\ $}}
\def\seq{\scalebox{.7}{$\ \overset{\boldsymbol\cdot}{=}\ $}}
\def\R{{\mathbb R}}
\def\T{{\mathbb T}}
\def\D{{\mathbb{D}}}
\def\Z{{\mathbb{Z}}}
\def\C{{\mathbb{C}}}
\def\N{{\mathbb{N}}}
\def\NN{{\mathcal{N}}}
\def\ZZ{{\mathcal{Z}}}
\def\DD{{\mathcal{D}}}
\def\b{{\textbf b}}
\def\MM{{\mathcal M}}
\def\CC{{{\textsl{C}}}}
\def\SS{{\mathcal S}}
\def\TT{{\mathcal T}}
\def\e{\varepsilon}
\def\L{\Lambda}
\def\l{\lambda}
\def\G{\Gamma}
\def\lan{\lambda_n}
\theoremstyle{plain}
\newtheorem{lemma}{Lemma}
\newtheorem{theorem}{Theorem}
\newtheorem{corollary}{Corollary}
\newtheorem{proposition}{Proposition}
\newtheorem{claim}{Claim}
\newtheorem{definition}{Definition}
\newtheorem{example}{Example}
\numberwithin{equation}{section}
\author{A.~Poltoratski}
\address{Texas A\&M University
\\ Department of Mathematics\\
College Station, TX 77843, USA\\
and \\
Department of Mathematics and Mechanics\\ St. Petersburg State University\\ St. Petersburg, Russia }
\email{alexeip@math.tamu.edu}
\thanks{Initial  work on this article (Sections 1-5) was supported by
NSF Grant DMS-1665264. The article was completed (Sections 6-9) with support of RNF grant 14-41-00010}
\title{Toeplitz Order}
\begin{document}

\begin{abstract} A new approach to problems of the Uncertainty Principle in Harmonic Analysis, based on the use of Toeplitz operators, has brought progress to some of the classical problems in the area. The goal of this paper is to develop and systematize the function theoretic component of the Toeplitz approach by introducing a partial order on the set of inner functions induced by the action of Toeplitz operators. We study connections of the new order
with some of the classical problems and known results. We discuss remaining problems and possible directions for further research.
\end{abstract}

\maketitle

\tableofcontents

\ms\section{Introduction}

\ms\no Toeplitz operator $T_U$ with symbol $U\in L^\infty(\R)$ on the Hardy space $H^2$ in the upper half-plane $\C_+$ is defined as
$$T_U f=P_+Uf,$$
where $P_+$ denotes the orthogonal projection from $L^2(\R)$ onto $H^2$ (see Section \ref{secT} for further discussion).
This standard definition can be extended to larger function spaces and more general symbols to accommodate various applications of Toeplitz-type operators in Complex and Harmonic analysis.
A recently developed
approach based on the use of Toeplitz operators brought new progress to the area of Uncertainty Principle in Harmonic Analysis (UP), see
for instance \cite{MIF1, MIF2, CBMS}. This note is devoted to further development of the Toeplitz approach.

\ms\no One of the cases of the Toeplitz operator which appears most often in applications is the operator with the symbol $U=\bar IJ$ where $I$ and $J$ are
inner functions. Recall that a bounded analytic function in the upper half-plane is called inner if its boundary
values are unimodular almost everywhere with respect to Lebesgue measure on the boundary.

\ms\no Inner functions constitute arguably the most important collection of functions in the standard one-dimensional complex function theory. Starting with the seminal
result by Beurling, which says that all closed invariant subspaces of the shift operator $Sf:f\mapsto zf$ in the Hardy space $H^2$ in the unit disk have the form
$\theta H^2$ where $\theta$ is inner, these functions became a focal point of research for complex analysts. Beurling's result implies that the invariant
subspaces for the operator adjoint to $S$, the backward shift operator $S^*f:f\mapsto (f-f(0))/z$, have the form $K_\theta=(\theta H^2)^\perp=H^2\ominus \theta H^2$.
This property of the spaces $K_\theta$ put them into the foundation of the famous Nagy-Foias functional model theory which says that
any  completely non-unitary contractive operator $T$ in a Hilbert space $H$, satisfying $||(T^*)^nx||\to 0$ for all $x\in H$, is unitarily equivalent to a compression of multiplication by $z$ on one
of such $K_\theta$ spaces for a properly chosen inner function $\theta$ (in general, such spaces are vector-valued, see \cite{Nikolski}).

\ms\no These fundamental results demonstrated the importance of inner functions and related spaces in function theoretic problems stemming from functional analysis. Such problems
became the main stream of complex function theory in the last several decades of the 20th century. At present, inner functions are firmly established as a key ingredient of complex analysis and appear in most of its applications, including Harmonic Analysis, Control Theory, Spectral Theory of differential operators, Signal Processing and Mathematical Physics. Via the same connections, Toeplitz operators of the type $T_{\bar IJ}$ where $I$ and $J$ are inner functions, appear in many of such applications.

%This note is devoted to inner functions and their roles in several important applications. The unifying theme for our study is a new partial ordering of the set of inner functions induced by the action of Toeplitz operators. We use Toeplitz order to reveal new relations between known problems of Complex and Harmonic
%analysis and to systematize a number of problems of complex function theory related to inner functions.

\ms\no Problems on injectivity and invertibility of Toeplitz operators with symbols $\bar I J$  have been known to play
crucial role in the study of Riesz bases, frames and completeness in various function spaces, see for instance \cite{HNP, MIF1}. As was mentioned before, recently
such operators have become a central object in the  Toeplitz approach to UP  \cite{MIF1, MIF2, CBMS}. Via the Toeplitz approach, these and similar operators apply to many fields of analysis including questions in Fourier analysis and spectral problems for differential operators, see for instance \cite{MIF1, CBMS, Uncertainty, Etudes}.

\ms\no Intuitively, the property that the Toeplitz operator $T_{\bar I J}$ has a non-trivial kernel means that $I$ is, in some sense, larger than $J$. Similarly,
invertibility of such an operator indicates that $I$ and $J$ are 'equivalent' or have roughly the same 'size'. However, as we will
discuss in Section \ref{secEx}, neither of these properties
 can yield a formal definition of  order or equivalence, since they lack axiomatic properties of transitivity and reflexivity correspondingly.

 \ms\no In this note we attempt to fix this problem and 'lift' these intuitive notions to the level of formal order and equivalence. Via the Toeplitz approach
 the new order encompasses a variety of problems and applications mentioned above.
It reveals relations between  problems of Complex and Harmonic
analysis and helps to systematize some of the well-known questions from the area of UP and its applications.
 The goal of this paper is to present the basic definitions and properties of  Toeplitz order, outline its connections
 with known problems, and to suggest further directions for research.

\ms\no {\bf Acknowledgment}: This paper is based on a mini-course given at the Universite Paul Sabatier, Toulouse, in October of 2016, as a part of
CIMI Thematic Semester in Analysis. I am very grateful to the organizers
Serban Belinschi, Stefanie Petermichl and Pascal Thomas  for giving me a reason and an opportunity to collect my thoughts on this subject.

\section{Preliminaries}

\subsection{Inner functions}
In this note we will mostly concern ourselves with inner functions in the upper half-plane $\C_+$.
Such functions can be represented as
a product
$$I=B_\L J_\mu,$$
where $B_\L$ is the Blaschke product corresponding to the sequence $\L=\{\lan\}\subset\C_+$ of zeros of $I$ and $J_\mu$ is a singular inner
function corresponding to a positive  singular measure $\mu$ on $\hat\R=\R\cup\{\infty\}$. The measure  can be represented as $\mu=\nu+c\delta_\infty$ where $\nu$ is Poisson-finite on $\R$, i.e.,
$$\int \frac{d\nu(x)}{1+x^2}<\infty,$$
 and $c\geq 0$ is the mass at infinity. The singular function $J_\mu$ is defined as
$$J_\mu=e^{-\SS\mu}=e^{-\SS\nu+icz},$$
where $\SS\mu$ is the Schwarz integral of $\mu$:
$$\SS \mu(z)=\frac1{\pi i} \int\left[\frac1{t-z}-\frac t{1+t^2}\right]d\mu(t).$$

\ms\no The Blaschke Product $B_\L$   for $\L=\{\lan\}$ is defined as
$$B_\L=\prod c_n\frac{z-\lan}{z-\bar\lan},$$
where $c_n$ are unimodular constants chosen so that $c_n\frac{i-\lan}{i-\bar\lan}>0$. If $\L$ is an infinite sequence
then the necessary and sufficient condition for the normal convergence of the partial products of $B_\L$ is
that $\L$ satisfies the Blaschke condition
$$\sum \frac {\Im \lan}{1+|\lan |^2}<\infty.$$

\ms\no Throughout this paper we will use the notation $S^a(z)=e^{iaz}$ for the complex exponential function, which is the singular inner function corresponding to the pointmass $a>0$ at
infinity. Using our notations $S^a=J_{a\delta_\infty}$.

\ms\no Similar statements and formulas are true for the case of the unit disk, see for instance \cite{Garnett, KoosisHp}.

\ms\no   A special role in our notes will be played by meromorphic inner functions (MIF) which are inner functions in the upper half-plane
that can be extended meromorphically to the whole plane. The above formulas imply that an inner function is a MIF if and only if
its Blaschke factor corresponds to a discrete sequence $\L\subset \C$ (a sequence without finite accumulation points) and
the measure in the singular factor is a point mass at infinity, i.e. $J_\mu=S^a=e^{iaz}$ for some non-negative $a$.

\subsection{Model spaces and Clark theory}\label{secMod}

 For each inner function $\theta(z)$ one may consider a model subspace
$$K_\theta=H^2\ominus \theta H^2$$ of the Hardy space
$H^2=H^2(\C_+)$. Here '$\ominus$' stands for the orthogonal difference, i.e., $K_\theta$ is the orthogonal
complement of the space $\theta H^2=\{ \theta f| f\in H^2\}$ in $H^2$. As was mentioned in the introduction, these subspaces play an important role in complex and
harmonic analysis, as well as in operator theory,
see~\cite{Nikolski}.

\ms\no   Each inner function $\theta(z)$ defines a positive harmonic
function
$$\Re \frac{1+\theta(z)}{1-\theta(z)}$$
 and, by the Herglotz
representation, a positive measure $\sigma$ such that
\begin{equation} \label{for1} \Re
\frac{1+\theta(z)}{1-\theta(z)}=py+\frac{1}{\pi}\int{\frac{yd\sigma
(t)}{(x-t)^2+y^2}}, \hspace{1cm} z=x+iy,\end{equation} for some $p
\geq 0$. The number $p$ can be viewed as a point mass at infinity.
The measure $\sigma$ is  a singular Poisson-finite measure, supported on the set where non-tangential limits
of $\theta$ are equal to $1$.
The measure
$\sigma +p\delta_\infty$ on $\hat\R=\R\cup \{\infty\}$
is called the Clark measure for $\theta(z)$.

\ms\no Following  standard notations, we will sometimes denote the Clark measure defined in \eqref{for1} by $\sigma_1$.
If $\alpha\in \C, |\alpha|=1$, then $\sigma_\alpha$ is the measure defined by \eqref{for1}
with $\theta$ replaced by $\bar\alpha\theta$.
In some settings it is convenient to call the measue $\sigma_{-1}$ the 'Clark dual' of the measure $\sigma_1$.

\ms\no   Conversely, for
every positive Poisson-finite singular measure $\sigma$  and a number $p \geq
0$, there exists an inner function $\theta(z)$ satisfying~\eqref{for1}.

\ms\no   Every function $f \in K_\theta$ has non-tangential boundary values
$\sigma_1$-a.e. and can be recovered from these values via the formula
\begin{equation} \label{for2} f(z)=\frac{p}{2\pi
i}(1-\theta(z))\int{f(t)\overline{(1-\theta(t))}dt}+\frac{1-\theta(z)}{2\pi
i}\int{\frac{f(t)}{t-z} d\sigma (t)}
\end{equation}
%If $1-\theta(t) \notin L^2(\R)$ then $p=0$ and hence we have a nicer
%looking formula
see \cite{PoltClark}.
If the Clark measure does not have a point mass at infinity,
the formula is simplified to
\begin{equation}f(z)=(1-\theta(z))Kf\sigma\label{for2a}\end{equation}
where $Kf\sigma$ stands for the Cauchy integral
\begin{equation}Kf\sigma(z)=\frac1{2\pi i}\int\frac{f(t)}{t-z} d\sigma (t).\label{CauchyIntegral}\end{equation}
This gives an isometry of
$L^2(\sigma)$ onto $K_\theta$.
The Clark measure $\sigma_1$ has a point mass at infinity if and only if $1-\theta(t) \in
L^2(\R)$.

\ms\no Similar formulas can be written for any $\sigma_\alpha$ corresponding to $\theta$.
For any $\alpha,\ |\alpha|=1$ and any $f\in K_\theta$, $f$ has non-tangential boundary
values $\sigma_\alpha$-a.e. on $\hat\R$. Those boundary values can be used in \eqref{for2} or \eqref{for2a} to
recover $f$.

 \ms\no  In the case of meromorphic $\theta(z)$ (MIF),
every function $f \in K_\theta$ also has a meromorphic extension in
$\C$, and it is given by the formula~\eqref{for2}.

%For more details on Clark measures the reader may consult \cite{PolSar} or the references therein.

\ms\no   Each meromorphic inner function $\theta(z)$ can be written as
$\theta(t)=e^{i\phi(t)}$ on $\R$, where $\phi(t)$ is a real analytic
and strictly increasing function. The function $\phi(t)=\arg{\theta(t)}$ is
a continuous branch of the argument of $\theta(z)$.

 \ms\no   For any inner function $\theta$ in the upper half-plane we define its spectrum $\spec_\theta$ as the closure of the set
$\{\theta=1\}$, the set of points on the line where the non-tangential limit of $\theta$ is equal to $1$,
plus the infinite point if the corresponding Clark measure has a point mass at infinity, i.e. if
$p$ in \eqref{for1} is positive. If $\spec_\theta\subset\R$, then
$p$ in \eqref{for1} is $0$.

%We will denote by $S$ the exponential inner function
%$S(z)=\exp(iz)$.

 \ms\no  Recall that a sequence of real points is discrete if it has no finite accumulation points. Note
that $\spec_\theta$ is discrete if and only if $\theta$ is  meromorphic. The
corresponding Clark measure is discrete with masses at the points
of the set $\{\theta=1\}$ given by
$$\sigma(\{x\})=\frac{2\pi
}{|\theta'(x)|},$$
plus possibly a point mass at infinity (related similarly to the derivative at infinity).

\ms\no   If $\L\subset \R\ (\hat\R)$ is a given discrete sequence, one can easily construct a meromorphic
inner function $\theta$ satisfying $\{\theta=1\}=\L$ by  considering a positive
Poisson-finite measure concentrated on $\L$ and then choosing $\theta$ to satisfy
\eqref{for1}. One can prescribe the derivatives of $\theta$ at $\L$ with a proper choice
of pointmasses.

\ms\no The same construction shows that an arbitrary continuous growing function $\gamma$ on $\R$ can
be approximated, up to a bounded function, by the argument of a meromorphic inner function.
If $\L=\{\gamma=2\pi n\}$ then $\theta$, constructed as above with $\{\theta=1\}=\L$, satisfies
$|\gamma-\arg \theta|<2\pi$ on $\R$. Furthermore, if $\G=\{\gamma=(2n+1)\pi\}$ one can easily construct
$\theta$ so that $\{\theta=1\}=\L$ and $\{\theta=-1\}=\G$ and achieve an even better approximation
$|\gamma-\arg \theta|<\pi$.

%The last construction is discussed in sections \ref{DS1Section}  and %\ref{ss0001} of chapter \ref{ToeplitzChapter} in
%connection with the two spectra problem.

\ms\no  For more information and further references on Clark measures
see \cite{PolSar}.

\subsection{Toeplitz kernels}\label{TKSection}\label{secT}

  Recall that the Toeplitz operator $T_U$ with a symbol $U\in
L^\infty(\R)$ is the map
$$T_U:H^2\to H^2,\qquad F\mapsto P_+(UF),$$
where $P_+$ is the Riesz projection, i.e. the orthogonal projection  from $L^2(\R)$ onto the
Hardy space $H^2$. Passing from a function in $H^2$ to its  non-tangential boundary values on $\R$,
$H^2$ can be identified with a closed subspace of $L^2(\R)$  formed by functions $f\in L^2(\R)$ whose Fourier transform $\hat f$ is supported on $[0,\infty)$,
which makes the Riesz projection
correctly defined.

\ms\no   We will use the following notation for kernels of Toeplitz
operators (or {\it Toeplitz kernels}) in $H^2$:
$$N[U]=\ker{\ T_U}.$$
An important observation is that $N[\bar\theta]=K_\theta$ if $\theta$ is an inner
function. Along with $H^2$-kernels, one may consider Toeplitz kernels $N^p[U]$
in other Hardy classes $H^p$, the kernel $N^{1,\infty}[U]$ in the 'weak' space $H^{1,\infty}=H^p\cap L^{1,\infty},\ 0<p<1$, or the kernel in the Smirnov class $\mathcal{N}^+(\C_+)$, defined as $$ N^+[U]=\{f \in \mathcal{N}^+ \cap
L^1_{loc}(\R): \bar{U}\bar{f} \in \mathcal{N}^+\}$$
for $\mathcal{N}^+$ and similarly for other spaces.

\ms\no   If $\theta$ is a meromorphic inner function, $K^+_\theta = N^+[\bar\theta]$ can also be considered. For more on such kernels see  \cite{CBMS}.

\subsection{Entire functions and de Branges spaces}\label{dBrSpacesSection}\label{secE}

  Recall that an entire function $F(z)$ is said to be
of exponential type at most $a>0$
if
$$|F(z)|=O(e^{a|z|})$$
as $z\to\infty$. The infimum of such $a$ is the exponential type of $F$. Throughout the paper we denote by
$\Pi$ the Poisson measure on $\R,\ d\Pi(x)=dx/(1+x^2)$.

\ms\no   A classical theorem of Krein gives a connection between the
Smirnov class $\mathcal{N}^+(\C_+)$ and the Cartwright class
$\CC_a$ consisting of all entire functions $F(z)$ of exponential
type $\leq  a$ which satisfy $$\log|F(t)| \in L^1_\Pi.$$ An
entire function $F(z)$ belongs to the Cartwright class $\CC_a$ if
and only if
$$ \frac{F(z)}{S^{- a}(z)} \in N^+(\C_+), \ \ \textrm{ and }\ \
\frac{F^{\#}(z)}{S^{- a}(z)} \in N^+(\C_+),$$ where
$F^\#(z)=\overline{F(\bar{z})}$.

\ms\no Recall that a Paley-Wiener space $PW_a$ is defined as a space
of entire functions of exponential type at most $a$ which belong
to $L^2(\R)$. Equivalently, $PW_a=\CC_a\cap L^2(\R)$.
  As an immediate consequence one obtains a connection between the
Hardy space $H^2(\C_+)$ and the Paley-Wiener space $\PW_a$. Namely,
an entire function $F(z)$ belongs to the Paley-Wiener class $\PW_a$
if and only if
$$ \frac{F(z)}{S^{- a}(z)} \in H^2(\C_+), \hspace{1cm}
\frac{F^{\#}(z)}{S^{- a}(z)} \in H^2(\C_+).$$

\ms\no   The definition of the de~Branges spaces of entire functions may be
viewed as a generalization of the last definition of the
Paley-Wiener spaces with $S^{-a}(z)$ replaced by a more general
entire function. Consider an entire function  $E(z)$ satisfying
the inequality
$$ |E(z)|>|E(\bar{z})|, \ \  z \in \C_{+}.$$ Such
functions are usually called  de~Branges functions. The de~Branges
space $B(E)$ associated with $E(z)$ is defined to be the space of
entire functions $F(z)$ satisfying
$$ \frac{F(z)}{E(z)} \in H^2(\C_+), \hspace{1cm}
\frac{F^{\#}(z)}{E(z)} \in H^2(\C_+).$$ It is a Hilbert space
equipped with the norm $\|F\|_E =\|F/E\|_{L^2(\R)}.$ If $E(z)$ is of exponential type then
all the functions in the de~Branges space $B(E)$ are of
exponential type not greater then the type of $E(z)$  (see, for example, the last part in the proof of lemma 3.5 in~\cite{DM}). A de~Branges
space is called short (or regular) if together with
every function $F(z)$ it contains $(F(z)-F(a))/(z-a)$ for any
$a\in\C$.

\ms\no One of the most important features of de Branges spaces is that they admit a second, axiomatic, definition. Let $H$ be
a Hilbert space of entire functions that satisfies the following axioms:

\begin{itemize}

\item (A1) If $F\in H,\ F(\l)=0$, then $\frac{F(z)(z-\bar\l)}{z-\l}\in H$ with the same norm;

\item (A2) For any $\l\not\in\R$, point evaluation at $\l$ is a bounded linear functional on $H$;

\item (A3) If $F\in H$ then $F^\#\in H$ with the same norm.

\end{itemize}

\ms\no Then $H=B(E)$ for a suitable de Branges function $E$. This is theorem 23 in \cite{dBr}.

\ms\no Usually, for a given Hilbert space of entire functions it is not difficult to check the above axioms and conclude
that the space is a de Branges space (if the axioms do hold). It is however a challenging problem in many situations to find a generating
function $E$. This problem can be viewed as a deep and abstract generalization of the inverse spectral problem for
second order differential operators.

\ms\no Every de Branges space $B(E)$ is a reproducing kernel Hilbert space, i.e., for each point $\l\in\C$ there exists a function $k_\l\in B(E)$ such that
$$F(\l)=<F,k_\l>$$
for any $F\in B(E)$. The reproducing kernel $k_\lambda$ is given by the formula
$$k_\lambda(z)=\frac{E(z)\bar E(\lambda)-E^\#(z)E(\bar\lambda)}{2\pi i(\bar\lambda-z)}.
$$

\ms\no
It is not difficult to show that for any de Branges function $E$ sequences of reproducing kernels $\{k_\l\}_{\l\in\L}$, where $\L\subset\R,\ \L=\{E^\#/E=\alpha\}$
for some constant $\alpha, |\alpha|=1$, form orthogonal bases of $B(E)$. Moreover, these are the only orthogonal bases of reproducing kernels.

\ms\no De Branges spaces possess the so called nesting property, which makes Krein-de Branges theory
especially suitable to study spectral problems for differential operators. It says that for  any two de Branges spaces
$B(E_1)$ and $ B (E_2)$ isometrically embedded into a third de Branges space, either
$B(E_1)\subset B (E_2)$ or
$B(E_2)\subset B(E_1)$. It follows that any space $B(E)$ admits a unique chain of subspaces $B(E_t), \ 0\leq t\leq 1$ monotone by inclusion with
$E_0=\const$ and $E_1=E$ (in the case of so-called jump intervals the parameter $t$ may not take all values from 0 to 1). Moreover,
for any positive Poisson-finite measure $\mu$ on $\R$ there is a unique regular chain of de Branges spaces isometrically embedded into $L^2(\mu)$.

 \ms\no  Every de~Branges function $E(z)$ gives rise to a MIF $$\theta(z) =\theta_E(z)=E^\#(z)/E(z)$$ and a model space $K_{\theta}$ that this inner function generates. There exists a well known isometric isomorphism between $B(E)$ and $K_{\theta}$ given by $F\to F/E$. Conversely, for every MIF $\theta$
 there exists a de Branges function $E$ such that $\theta=\theta_E$. Such a function $E$ is unique up to a multiplication by a real entire function without zeros in $\C\setminus \R$ (an entire function is called real if it is real on $\R$).  We call a de Branges function $E$ an Hermite-Biehler (HB) function if it has no zeros on the real line.
 For a given MIF $\theta$
one can always choose the corresponding de Branges function $E$ to be an HB function.

 \ms\no
  As was mentioned above, if $\theta$ is a MIF then all  Clark measures $\sigma_{\alpha}$ of $\theta$ are discrete and their point masses can be computed by $\sigma_{\alpha}(\lambda)=2\pi/|{\theta'(\lambda)}|$ for $\lambda\in\{\theta=\alpha\}$.
We will call the measures $|E|^2\sigma_{\alpha}$, where $\sigma_\alpha$ is a Clark measure for $\theta(z)=E^\#(z)/E(z)$,  spectral measures of the corresponding de~Branges space. It is well known (and follows from a similar property for Clark measures) that for any spectral measure $\nu$ of a de Branges space $B(E)$ the natural embedding gives
an isometric isomorphism between $B(E)$ and $L^2(\nu)$. This isomorphism generalizes the Parseval theorem.

\ms\no   On the real line each inner $\theta(z)$ coming from a de~Branges function can be written as $\theta(t)=e^{i\phi(t)},\ t\in\R$, where $\phi(t)$ is real analytic strictly increasing function, a continuous branch of the argument of $\theta(z)$ on $\R$. The phase function of the corresponding de~Branges space is defined by $\psi(t)=\phi(t)/2$ and is equal to $-\arg E$.

%\subsection{Some notations}

\ms\no Throughout this paper we will utilize the following notations for the objects discussed in the last several sections.
If $E$ is an HB function we will denote by $\theta_E$ the corresponding MIF $\theta_E=E^\#/E$. If $\mu$ is a positive singular Poisson-finite
measure on $\hat \R$ we denote by $\theta_\mu$ the inner function with the Clark measure equal to $\mu$.
Even though for a MIF $\theta$ the function $E$ such that $\theta=\theta_E$ is not unique, we will use the notation $E_\theta$ for
one of such functions. The reader may think of a function with lowest order and type among all such HB functions $E$.

\section{Toeplitz Order (TO) and Equivalence (TE)}

\subsection{Main definitions} In this section we use Toeplitz operators to define partial order and equivalence on the
set of inner functions in the upper half-plane. Our definitions can be naturally extended to broader classes of functions
and measures (see Section \ref{secEx} below), however in this note we choose to concentrate on the inner case. Moreover, in most applications
discussed in the rest of this paper, the inner functions are meromorphic (MIFs).

\ms\no We begin with the following definition. Recall that $N[U]$ denotes the $H^2$-kernel of the Toeplitz operator with symbol $U$.

\begin{definition}
If $\theta $ is an inner function we define its (Toeplitz) dominance set $\DD(\theta)$ as
$$\DD(\theta)=\{ I\textrm{ inner }|\ N[\bar\theta I]\neq 0\}.$$
\end{definition}

\ms\no Every collection of  sets admits natural partial ordering by inclusion. In our case, we consider dominance sets $\DD(\theta)$
as subsets of the set of all inner functions in the upper half-plane and the partial order $\subset$ on this collection.
This partial order induces a preorder on the set of all inner functions in $\C_+$. Proceeding in a standard way, we can modify this preorder into a partial
order by introducing equivalence classes of inner functions. The details of this definition are as follows.

\begin{definition}
We will say that two inner functions $I$ and $J$ are Toeplitz equivalent, writing $I\tsim J$, if $\DD(I)=\DD(J)$. This equivalence relation divides
the set of all inner functions in $\C_+$ into equivalence classes. We call this relation \textbf{\textit{Toeplitz equivalence}} (TE).
\end{definition}

\ms\no Further, we introduce a partial order on these equivalence classes
defining it as follows.

\begin{definition} We write $I\tleq J$ (meaning that the equivalence class of $I$ is 'less or equal' than the equivalence class of $J$) if
$\DD(I)\subset \DD(J)$.
We call this  partial order on the set of inner functions in $\C_+$
\textit{\textbf{Toeplitz order}} (TO).
\end{definition}

\ms\no The following simple examples illustrate our definitions.

\begin{example}
Let $B_n$ and $B_k$ be  Blaschke products of degree $n$ and $k$ correspondingly. Then $B_n\tsim B_k$ iff $n=k$ and $B_n\tl B_k$ iff $n<k$.

\ms\no If $J_\mu$ and $J_\nu$ are two singular functions, $J_\mu\tleq J_\nu$ if $\nu-\mu$ is a non-negative measure. However, there exist
$\mu$ and $\nu$ such that $\mu\perp \nu$ but $J_\mu\tleq J_\nu$, as follows from an example given in \cite{BS}.
\end{example}

\ms\no It is a good exercise on Toeplitz kernels to establish the statements of the above example.

\subsection{Extensions of TO and other orderings}\label{secExt}\label{secEx}
As was explained in Section \ref{secMod}, Clark theory provides a natural one-to-one correspondence between inner functions in $\C_+$
and positive singular Poisson-finite measures on $\hat\R$. Via this correspondence one may introduce Toeplitz equivalence and order
on the set of all such measures. I.e., for any two positive singular Poisson-finite measures $\mu$ and $\nu$, $\mu\tsim\nu$ if $I_\mu\tsim I_\nu$
and $\mu\tleq\nu$ if $I_\mu\tleq I_\nu$. Similarly, Toeplitz order on inner functions induces an order on Hermite-Biehler functions, de Branges spaces,
canonical systems, model spaces, model contractions, etc.

\ms\no In the same way one can order the set of all unimodular functions on the real line. If $U=e^{i\phi}$ is a unimodular function on $\R$ (and $\phi$ is
a measurable real function) then one can defined its dominance set $\DD(U)$ as the set of inner functions $\theta$ such that $N[\bar U\theta]$ is non-trivial.
After that, once again using the ordering of dominance sets by inclusion, one can introduce equivalence classes on the set of unimodular functions
and partial order on those classes.
In a slightly different way, one may view the ordering described above as an ordering of equivalence classes of real measurable functions $\phi$ on $\R$ defined as $\phi\tleq\psi$ if $e^{i\phi}\tleq e^{i\psi}$. Analogously, TO can be moved from the upper half-plane to the unit disk or even more general domains. Without any changes in the above definitions, TO can be extended to bounded analytic functions in $\C_+$ or even unbounded functions if one is
willing to deal with unbounded Toeplitz operators.

\ms\no Using quadratic forms one can consider Toeplitz operators with distributional symbols. If $m$ is a distribution on $\R$ then
$\DD(m)$ can be defined as the set of inner functions such that $T_{\theta\bar m}$ exists and has a non-trivial kernel. After finding a way to
overcome obvious technical difficulties in this definition, one can proceed with an extension of TO to this class. In particular, one
obtains a different way to extend TO to the set of measures and it may be interesting to study relations with the extension outlined above.

\ms\no Perhaps the simplest way to order inner functions is by division, i.e., to say that $I\leq J$ if $I$ divides $J$ (if $J/I$ is an inner function).
The main flaw of the order by division is that most pairs of inner functions remain incomparable. It is easy to see that TO is an extension
of the order by division since $I\tleq J$ whenever $I$ divides $J$, see Section \ref{secdiv}. While for two functions to be comparable in the order by division the zero set
of one has to be a subset of the zero set of the other, in TO one zero set only needs to be 'near' the other.

\ms\no Another way to define an order on inner functions is to say that $I>J$ if $N^\infty[\bar IJ]\neq 0$ or if $N^+[\bar IJ]\neq 0$ (the kernel in the Smirnov
class $\NN^+$). These orders are different from ours. The $N^+$-order is related to (and used implicitly in) the Beurling-Malliavin theory. This order is meaningful, but less
relevant to  problems discussed in these notes. As follows from Lemma \ref{lemdom}, TO is a proper extension of the $H^\infty$-version of the above order.

\ms\no While all  versions of Toeplitz order mentioned in this section seem to be interesting, in this note we will concentrate on the inner version of TO in $\C_+$ as defined in the last section.

\section{Structure of the dominance set $\DD(\theta)$}\label{DS1Section}

\ms\no The goal of this section is to study the dominance set $\DD(\theta)$, the key element of Toeplitz order.
We will identify two important subsets of $\DD(\theta)$, the sets of base and total elements, and discuss their relations with adjacent questions.

\subsection{Difference of arguments}\label{secdif}

\ms\no Let $I,J$ be two MIFs and let us denote by $\phi=\phi(I,J)$ the difference of arguments $\frac12(\arg I-\arg J)$. Recall that
the argument of a MIF on the real line can be chosen as a real analytic function and therefore the last expression makes sense.
If $\phi(I,J)$ is Poisson-summable then its harmonic conjugate $\ti\phi$ exists and we will denote by $h(I,J)$
the outer function $\exp(\ti \phi -i\phi)$. Note that then $h(I,J)=1/h(J,I).$

\ms\no Clearly, a sufficient condition for $I\tsim J$ is that $\phi$ has a bounded harmonic conjugate, i.e.,
$$0<c<h<C<\infty$$
on $\R$ for some constants $c$ and $C$. Indeed,
in that case $f\in N[\bar I L]$ iff $hf \in N[\bar J L]$, which implies that $\DD(I)=\DD(J)$.
However, this condition is not necessary.

\begin{example} \label{exone} Let $I=B_\L$, where $\lan=2^{n}(1+i), n\in \N$. Notice that then $|I'|=O(1/x)$ on
$\R$ as $x\to\infty$.
Let us construct $J$ in the following way.
For a rare subsequence $n_k=2^k,k\in \N,$ pull the zeros closer to the real line, i.e., define
$$J=I\prod \left(d_k\frac{z-(2^{n_k}+ic_k)}{z-\l_{n_k}}\cdot\frac{z-\bar\l_{n_k}}{z-(2^{n_k}-ic_k)}\right),$$
where $c_k>0$ are positive constants tending to zero and $d_k$ are convergence constants. Then $\phi=\phi(J,I)$ satisfies $0<c<\exp(\ti \phi)$
on $\R$, because
$$\exp(\ti \phi)=\prod q_k \frac{1- z/{\bar\l_{n_k}}}{1- z/{(2^{n_k}-ic_k)}}$$
with proper convergence constants $q_k$. One can  show that if $c_k$ tend to zero slow enough (say, $c_k=1/k$) we also have
$$\exp(\ti \phi)\in L^2(\R, |I'|^{}dx)\setminus L^\infty(\R).$$
 If $f\in N[\bar I L]$ for some MIF $L$
then $hf\in N[\bar J L]$ because $f$ belongs to $K_I$ and hence is bounded on $\R$ by $C|I'|^{1/2}$.
Conversely, if $f\in N[\bar J L]$ then $f/h\in N[\bar I L]$ because $1/h$ is bounded. Therefore
$I\tsim J$ even though $\ti\phi$ is unbounded.
\end{example}

\subsection{Base and total elements of $\DD(\theta)$}\label{secB}

We say that an inner function $I\in \DD(\theta)$ is a base element if it does not  divide any other
element of $\DD(\theta)$. In other words, base
 elements are the maximal elements of $\DD(\theta)$ with respect
to the order by division. We will denote by $\DD_B(\theta)$ the set of all base elements of $\DD(\theta)$.

\ms\no We denote by $b_a$ the Blaschke factor with zero at $a\in\C_+$:
$$b_a=\frac {\bar a} a\frac{z-a}{z-\bar a}.$$
If $\theta(a)=0$ for some $a\in\C_+$ then $\theta_a=\theta/b_a$ is a base element of $\DD(\theta)$. More generally, one can show that
if $\theta(c)=a$ for some $c\in\C_+$, then $\b_a(\theta)/b_c$ is a base element, where $\b_a$ is the M\"obius transform of the unit disk with
zero at $a$,
$$\b_a=\frac{z-a}{1-\bar a z}.$$
A general description of the set $\DD_B(\theta)$ in terms of $\theta$ is an important but difficult problem. As we  will see in Section \ref{secTOin}, it generalizes the problem of describing complete and minimal sequences of reproducing kernels in model and de Branges spaces.

\ms\no Let $I$ and $J$ be two inner functions. We say that $f\in N[\bar IJ]$ is purely outer if $f$ is outer
%, has no zeros on $\R$
and
$$\bar I J f=\bar g$$
where $g$ is outer. Note that then automatically $f=g$.
%In the case when $I$ is a MIF, $f$ is analytic through $\R$ and the zeros on $\R$
%are understood in the usual sense. In the general case a zero on $\R$ is a point $x$ such that $f/(z-x)\in H^2$.

\ms\no We call an element $I$ of $\DD(\theta)$ total, if $N[\bar \theta I]$ contains a purely outer function.
We chose this name for such elements because, in a sense, each total element represents a total inner component
of one of the functions from $N[\bar\theta]=K_\theta$. Indeed, if $If\in N[\bar\theta]$ for some inner $I$ and
outer $f$, then
$$\bar \theta If=\bar J\bar f$$
for some inner $J$. Then
$$\bar\theta IJf=\bar f$$
and therefore $N[\bar \theta IJ]$ contains a purely outer function and $IJ$ is a total element of $\DD(\theta)$.
Moreover, every total element can be obtained this way, i.e., it is a total inner component of a function from $N[\bar\theta]$,
combining inner components in both half-planes. We denote by $\DD_T(\theta)$ the subset of all total elements of $\DD(\theta)$.

\ms\no One can show that together with each  function $I$ the set $\DD_T(\theta)$ it contains every $J$ such that
$I/J$ is a finite Blaschke product.
It follows that $\DD_T(\theta)$ contains the set  of all  inner divisors $I$ of $\theta$ such that $\theta/I$ is a
finite Blaschke product. Finite products can be replaced is this statement
with all Blaschke products whose arguments $\psi$ satisfy $\psi/2\in \textrm{Log} |H^2|$.
Here we denote by $\textrm{Log} |H^2|$ the set of functions
$$\{f|\  f=\ln|g|, \ g\in H^2(\C_+)\}.$$ In other words,
$\textrm{Log} |H^2|$ consists of real functions $f$ such that $f\in L^1_\Pi$ and  $\exp(f_+)\in L^2$,
where $f_+=\max(f,0)$.

\begin{proposition}\label{p200}
	Every element of $\DD(\theta)$ is a factor of a total element.
\end{proposition}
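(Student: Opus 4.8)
The plan is to realize the sketch given in the paragraph preceding the statement: starting from an arbitrary nonzero element of the kernel $N[\bar\theta I]$, I produce a purely outer function in a kernel $N[\bar\theta K]$ for some total $K$ that has $I$ as an inner divisor. First I would fix $I\in\DD(\theta)$ and choose $f\in N[\bar\theta I]$ with $f\neq 0$. By the definition of the Toeplitz kernel this means $f\in H^2$ and $\bar\theta I f=\bar g$ for some $g\in H^2$. The first observation is that $G:=If$ lies in $N[\bar\theta]=K_\theta$: indeed $G\in H^2$ and $\bar\theta G=(\bar\theta I)f=\bar g\in\overline{H^2}$. This reduces the problem to an element of $K_\theta$ that already carries $I$ as an inner divisor.

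Next I would perform the inner--outer factorization $f=I_fO_f$, so that $G=I'O_f$ with $I':=II_f$ inner and $O_f$ outer and nonzero. Writing $\bar\theta I'O_f=\bar h$ with $h\in H^2$ and comparing moduli on $\R$, where $|\bar\theta|=|I'|=1$, gives $|h|=|O_f|$ a.e., and in particular $h\neq 0$. Hence $J:=h/O_f$ has unimodular boundary values; since $1/O_f$ is outer and $J=h\cdot(1/O_f)\in\mathcal{N}^+$, and a Smirnov-class function with unimodular boundary values is inner (Smirnov's theorem), $J$ is an inner function. Substituting $h=JO_f$ into $\bar\theta I'O_f=\bar h=\bar J\,\overline{O_f}$ and multiplying through by $J$ (using $J\bar J=1$ on $\R$) yields
\[
\bar\theta\,(I'J)\,O_f=\overline{O_f}.
\]
This says precisely that $O_f$ is a purely outer element of $N[\bar\theta(I'J)]$, so $K:=I'J=II_fJ$ is a total element of $\DD(\theta)$; and since $K/I=I_fJ$ is inner, $I$ divides $K$, which is the claim.

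I expect the only genuinely delicate point to be the verification that $J=h/O_f$ is inner, i.e. the passage from equality of boundary moduli to membership in $\mathcal{N}^+$ and the application of the Smirnov maximum principle; everything else is bookkeeping about the inner--outer factorization and the definition of a purely outer kernel element. A secondary technical nuisance is the possible point mass at infinity of the associated Clark measures, equivalently the growth of $O_f$ and $h$ at $\infty$, but this does not affect the modulus-matching step and is absorbed into the standard factorization of $\mathcal{N}^+$ functions.
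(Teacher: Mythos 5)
Your argument is correct and is essentially the paper's own proof, which is stated in one line (``if $\bar\theta I Jf=\bar L\bar f$ for some inner $J,L$ and outer $f\in H^2$, then $JLI$ is the desired total element''): your $I_f$, $J$, $O_f$ play the roles of the paper's $J$, $L$, $f$ respectively. You merely supply the details the paper leaves implicit, in particular the Smirnov-class verification that $h/O_f$ is inner.
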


\ms\no To prove the last statement just notice that if $\bar\theta I Jf=\bar L\bar f$ for some inner $J,L$ and outer $f\in H^2$, then  $JLI$ is the desired
total element.

%If not, consider a finite positive discrete measure $\mu$ on $\R$, $\mu(\l)>0$ for each zero of $f$. The pointmasses of $\mu$ can be
%chosen so that $fK\mu\in H^2$. Denote by $P$ the MIF corresponding to $\mu$. Then $g=f/(1-P)\in H^2$ and
%$$\bar\theta JLIP g=\bar g$$
%on $\R$. Since $g$ is zero-free on $\R$, $JLIP$ is a base element.
%\end{proof}

\ms\no In regard to relations between our new sets we have

\begin{proposition}\label{pbt} For every inner $\theta$
$$\DD_B(\theta)\subset\DD_T(\theta)\subset\DD(\theta).$$
The sets $\DD_T$ and $\DD_B$ are equal iff $\theta$ is a Blaschke factor (in which case they are
also equal to $\DD(\theta)$ and consist of constants).
\end{proposition}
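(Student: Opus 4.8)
The plan is to dispatch the inclusions first and then the equality criterion. The inclusion $\DD_T(\theta)\subset\DD(\theta)$ is built into the definition of a total element. For $\DD_B(\theta)\subset\DD_T(\theta)$ I would use Proposition \ref{p200}: an arbitrary base element $I$ is a factor of some total element $T$, and since $T\in\DD_T(\theta)\subset\DD(\theta)$ while $I$ is maximal in $\DD(\theta)$ under division, $I$ and $T$ must be associates; thus $I=T$ is itself total. This settles $\DD_B(\theta)\subset\DD_T(\theta)\subset\DD(\theta)$.

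For the equality criterion, take first $\theta=b_a$. Here I would compute $\DD(b_a)$ outright: $I\in\DD(b_a)$ means there is a nonzero $f\in H^2$ with $\bar b_a(If)\in\overline{H^2}$, i.e. $If\in N[\bar b_a]=K_{b_a}$. As $K_{b_a}$ is one-dimensional and spanned by a nonvanishing (outer) function, $If$ is a nonzero multiple of an outer function, and the inner factor $I$ is forced to be constant. Hence $\DD(b_a)$ consists exactly of the unimodular constants; being mutually associate they are all maximal under division, so $\DD_B(b_a)=\DD(b_a)$, and the chain then squeezes $\DD_T(b_a)$ between them, giving $\DD_B(b_a)=\DD_T(b_a)=\DD(b_a)$. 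This is the ``if'' direction.

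For the converse I would argue contrapositively: assuming $\theta$ non-constant and not a single Blaschke factor, I would exhibit an element of $\DD_T(\theta)\sm\DD_B(\theta)$. The geometric input is that such a $\theta$ assumes some value $a\in\D$ with total multiplicity at least two, so that $\b_a\circ\theta$ is an inner function with (at least) two zeros $c_1,c_2\in\C_+$. Granting this, $I_0=\b_a(\theta)/b_{c_1}$ is one of the base elements identified in Section \ref{secB}, hence total; peeling off the remaining factor, $I_1=I_0/b_{c_2}$ stays in $\DD_T(\theta)$ because $\DD_T(\theta)$ is closed under division by finite Blaschke products. But $I_1$ divides the distinct element $I_0\in\DD(\theta)$, so $I_1$ is not maximal and $I_1\notin\DD_B(\theta)$. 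Thus $\DD_B(\theta)\subsetneq\DD_T(\theta)$.

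The crux, and the step I expect to be the main obstacle, is the geometric input of the previous paragraph: a non-constant inner function that is not a single Blaschke factor must take some value at least twice. I would obtain this from the dichotomy that a valence-one inner function is univalent and hence a conformal bijection of $\C_+$ onto $\D$, i.e. a single Blaschke factor (a form of Frostman's theorem); equivalently, $\theta$ either has two or more zeros, so $0$ is attained twice, or carries a nontrivial singular factor and then attains almost every value infinitely often, as already $S^a=e^{iaz}$ does. A final bookkeeping point is the degenerate case $\theta\equiv\const$, for which $N[\bar\theta I]=N[I]=0$ for every inner $I$ so that $\DD(\theta)=\emptyset$; this falls outside the stated equivalence and should be excluded by the standing assumption that $\theta$ is non-constant.
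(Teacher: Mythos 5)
Your proof is correct, and its overall architecture matches the paper's, with two substitutions worth recording. For $\DD_B(\theta)\subset\DD_T(\theta)$ you cite Proposition \ref{p200} and maximality; the paper instead reruns the factorization inline (for a base element $I$ and $0\neq f\in N[\bar\theta I]$, the inner factor of $f$ and the inner function $J$ in $\bar\theta If=\bar J\bar f$ must be constant, else $I$ properly divides another element $JI\in\DD(\theta)$). Since Proposition \ref{p200} is proved by exactly that computation, your version is the same argument more cleanly packaged. For the equality criterion both proofs use the same mechanism --- produce a base element still carrying a Blaschke factor, strip that factor off, and use closure of $\DD_T$ under division by finite Blaschke products to land in $\DD_T\sm\DD_B$ --- but they source the base element differently: the paper appeals to the unproved remark that $\DD_B(\theta)$ cannot consist of singular functions only, whereas you build one explicitly as $\b_a(\theta)/b_{c_1}$ from a value $a$ of valence at least two (Frostman), leaning instead on the Section \ref{secB} claim that $\b_a(\theta)/b_c$ is a base element. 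Each route outsources one ``one can show'' statement from Section \ref{secB}; yours buys a constructive witness, an explicit verification of the ``if'' direction (that $\DD(b_a)$ consists of constants because $K_{b_a}$ is one-dimensional and spanned by an outer function), and the observation that constant $\theta$ must be excluded --- none of which the paper spells out.
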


\begin{proof}
If $I$ is a base element then the relation $\bar \theta If=\bar J\bar f$ implies that $f$ is outer and $J$ is constant: otherwise $I\div JI\in\DD(\theta)$,
%If $f$ has zeros on $\R$ then one can choose a MIF $L$ such that $L=1$ at zeros of $f$ and $g=f/(1-L)\in H^2$.
%Indeed, if $\mu=\sum\alpha_n\delta_{\lan}$ be a finite positive measure sitting at zeros of $f,\ \{\lan\}$ on $\R$.
%The masses $\alpha_n$ can be chosen so that all $\alpha_n>0$ and $f\SS\mu\in L^2$. It remains to notice that
%$\frac 1{1-\theta}=\frac 12(1+\SS\mu).$ Since $\theta/(1-\theta)^2$ is real a.e. on $\R$,
%$g\in N[\bar\theta L I]$
which contradicts that $I$ is a base element.
Hence, $f\in N[\bar\theta I]$ is purely outer and $I$ is also a total element. The second statement follows from the fact
 $\DD_T$ contains  base elements divided by any finite Blaschke sub-products. Notice that $\DD_B$ cannot consist
 of singular functions only.

\end{proof}

\ms\no Since together with every element $\DD(\theta)$ contains all of its inner divisors, Proposition \ref{p200} implies that $\DD(\theta)$ is
determined by the set of its total elements. The inverse statement follows from Theorem \ref{t500}:

\begin{corollary}\label{cor200}
	If $I\tsim J$ then $\DD_T(I)=\DD_T(J)$.
\end{corollary}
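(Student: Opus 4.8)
The plan is to reformulate totality as a factorization of the symbol and then transfer that factorization across Toeplitz equivalence. Recall that $L\in\DD_T(\theta)$ means that $N[\bar\theta L]$ contains a purely outer $f$, i.e. $\bar\theta L f=\bar f$ on $\R$ with $f$ outer; equivalently the unimodular symbol factors as $\bar\theta L=\bar f/f$ for some outer $f\in H^2$. By symmetry it suffices to prove $\DD_T(I)\subset\DD_T(J)$. So fix $L\in\DD_T(I)$ with outer witness $f$, and write
$$\bar J L=(\bar J I)(\bar I L)=(\bar J I)\,\frac{\bar f}{f}.$$
Thus it is enough to produce an outer $g\in H^2$ with $\bar J I=\bar g/g$: then $F=gf$ is outer (a product of outer functions is outer), $\bar J L=\bar F/F$, and $F$ is a purely outer element of $N[\bar J L]$, so $L\in\DD_T(J)$.

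First I would check the model case in which the difference of arguments $\phi=\tfrac12(\arg I-\arg J)$ has a bounded harmonic conjugate. Then, as in Section \ref{secdif}, $g=h(I,J)=\exp(\tilde\phi-i\phi)$ is outer with $0<c<|g|<C<\infty$ on $\R$, and indeed $\bar J I=\bar g/g$. In this regime multiplication by $g$ maps $N[\bar I L]$ onto $N[\bar J L]$ and preserves outerness, so the witness transfers immediately; this recovers the sufficient condition for $I\tsim J$ from Section \ref{secdif} and disposes of all pairs comparable in this strong sense.

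The substantive point is that Toeplitz equivalence does \emph{not} force $\tilde\phi$ to be bounded: Example \ref{exone} produces $I\tsim J$ for which $\exp(\tilde\phi)\in L^2(\R,|I'|\,dx)\sm L^\infty(\R)$. Hence the conversion factor $g$ will in general be an unbounded outer function, and one cannot simply multiply through, since the outer witness $f\in K_I$ is itself only controlled by $|I'|^{1/2}$ on $\R$. The heart of the argument is therefore a structural lemma (which I expect to be Theorem \ref{t500}): the mere equality of dominance sets $\DD(I)=\DD(J)$ already forces $\bar J I$ to be an outer quotient $\bar g/g$ with $g$ outer in the weighted sense that keeps $gf$ outer and square-integrable against the relevant Clark weight. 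To prove such a lemma I would exploit that $\DD(I)=\DD(J)$ makes the kernels $N[\bar I M]$ and $N[\bar J M]$ vanish \emph{simultaneously} for every inner $M$; testing this against $M=L$ and against the total elements themselves, and combining with the inner--outer factorization and with the description of total elements through $\textrm{Log}\,|H^2|$ in Section \ref{secB}, should pin down the outer factor $g$. The main obstacle I anticipate is precisely ruling out the alternative in which equivalence of \emph{all} kernels coexists with an unavoidable inner factor being forced into \emph{every} nonzero element of $N[\bar J L]$; excluding this degenerate possibility -- i.e. showing the purely outer witness cannot be destroyed by an argument-preserving but unbounded change of symbol -- is where the real work lies. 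Granting the lemma, the factorization of the first paragraph yields $L\in\DD_T(J)$, and symmetry gives $\DD_T(I)=\DD_T(J)$; together with Proposition \ref{p200} this shows that $\DD(\theta)$ and $\DD_T(\theta)$ determine one another.
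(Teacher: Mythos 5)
Your argument is correct and is essentially the paper's: the paper deduces the corollary from Theorem \ref{t500} (equality of $B(E_I)$ and $B(E_J)$ as sets), and your reduction closes at exactly the same point, since the outer conversion factor you need is $g=h(I,J)=E_I/E_J$ and the fact that $gf\in H^2$ for the purely outer witness $f\in N[\bar I L]\subset K_I$ is precisely the multiplier statement of Theorem \ref{thmmult}, itself a reformulation of Theorem \ref{t500}. The speculative middle portion of your write-up is therefore unnecessary: once Theorem \ref{tto} is granted, $F=h(I,J)f$ is outer, lies in $K_J$, and satisfies $\bar J L F=\bar F$, which finishes the proof.
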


\ms\no To deduce the above corollary note that total elements of $\DD$ are the total inner components of the de Branges space. The spaces $B(E_I)$ and
$B(E_J)$ must coincide as sets by Theorem \ref{t500}.

\subsection{$\DD(\theta)$ and $\DD_T(\theta)$ in terms of arguments}

It is not difficult to describe elements of $\DD_T(\theta)$ in terms of arguments. Let us start with $\DD_T(\theta)$
in the case when $\theta $ is a MIF. In this case all functions from $\DD_T(\theta)$ are MIFs and their arguments
are real-analytic functions on $\R$ (defined uniquely up to $2\pi n$). Recall the notation $\phi(I,J)=\frac 12(\arg I-\arg J)$.

\begin{proposition}\label{p100}
	$$I\in\DD_T(\theta)\Leftrightarrow \ti\phi(\theta,I)\in \textrm{Log} |H^2|.$$
	
\end{proposition}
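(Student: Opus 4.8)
The plan is to reduce the total-element condition to a single scalar equation on $\R$, solve it explicitly using that the unknown is outer, and then read off the integrability condition that places the solution in $H^2$. By definition $I\in\DD_T(\theta)$ means that $N[\bar\theta I]$ contains a purely outer $f$, i.e. an outer $f$ with $\bar\theta If=\bar g$ for some outer $g$; since then $g=f$ automatically, the requirement is an outer $f\in H^2$ with $\bar\theta If=\bar f$ on $\R$. As $\theta$ and $I$ are MIFs, on the line $\bar\theta I=e^{-2i\phi}$ with $\phi=\phi(\theta,I)$ real-analytic, so the equation becomes $e^{-2i\phi}f=\bar f$.

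Next I would solve this equation. Comparison of moduli is automatic, while comparison of arguments forces $\arg f\equiv\phi\pmod\pi$; using outerness, so that $\arg f=\widetilde{\log|f|}$ up to an additive constant, and applying the conjugation once more (recall $\widetilde{\widetilde u}=-u$ and that the conjugate of a constant vanishes), the relation $\widetilde{\log|f|}=\phi$ turns into $\log|f|=-\tilde\phi$. Hence, up to a unimodular constant, the unique candidate is the outer function $f=\exp(-\tilde\phi+i\phi)=1/h(\theta,I)$, with $|f|=e^{-\tilde\phi(\theta,I)}$. A direct modulus/argument check confirms it solves the equation, equivalently the identity $e^{2i\phi}h(\theta,I)=\overline{h(\theta,I)}$, the same cancellation that makes multiplication by a bounded $h$ intertwine Toeplitz kernels in the preliminary discussion; and it is a genuine nonzero outer function as soon as $\phi$ is Poisson-summable so that $\tilde\phi$ exists.

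It then remains to decide when this candidate lies in $H^2$. An outer function belongs to $H^2$ exactly when its boundary modulus is in $L^2(\R)$ and its log-modulus is in $L^1_\Pi$; since $|f|=e^{-\tilde\phi}$ this is precisely the statement that $-\tilde\phi(\theta,I)=\tilde\phi(I,\theta)$ lies in $\textrm{Log}\,|H^2|$, by the description of $\textrm{Log}\,|H^2|$ as the real $u\in L^1_\Pi$ with $e^{u_+}\in L^2$. Both implications then follow: if such an $f\in H^2$ exists then $I\in\DD_T(\theta)$, and conversely any purely outer element of $N[\bar\theta I]$ must coincide with $f$ up to a constant and hence forces the $\textrm{Log}\,|H^2|$ membership.

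The step I expect to be the main obstacle is the careful bookkeeping of orientation and sign, since this is exactly what decides whether the criterion is read off with $\tilde\phi(\theta,I)$ or $\tilde\phi(I,\theta)$: one must track in lockstep that $\bar\theta I=e^{-2i\phi(\theta,I)}$ and that for outer functions $\arg=\widetilde{\log|\cdot|}$ with $\widetilde{\widetilde u}=-u$. A secondary technical point, needed to make the two directions symmetric, is the existence of $\tilde\phi$: in the forward direction one does not assume it but derives it, since a purely outer $f\in H^2$ has $\log|f|\in L^1_\Pi$, whence $\phi=\arg f$ is the conjugate of an $L^1_\Pi$ function and $\tilde\phi=-\log|f|$ is well defined, while in the reverse direction the hypothesis already presupposes it. Finally one should confirm, using the real-analyticity of the arguments and the meromorphic structure of $K_\theta$ recalled in Section \ref{secMod}, that no stray inner factor or mass at infinity disrupts the pointwise identities on $\R$.
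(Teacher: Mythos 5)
Your reduction of the total-element condition to the scalar equation $e^{-2i\phi}f=\bar f$ for an outer $f\in H^2$, and the identification of the unique outer candidate by passing to arguments and conjugating once more, is exactly the intended argument here (the paper offers no written proof beyond ``it is not difficult''), and every step of it checks out: the candidate is outer, solves the equation, lies in $N[\bar\theta I]$ precisely when its modulus is square-integrable, and any purely outer element must coincide with it up to a constant.

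The one substantive point is the sign you yourself flag as the main hazard, and your bookkeeping comes out on the opposite side from the displayed formula: you obtain $|f|=e^{-\ti\phi(\theta,I)}=e^{\ti\phi(I,\theta)}$, hence the criterion $\ti\phi(I,\theta)\in \textrm{Log}|H^2|$ rather than $\ti\phi(\theta,I)\in \textrm{Log}|H^2|$. Under the convention you use --- the one forced by the paper's own assertion that $h(I,J)=\exp(\ti\phi-i\phi)$ is \emph{outer}, i.e.\ $\arg O=\widetilde{\log|O|}$ and $\widetilde{\widetilde u}=-u$ --- your sign is the correct one. A concrete test: take $\theta=b_i^2$, $I=b_i$; then $f=1/(z+i)$ is purely outer in $N[\bar\theta I]$, so $I\in\DD_T(\theta)$, while $\phi(\theta,I)=\arg(x-i)$ gives $e^{\ti\phi(\theta,I)}=(1+x^2)^{1/2}\notin L^2$ and $e^{\ti\phi(I,\theta)}=(1+x^2)^{-1/2}=|f|\in L^2$. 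So either the proposition is to be read with the two arguments of $\phi$ interchanged, or the paper is tacitly using the opposite sign convention for the conjugate function (in which case the statement is fine but $h(I,J)$ as defined would be conjugate-outer); your derivation is internally consistent either way, and you should simply state which convention you adopt and record the criterion in the form your computation actually produces. A last, minor inherited blemish: the description of $\textrm{Log}|H^2|$ via $e^{u_+}\in L^2(\R)$ cannot be meant literally (that function is $\geq 1$ everywhere); the characterization you actually need and use is $u\in L^1_\Pi$ together with $e^{u}\in L^2(\R)$, which is what makes the outer function with modulus $e^u$ lie in $H^2$.
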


\ms\no As to $\DD(\theta)$, recall that it consists of all divisors of functions from $\DD_T(\theta)$.

\begin{corollary}
	$I$ belongs to $\DD(\theta)$ iff
	$\phi(\theta,I)=\ti h+\frac 12\alpha$ where $h\in\textrm{Log} |H^2|$
	and $\alpha$ is an argument of an inner function.
	
\end{corollary}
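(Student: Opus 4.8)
The plan is to combine three facts: the structural description, recorded just above, that $\DD(\theta)$ is exactly the set of inner divisors of total elements; the phase characterization of $\DD_T(\theta)$ in Proposition \ref{p100}; and the additivity of the argument under division. By the first fact, $I\in\DD(\theta)$ precisely when $J:=IM\in\DD_T(\theta)$ for some inner $M$. Writing $\alpha=\arg M$ (an argument of an inner function), additivity gives
$$\phi(\theta,J)=\phi(\theta,IM)=\phi(\theta,I)-\frac12\arg M=\phi(\theta,I)-\frac12\alpha.$$
So everything reduces to the claim that the total elements are exactly those $J$ for which $\phi(\theta,J)=\ti h$ for some $h\in\textrm{Log} |H^2|$.

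I would establish this phase description by unwinding the definition of a total element, re-deriving Proposition \ref{p100} through the purely outer function. If $J\in\DD_T(\theta)$, then $N[\bar\theta J]$ contains a purely outer $f$, so $\bar\theta J f=\bar f$ on $\R$; comparing boundary arguments where $f\neq0$ yields $2\arg f=\arg\theta-\arg J$, that is $\arg f=\phi(\theta,J)$. Since $f$ is outer its argument is the harmonic conjugate of its log-modulus, $\arg f=\ti{\ln|f|}$, and $h:=\ln|f|\in\textrm{Log} |H^2|$ because $f$ is the outer part of an $H^2$ function; hence $\phi(\theta,J)=\ti h$. Conversely, given $h\in\textrm{Log} |H^2|$ with $\phi(\theta,J)=\ti h$, I would let $f$ be the outer function with $\ln|f|=h$, so that $f\in H^2$ and $\arg f=\ti h=\phi(\theta,J)$; a direct check of moduli and arguments then gives $\bar\theta J f=\bar f$, exhibiting $f$ as a purely outer element of $N[\bar\theta J]$ and placing $J$ in $\DD_T(\theta)$.

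With the phase description in hand, both implications follow by bookkeeping. For the forward direction, any $I\in\DD(\theta)$ divides some total element $J=IM$ by Proposition \ref{p200}, and the displayed identity together with $\phi(\theta,J)=\ti h$ gives $\phi(\theta,I)=\ti h+\frac12\alpha$. For the converse, given a representation $\phi(\theta,I)=\ti h+\frac12\alpha$ with $h\in\textrm{Log} |H^2|$ and $\alpha=\arg M$ for an inner $M$, I would set $J=IM$; then $\phi(\theta,J)=\phi(\theta,I)-\frac12\alpha=\ti h$, so $J\in\DD_T(\theta)$ by the phase description, and since $I$ divides $J$ and $\DD(\theta)$ contains all inner divisors of its elements, $I\in\DD(\theta)$.

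The argument bookkeeping is routine; the real obstacle is the analytic regularity beneath it, the same issues that arise in Proposition \ref{p100}. I would have to justify that the a.e. boundary arguments of the (possibly non-meromorphic) inner functions $\theta,J,M$ may be added and matched pointwise a.e., that the outer function reconstructed from $h\in\textrm{Log} |H^2|$ genuinely lies in $H^2$ and not merely in the Smirnov class, and that the identity $\bar\theta Jf=\bar f$ really puts $f$ in the $H^2$-kernel, i.e. that $\bar f\in(H^2)^\perp$, which on the line is a Fourier-support condition requiring care with the possible point mass at infinity. Dually, in the forward direction one needs that the outer part $f$ of a kernel element satisfies $\ln|f|\in\textrm{Log} |H^2|$, which is exactly the integrability $|f|=|F|\in L^2$ together with $\log|f|\in L^1_\Pi$. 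Granting these analytic points, the corollary adds only the elementary divisor-and-phase-additivity layer on top of Proposition \ref{p100}.
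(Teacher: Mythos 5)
Your proposal is correct and follows essentially the same route as the paper, whose entire proof is the one-line observation that $\phi-\frac12\alpha$ is the argument of a purely outer element of the relevant Toeplitz kernel: you combine Proposition \ref{p200} (every element of $\DD(\theta)$ divides a total element, and $\DD(\theta)$ is divisor-closed) with the purely-outer-function characterization of $\DD_T(\theta)$ and the additivity of arguments. Your explicit flagging of the analytic caveats (Smirnov-to-$H^2$ passage, a.e.\ matching of boundary arguments) is a reasonable elaboration of details the paper leaves implicit.
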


\ms\no To establish the above statement, simply notice that $\phi-\frac 12\alpha$ for some argument of a MIF $\alpha$ is the argument of a purely outer element of $N[\bar \theta I]$.

\ms\no With some additional effort one can find analogs of statements from this section for general (non-MIF) inner functions.

%Let us try to describe $\DD_T(\theta)$ for a MIF. Suppose that $\theta=S^aB_\L$
%where $a\geq 0$ and $\lan=x_n+iy_n$.
%
%
%\begin{proposition}
%The set $\DD_T(\theta)$ consists of MIFs $S^cB_\G$ where the constant $a\geq c\geq 0$ and
%the Blaschke sequence $\G=p_n+iq_n$ satisfy
%$$\frac{a-c}2x+\sum_n \log (\frac{q_n(y_n-(x-x_n)^2)}{y_n(q_n+(x-p_n)^2)})
%$$
%
%\end{proposition}

\subsection{Total elements and de Branges' Theorem 66}

\ms\no A well known theorem by L. de Branges, Theorem 66
from \cite{dBr}, page 271, is an important result in the area of UP. One can
find a discussion of this result and its applications in \cite{SY}, along with further references.

\ms\no More general versions of this theorem from \cite{GAP, Type} played important roles in
the study of the Gap and Type problems, see also \cite{CBMS}. Here we present the statement from \cite{GAP}[Corollary 4]
in the settings of TO.

 \begin{theorem}\label{thm66}

Let $I,\theta$ be inner functions in $\C_+$, $\theta\in\DD(I)$.

\ms\no Then there exists an inner function $J$ in $\C_+$ such that $\spec_J\subset\spec_I$ and
$\theta\in \DD_T(J)$.

\ms\no The function $J$ can be chosen so that the purely outer $f\in N[\bar J\theta]$
is also zero-free on $\R$.
If $\theta$ is a meromorphic function, then $J$ can be chosen as a meromorphic function.

\end{theorem}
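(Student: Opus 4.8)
The plan is to translate both the hypothesis and the conclusion into a statement about a single outer function together with an inner ``argument deficit,'' and then to produce $J$ by the de~Branges/Beurling--Malliavin construction underlying the cited theorem. First I would unpack the hypothesis. By definition $\theta\in\DD(I)$ means $N[\bar I\theta]\neq 0$, so there is a nonzero $f\in H^2$ with $\bar I\theta f=\bar g$, $g\in H^2$; equivalently $\theta f=I\bar g$, which forces $|f|=|g|$ a.e.\ on $\R$. Writing the inner--outer factorizations $f=f_i\omega$ and $g=g_i\omega$ with a common outer part $\omega$ (legitimate since $|f_o|=|f|=|g|=|g_o|$, so the outer parts agree up to a unimodular constant) and setting $P:=f_ig_i$, a short computation using $1/f_i=\bar f_i$ on $\R$ gives the identity
\[
\bar I\theta\,\omega=\overline{P\omega}\qquad\text{on }\R .
\]
Thus $\omega$ is an \emph{outer} element of $N[\bar I\theta]$ whose only obstruction to being purely outer is the inner factor $P$; in argument form this is exactly the content of the corollary to Proposition~\ref{p100}, with $\phi(I,\theta)=\ti h+\tfrac12\arg P$ and $\ti h$ the conjugate of $h=\log|\omega|\in\textrm{Log}\,|H^2|$.

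Next I would reformulate the goal in the same language. A purely outer witness $W\in N[\bar J\theta]$ is characterised by $\theta W=J\bar W$, i.e.\ $J=\theta W/\bar W$ must be inner while $W$ is outer and in $H^2$. Seeking the witness in the form $W=\omega w$ with $w$ a new outer multiplier, the displayed identity turns the requirement $\theta W=J\bar W$ into
\[
J=I\bar P\,(w/\bar w).
\]
So the whole problem reduces to the following: \emph{find an outer function $w$ such that $I\bar P\,(w/\bar w)$ is an inner function $J$ with $\spec_J\subset\spec_I$ and such that $W=\omega w\in H^2$.} Since $w/\bar w=e^{2i\,\ti h_w}$, where $\ti h_w$ is the conjugate of $\log|w|$, the factor $w/\bar w$ contributes only a pure harmonic conjugate to $\arg J$; this is precisely the question of choosing a weight $\log|w|\in\textrm{Log}\,|H^2|$ that cancels the inner factor $\bar P$ modulo an inner function whose spectrum is confined to $\spec_I$. (That the output is automatically a total element can be double‑checked against Proposition~\ref{p100}, which expresses $\theta\in\DD_T(J)$ as $\ti\phi(J,\theta)\in\textrm{Log}\,|H^2|$, a condition that unwinds to an integrability condition on $|W|$.)

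The construction of such $w$ — equivalently of $J$ — is the heart of the matter, and here I would appeal to the de~Branges‑type result behind the statement, \cite{GAP}[Corollary 4]. The mechanism is to build $J$ ``inside'' $I$ by choosing its spectrum $\{J=1\}$ to lie in $\spec_I$: most transparently through the de~Branges chain $B(E_t)\subset B(E_I)$, whose generating inner functions $\theta_{E_t}$ have spectra monotonically exhausting $\spec_I$, one selects the member of the chain at which the induced multiplier $w$ becomes admissible (i.e.\ $\log|w|\in\textrm{Log}\,|H^2|$ and $W=\omega w\in H^2$). I expect the simultaneous constraint — summability/admissibility of the conjugate weight together with the spectral containment $\spec_J\subset\spec_I$ — to be the main obstacle; it is the genuinely Beurling--Malliavin part of the argument and the reason the statement is nontrivial.

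Finally, the two refinements should come from the same construction rather than from a separate argument. Choosing $w$ so that $\log|\omega w|$ stays locally bounded below keeps the purely outer witness $W$ from vanishing on $\R$, giving the zero‑free conclusion; and when $\theta$ (hence $I$) is meromorphic, the discrete choice $\{J=1\}\subset\spec_I$ forces $\spec_J$ to be discrete, so the resulting $J$ is again meromorphic.
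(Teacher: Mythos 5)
First, note that the paper itself offers no proof of this statement: it is explicitly presented as a restatement of \cite{GAP}[Corollary 4] in the language of TO, so the ``paper's proof'' is the citation. Your opening reduction is correct and worth having: from $\bar I\theta f=\bar g$ with $|f|=|g|$ on $\R$ you correctly extract a common outer part $\omega$, obtain the identity $\bar I\theta\,\omega=\overline{P\omega}$ with $P=f_ig_i$ inner (so that every nontrivial kernel $N[\bar I\theta]$ contains an outer element), and correctly reformulate the goal as producing an outer $w$ with $J=I\bar P\,(w/\bar w)$ inner, $\spec_J\subset\spec_I$, and $\omega w\in H^2$. The difficulty is that at this point you have reduced the theorem to itself and then cite \cite{GAP}[Corollary 4] --- which \emph{is} the theorem being proved --- for the remaining step. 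The entire content of the result is the existence of that $w$ (equivalently of $J$), so the proposal is circular where it matters.

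Moreover, the mechanism you sketch for that step would not work. The generating inner functions $\theta_{E_t}$ of a de Branges chain $B(E_t)\subset B(E_I)$ do \emph{not} have spectra that are nested subsets of $\spec_I$: already for the Paley--Wiener chain one has $\spec_{S^{2t}}=\tfrac{\pi}{t}\Z$, which is not contained in $\pi\Z$ for generic $t<1$. So ``selecting a member of the chain'' cannot produce the required containment $\spec_J\subset\spec_I$. The actual construction behind de Branges' Theorem 66 and \cite{GAP} runs through Clark measures rather than chains: from $f\in N[\bar I\theta]$ one observes that $\theta f\in K_I$, represents it via the Clark measure $\sigma$ of $I$ (supported on $\spec_I$) as $\theta f=(1-I)K(\theta f\,\sigma)$, and then defines $J$ as the inner function whose Clark measure is a suitable reweighting of $\sigma$ (essentially $|f|^2\sigma$ restricted to its support). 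This forces $\spec_J\subset\spec_I$ by construction, the Clark/Cauchy representation of the resulting kernel element yields the purely outer, zero-free witness, and discreteness of the support gives meromorphy of $J$ in the MIF case. That Clark-measure step is the genuinely nontrivial part, and it is absent from the proposal.
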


\ms\no If $I$ is a MIF, then $f$ in the statement is analytic through $\R$ and the term 'zero-free' can be understood
in the usual sense. In the general case, a function $f\in H^2$ has a zero at $x\in\R$ if $f/(z-x)\in H^2$, and a
zero-free function has no such points.

\ms\no Let us finish this section with the following problem. Given a collection of inner functions, we will call the minimal $\DD(\theta)$ containing these functions the Toeplitz hull (TH) of our collection. It seems to be an interesting question to find TH for a given collection. In view of our discussion in Section \ref{secTOin}, versions of this problem are equivalent to finding the minimal de Branges space or model space for a given collection of zero sets, etc.

\section{TO and TE in comparison with other relations}

\subsection{TO as a proper extension of the order by division}\label{secdiv}
As was mentioned before, another natural way to introduce a partial order on the set of inner functions is by division. We say that an inner function
$I$ divides another inner function $J$ if $J/I$ is an inner function. The relation 'divides' satisfies the axioms of a partial order.
Toeplitz order introduced above is an \textit{extension} of the order by division, i.e. if $I$ divides $J$ then $I\tleq J$.

\ms\no TO is a proper extension because one can easily construct a pair of inner functions $I$ and $J$ such that $I\tleq J$
but $J$ does not divide $I$. Indeed, choose any pair such that $J$ divides $I$ and $J$ has at least one zero.
Then that zero has also to be a zero of $I$. Take that zero of $I$ and move it by a finite distance in $\C_+$. It is not difficult to show (an exercise on Toeplitz kernels) that
then we still have $I\tleq J$, although $J$ no longer divides $I$.

%\subsection{Motivation}
%
%Equation between dB spaces as sets: $B(E)\asymp B(F)$ (spaces are equal as sets) exactly when $E\tsim F$.
%Gelfand-Levitan theory treats dB chains equivalent to PW chains. Also, see below about sampling measures,
%inverse for the Treil-Volberg Theorem, two-weight Hilbert problem.

\subsection{TO versus dominance}
 Intuitively, when $N[\bar I J]\neq 0$ for two inner
functions $I$ and $J$ it means that $I $ is 'larger' than $J$. This relation between $I$ and $J$ starts to resemble a strict order
even more after one recalls that by a lemma of Coburn $N[\bar I J]$ and $N[\bar J I]$ cannot be non-trivial simultaneously. Formally, however, this
relation does not constitute an order due to the lack of transitivity: $N[\bar I J]=N[\bar  JL]=0$ does not imply $N[\bar I L]=0$.

\ms\no Accordingly,
the relation $I\asymp J$, which can be defined to mean that  $N[\bar I J]=0$ and $N[\bar J I]=0$, fails to produce a formal equivalence.
An interesting geometric connection for this relation is observed in \cite{ACL}. It is shown that for two inner
functions $I\asymp J$ holds if and only if the subspaces $IH^2$ and $JH^2$, viewed as points in the Grassmanian manifold of all closed
subspaces of $L^2$, are connected by a geodesic. Lack of transitivity for this relation can be illustrated by the following example.

\begin{example}\label{extwo} Let us construct three MIFs $I, J$ and $L$ such that $I\asymp J$ and $J\asymp L$ but $I\not\asymp L$, where the relation $'\asymp'$
is defined as above.

\ms\no Let $C>0$ be a large number and let $I$ be a Blaschke product with zeros at $n +iC,\ n\in \Z$. Let $J$ be the Blaschke product with zeros at
$n +iC$ for $n< 0$ and at $(n+\frac 12) +iC$ for $n\geq 0$. Finally, let $L$ be the Blaschke product with zeros at $n +iC,\ n\in \Z, n\neq 0$.

\ms\no Then $\psi=2\phi(J,I)=\arg J -\arg I$ tends to 0 as $x\to-\infty$. For large positive $x$, $|\psi(x)+\frac\pi 2|<\e$, where $\e=\e(C)$ is a small number,
$\e(C)\to 0$ as $C\to\infty$.
From basic properties of Toeplitz kernels, since
$$\limsup_{x\to -\infty}\psi(x)-\liminf_{x\to \infty}\psi(x)<\pi\textrm{ and }$$
$$\limsup_{x\to -\infty}-\psi(x)-\liminf_{x\to \infty}-\psi(x)<\pi,$$
both $N[\bar I J]=0$ and $N[\bar J I]=0$, i.e., $I\asymp J$. Similarly, $\psi=\arg L-\arg J$
tends to 0 as $x\to-\infty$ and $|\psi(x)+\frac\pi 2|<\e$ near $\infty$, which implies $J\asymp L$.

It is left to notice that $N[\bar I L]=N[\bar b_{Ci}]$, where $b_{Ci}=-\frac{z+Ci}{z-Ci}$ is the Blaschke factor, and the kernel contains an $H^2$-function
$\frac 1{z-Ci}$. Hence $I\not\asymp L$.

\end{example}

\ms\no To study the relations between TO and triviality of kernels further let us formulate the following
statement, showing in particular that TO is an extension of the order mentioned at the end of Section \ref{secEx}.

\begin{lemma}\label{lemdom}
Let $I_1$ and $I_2$ be non-constant inner functions such that $N^\infty[\bar I_1 I_2]\neq 0$. Then $I_1\tgeq I_2$.

\end{lemma}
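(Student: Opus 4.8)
The plan is to unwind the definition of Toeplitz order and reduce the claim to a single algebraic manipulation on boundary values. By definition $I_1\tgeq I_2$ means $I_2\tleq I_1$, that is $\DD(I_2)\subset\DD(I_1)$; so what I must show is that every inner function $L$ with $N[\bar I_2 L]\neq 0$ also satisfies $N[\bar I_1 L]\neq 0$. I would fix such an $L$ and try to manufacture, out of a witness for $L\in\DD(I_2)$ and a witness for the hypothesis $N^\infty[\bar I_1 I_2]\neq 0$, an explicit nonzero element of $N[\bar I_1 L]$.

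First I would extract the content of the hypothesis. Since $N^\infty[\bar I_1 I_2]\neq 0$, there is a nonzero $f\in H^\infty$ with $\overline{\bar I_1 I_2 f}\in H^\infty$; writing $\bar g=\bar I_1 I_2 f$ gives a nonzero $g\in H^\infty$ (indeed $|g|=|f|$ a.e.\ on $\R$ because $\bar I_1 I_2$ is unimodular there), and multiplying by $\bar I_2$ yields the boundary identity
$$\bar I_1 f=\bar I_2\bar g \qquad\text{on }\R.$$
Next I would fix a nonzero witness $h\in N[\bar I_2 L]$, so that $\bar I_2 L h\in\overline{H^2}$; write $\bar k=\bar I_2 L h$ with $k\in H^2$. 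The claim is that $fh$ witnesses $L\in\DD(I_1)$. One checks $fh\in H^2$ because $f$ is bounded and $h\in H^2$, and $fh\neq 0$ since $f,h\neq 0$. Using the two boundary identities,
$$\bar I_1 L\,(fh)=(\bar I_1 f)(Lh)=(\bar I_2\bar g)(Lh)=\bar g\,(\bar I_2 L h)=\bar g\,\bar k=\overline{gk}.$$
Because $g\in H^\infty$ and $k\in H^2$ we have $gk\in H^2$, hence $\bar I_1 L(fh)\in\overline{H^2}$, i.e.\ $P_+(\bar I_1 L\,fh)=0$ and $fh\in N[\bar I_1 L]$. Thus $L\in\DD(I_1)$, and since $L$ was arbitrary, $\DD(I_2)\subset\DD(I_1)$, which is exactly $I_1\tgeq I_2$.

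The computation itself is short; the one point that genuinely needs care — and the reason the statement is phrased with $N^\infty$ rather than the weaker $N$ — is the boundedness of $f$, and with it of $g$. It is precisely this boundedness that keeps both products $fh$ and $gk$ inside $H^2$. Had $f$ been merely in $H^2$, the same manipulation would only place $fh$ in $H^1$, delivering at best a kernel in a larger Hardy class rather than the $H^2$-kernel demanded by the definition of $\DD$. This is also what allows the argument to evade the failure of transitivity of the plain dominance relation exhibited in Example \ref{extwo}: the extra integrability furnished by the $H^\infty$-kernel is exactly the input that makes the composition go through.
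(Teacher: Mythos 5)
Your proof is correct and is essentially the paper's own argument: both take a bounded witness $f$ for $N^\infty[\bar I_1 I_2]$ and an $H^2$ witness $h$ for $N[\bar I_2 L]$, and verify that the product $fh$ lies in $N[\bar I_1 L]$ via the factorization $\bar I_1 L fh=(\bar I_1 I_2 f)(\bar I_2 L h)\in\overline{H^2}$. Your closing remark about why the $H^\infty$-kernel (rather than the $H^2$-kernel) is the right hypothesis is a worthwhile observation that the paper leaves implicit.
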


\begin{proof} If $\theta\in \DD(I_2)$ then for $f\in N[\bar I_2\theta]$ and $g\in N^\infty[\bar I_1 I_2]$ we have
$$\bar I_1 \theta g f=(\bar I_1 I_2  g)(\bar I_2\theta f)\in\bar H^2.$$
Therefore $\theta\in \DD(I_1)$.

\end{proof}

\subsection{TE versus twins}\label{secTEvst}
Following \cite{MIF1}, we will call two MIFs $I$ and $J$ twins if $\spec_I=\spec_J$.
This relation naturally appears in applications to spectral problems involving isospectral differential operators.

\ms\no Clearly, twin relation is an equivalence relation
on the set of all MIFs, which is different from the Toeplitz equivalence. It is obvious that $I\tsim J$ does not imply that $I$ is a twin
of $J$. The opposite implication fails in general as well. However, we do have the following statement. We use the notation
$f\asymp g$ for two functions $f$ and $g$ if $c|f|<|g|<C|f|$ for some positive constants $c$ and $C$ and all values of the argument.

\begin{lemma}\label{lemcomp} Let $I$ and $J$ be two MIF twins with the common spectrum $\sigma\subset \hat \R$. Then $I\tsim J$ iff
$I'\asymp J'$ on $\sigma$.

\end{lemma}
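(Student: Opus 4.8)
The plan is to translate everything into the language of Clark measures and to use the elementary reformulation of dominance in terms of divisibility. Write $\sigma_1^I$ and $\sigma_1^J$ for the Clark measures of $I$ and $J$; since $I,J$ are MIF twins, both are discrete and supported on the common set $\sigma$, with point masses $\sigma_1^I(\{x\})=2\pi/|I'(x)|$ and $\sigma_1^J(\{x\})=2\pi/|J'(x)|$. Thus the hypothesis $I'\asymp J'$ on $\sigma$ is exactly the statement that the two Clark measures are comparable, $\sigma_1^I\asymp\sigma_1^J$. I would also record at the outset the identity $N[\bar\theta L]\neq 0\iff K_\theta\cap LH^2\neq\{0\}$: indeed $f\in N[\bar\theta L]$ means $Lf\in K_\theta$, so the nonzero elements of $N[\bar\theta L]$ are exactly the $\bar L F$ with $F\in K_\theta$ divisible by $L$. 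Hence $L\in\DD(\theta)$ iff $K_\theta$ contains a nonzero function divisible by $L$, equivalently (for meromorphic $L$ with zero set $\Lambda$ and type $a$) iff the reproducing kernels $\{k_w^\theta\}_{w\in\Lambda}$ are incomplete in $K_\theta$ after imposing the type condition from the $S^a$-factor. Since $I,J$ are MIFs every element of $\DD(\theta)$ is itself a MIF, so it suffices to treat $L=B_\Lambda S^a$.

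Next I would make divisibility explicit through the Clark representation \eqref{for2a}. Any $F\in K_\theta$ has the form $F(z)=(1-\theta(z))\frac1{2\pi i}\sum_{x\in\sigma}\frac{e_x}{x-z}$ with $e_x=F(x)\,\sigma_1^\theta(\{x\})$, and $\|F\|^2=\sum_x|e_x|^2/\sigma_1^\theta(\{x\})$. Because $1-\theta(w)\neq 0$ for $w\in\C_+$, divisibility of $F$ by $B_\Lambda$ is equivalent to the vanishing conditions $\sum_{x\in\sigma}\frac{e_x}{x-w}=0$ for every $w\in\Lambda$, while the $S^a$-factor imposes additional linear (moment/asymptotic) conditions on $\{e_x\}$ as $z\to i\infty$. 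The crucial observation is that all of these linear conditions depend only on $\sigma$, on $\Lambda$, and on $a$, and never on the masses. The masses enter solely through the admissibility requirement $\{e_x\}\in\ell^2(1/\sigma_1^\theta)$, i.e. $\sum_x|e_x|^2/\sigma_1^\theta(\{x\})<\infty$.

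This yields sufficiency at once. If $\sigma_1^I\asymp\sigma_1^J$, then $\ell^2(1/\sigma_1^I)=\ell^2(1/\sigma_1^J)$ with equivalent norms, so for a fixed $L=B_\Lambda S^a$ the homogeneous linear system above has a nonzero admissible solution for $I$ iff it has one for $J$. Hence $L\in\DD(I)\iff L\in\DD(J)$ for every inner $L$, which is precisely $\DD(I)=\DD(J)$, i.e. $I\tsim J$.

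For the converse I would argue by contraposition, and this is where the real difficulty lies. Assume $\sigma_1^I\not\asymp\sigma_1^J$; after relabelling we may pick a sparse subsequence $\{x_k\}\subset\sigma$ along which $\sigma_1^I(\{x_k\})/\sigma_1^J(\{x_k\})\to\infty$ as fast as we wish. Choosing $|e_{x_k}|^2=\sigma_1^I(\{x_k\})/k^2$ (and $e_x=0$ otherwise) produces a sequence lying in $\ell^2(1/\sigma_1^I)$ but not in $\ell^2(1/\sigma_1^J)$; letting $\Lambda$ be the zero set in $\C_+$ of the associated function $F_0\in K_I$, we get $F_0$ vanishing on $\Lambda$, so $\{k_w^I\}_{w\in\Lambda}$ is incomplete and $B_\Lambda\in\DD(I)$. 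To finish one must show $B_\Lambda\notin\DD(J)$, i.e. that no nonzero $G\in K_J$ vanishes on all of $\Lambda$. The main obstacle is precisely this rigidity step: a nonzero admissible $\{e_x'\}\in\ell^2(1/\sigma_1^J)$ whose Cauchy transform vanishes on $\Lambda$ would have to be forced to coincide, up to a scalar, with the $I$-side sequence $\{e_x\}$, contradicting $\{e_x\}\notin\ell^2(1/\sigma_1^J)$. I would try to obtain this by arranging $\Lambda$ so that $F_0$ is, up to a unit, the only element of either model space with that zero set; alternatively one can route the necessity through Corollary \ref{cor200} and Proposition \ref{p100}, turning $I\tsim J$ into the requirement that $\tilde\phi(J,L)\in\textrm{Log}|H^2|\iff\tilde\phi(J,L)+\tilde\phi(I,J)\in\textrm{Log}|H^2|$ for all inner $L$ (using additivity of argument differences and of harmonic conjugation), and then showing that non-comparability of the masses lets one select $L$ violating this equivalence. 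Establishing the required uniqueness/rigidity is the step I expect to demand the most care.
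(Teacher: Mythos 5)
Your sufficiency argument ($I'\asymp J'\Rightarrow I\tsim J$) is correct and is essentially the paper's: comparable point masses mean the two Clark measures generate the same class of complex measures $f\mu$, $f\in L^2(\mu)$, and since $1-I$ and $1-J$ are outer in $\C_+$, membership of an inner $L$ in the dominance set is read off from the inner factor of the Cauchy transform $K(f\mu)$ alone; hence $\DD(I)=\DD(J)$.

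The necessity direction, however, contains a genuine gap, and you have located it yourself: the rigidity step showing $B_\Lambda\notin\DD(J)$ is never carried out, and neither of the fixes you sketch obviously works. Arranging $\Lambda$ to be a maximal zero set gives uniqueness up to a scalar \emph{inside $K_I$}, whereas what you need is that no element of $K_J$ vanishes on $\Lambda$ except a multiple of (the transplant of) $F_0$ --- which is exactly the statement to be proven; moreover, if you enlarge $\Lambda$ to a maximal zero set a posteriori, the function vanishing on the enlarged set is a \emph{different} element of $K_I$ and need not inherit the crucial property that its Clark coefficients lie outside $\ell^2(1/\sigma_1^J)$. The paper closes this gap by a different device: starting from $f\in L^2(\mu)$ with $f\mu\neq h\nu$ for all $h\in L^2(\nu)$, it takes $\theta$ to be the \emph{total} inner component of $f$ --- the inner data in both half-planes, not merely the zero set in $\C_+$. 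By Corollary \ref{cor200} (which you mention as an alternative route but do not exploit), $I\tsim J$ forces $\DD_T(I)=\DD_T(J)$, so there is $g\in K_J$ whose Cauchy transform $Kg\nu$ has the \emph{same} total inner component as $Kf\mu$. Then $Kg\nu/Kf\mu$ is an entire, zero-free function of exponential type zero, hence a constant; this is precisely the rigidity you were missing, and it gives $f\mu=\const\cdot g\nu$, contradicting the choice of $f$. Note that the constancy argument genuinely requires matching the inner data in \emph{both} half-planes (otherwise the quotient could acquire zeros or poles in $\C_-$), which is why the total inner component, rather than $B_\Lambda$, is the right object here; without this step your proof of the implication $I\tsim J\Rightarrow I'\asymp J'$ is incomplete.
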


\begin{proof}
Let $\mu=\alpha_\infty\delta_\infty + \sum \alpha_n \delta_{x_n}$
and $\nu=\beta_\infty\delta_\infty + \sum \beta_n \delta_{x_n}$ be the Clark measures of $I$ and $J$ respectively.

\ms\no Suppose first that $I'\not\asymp J'$ on $\sigma$. Then by the formula for pointmasses of Clark measures given
in Section \ref{secMod}, $\alpha_n\not\asymp \beta_n$. WLOG, we can assume that there exists $f\in K_I \ (f\in L^2(\mu))$
such that $f\mu\neq h\nu$ for any $h\in L^2(\nu)$. Let $\theta\in \DD(I)$ be the total inner component of $f\in K_I$.  We can assume
that it is the inner component of $f$ in $\C_+$. Since $ f=(1-I)Kf\mu$,  $\theta$ is the inner factor of the Cauchy integral $Kf\mu$. If $I\tsim J$ then $\theta\in \DD(J)$ and there exists $g\in K_J$ such that $Kg\nu$ is divisible
by $\theta$ in $\C_+$. Moreover, by Corollary \ref{cor200}, $g$ can be chosen so that $\theta$ is its total inner component. Then $Kg\nu/Kf\mu$ is an entire function of exponential type zero without zeros. Hence it is a constant, which implies $f\mu=\const \cdot g\nu$ and we have a contradiction. %Therefore $I\not\tsim J$.

\ms\no It is left to notice that if $I'\asymp J'$ then $L^2(\mu)=L^2(\nu)$, which implies $\DD(I)=\DD(J)$ and $I\tsim J$.

\end{proof}

\subsection{TE and invertibility}
Another important relation between inner functions, which resembles equivalence, comes from invertibility
of the Toeplitz operator with the symbol $\bar IJ$.  Due to the work of Hruschev, Nikolski,  and Pavlov \cite{HNP}, this condition became one of the main tools in the study of basis properties for systems
of reproducing kernels, including the classical problem on exponential bases as a particular case. Up to some technical details,
a system of reproducing kernels $\{k_\l\}_{\l \in\L}$ forms a Riesz basis in a model space $K_I$ if and only if $T_{\bar I B_\L}$ is invertible.

\ms\no Intuitively, the condition that $T_{\bar IJ}$ is invertible also tells us that the functions
$I$ and $J$ are similar. This relation is reflexive as $T_{\bar IJ}$ is invertible iff $T_{\bar JI}$ is.
Our next goal is to show that Toeplitz equivalence is not the same as the invertibility of $T_{\bar IJ}$. As a matter of fact,
unlike Toeplitz equivalence, invertibility is not a formal equivalence since, once again, it lacks transitivity.
%\end{proposition}

\begin{example}\label{ex1} Similar to Example \ref{extwo}, construct $I_1,I_2,I_3$ so that the difference of arguments $$\arg I_{n+1} -\arg I_{n},\ n=1,2,$$ is smooth and close  to $\pi/3$ at $\infty$ and to $-\pi/3$ at $-\infty$.
Then $T_{\bar I_nI_{n+1}},\ n=1,2,$ is invertible but $T_{\bar I_1 I_3}$ is not, as follows from a theorem by Devinatz and Widom. Thus invertibility does not induce an equivalence relation.
\end{example}

\section{Conditions for TE}\label{secCon}

\ms\no While we do not see a reasonable 'if and only if' condition which describes TE in terms of the arguments or other requisites of inner functions,
here we give some simple 'one-sided' conditions for two MIFs to be equivalent. Recall that for two inner functions $I$ and $J$
we denote by $\phi=\phi(I,J)$ the function $\phi=\frac 12(\arg I-\arg J)$. If $I\tsim J$ then $\ti \phi$ is Poisson summable
and $h=h(I,J)$ stands for the outer function $h=e^{\ti\phi -i\phi }$, $|h|=\exp{\ti\phi}$.

\ms\no As was mentioned before, if $\ti\phi(I,J)$ is bounded, i.e., $|h(I,J)|$ is bounded and
separated from zero on $\R$, then $I\tsim J$. This condition is not necessary as was shown in Example \ref{exone}.

\subsection{Conditions in terms of arguments and derivatives}
Our first necessary condition is in terms of the difference of arguments and derivatives.

\begin{lemma}\label{nec}
Let $I$ and $J$ be two MIFs, $I\tsim J$.  Then
$$\frac {|J'|}{|I'|}\exp{2\ti\phi}\asymp 1$$
on $\R$.

\end{lemma}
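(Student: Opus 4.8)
The plan is to connect Toeplitz equivalence to a statement about Clark measures (or equivalently, spectral measures of the associated de~Branges spaces) and then read off the derivative ratio from the pointmass formula $\sigma_\alpha(\lambda)=2\pi/|\theta'(\lambda)|$. The main idea is that $I\tsim J$ forces the model spaces $K_I$ and $K_J$ to be comparable in a quantitative way, and the factor $\exp(2\ti\phi)=|h(I,J)|^2$ is exactly the outer weight that intertwines them. First I would set $h=h(I,J)=e^{\ti\phi-i\phi}$, so that on $\R$ one has $|h|^2=\exp(2\ti\phi)$ and, since $h$ is outer with $\bar I J = \overline{h}/h \cdot (\text{unimodular outer correction})$, the multiplication $f\mapsto hf$ maps $N[\bar I L]$ into $N[\bar J L]$ formally. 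The point is that this intertwining is an \emph{isometry} between the relevant weighted $L^2$ spaces precisely when the derivative ratio is balanced.

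The cleanest route is via Lemma~\ref{lemcomp}, which I would try to reduce to whenever $I$ and $J$ are twins, and otherwise via a direct argument on total elements. Concretely, I would argue that $I\tsim J$ implies, through Corollary~\ref{cor200}, that $\DD_T(I)=\DD_T(J)$, i.e. the de~Branges spaces $B(E_I)$ and $B(E_J)$ coincide as sets of entire functions. By the identification $F\mapsto F/E$ between $B(E)$ and $K_\theta$, and the boundary behavior $|E_I|^2\asymp 1/|I'|$ (and likewise for $J$) that comes from $\theta=E^\#/E$ together with the Clark pointmass formula, the equality of these spaces as sets combined with the closed graph theorem should force the norms to be equivalent, i.e. $\|F/E_I\|_{L^2(\R)}\asymp\|F/E_J\|_{L^2(\R)}$ uniformly over $F$. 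Testing this norm equivalence against reproducing kernels concentrated near a real point $x$ then yields $|E_I(x)|^2\asymp|E_J(x)|^2$ pointwise, which translates into $|I'|\asymp|J'|$ up to the outer factor $h$ relating $E_I$ and $E_J$.

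The bookkeeping step I would carry out carefully is tracking how $h(I,J)$ enters. Since $\arg I-\arg J=2\phi$ and $E_I,E_J$ satisfy $\theta_{E}=E^\#/E$, the two de~Branges functions differ (after normalizing) by the outer function $h$: roughly $E_J\asymp h\,E_I$ on $\R$ in modulus, so $|E_J/E_I|\asymp\exp\ti\phi$. Feeding this into the norm-equivalence $|E_I|^{-2}\,dx$ versus $|E_J|^{-2}\,dx$ being comparable measures on $\R$ gives $|E_J/E_I|^2\asymp 1$ only after incorporating the derivative ratio, and unwinding $|E|^2\asymp 1/|\theta'|$ produces exactly $\frac{|J'|}{|I'|}\exp(2\ti\phi)\asymp 1$.

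The hardest part will be justifying the \emph{uniform} norm equivalence from the mere set-theoretic equality of the two de~Branges spaces, and in the non-meromorphic degeneracies, controlling the argument difference $\phi$ where $\ti\phi$ may fail to be genuinely bounded. Since the statement is restricted to MIFs, the arguments are real analytic and the Clark measures are discrete, so I expect the pointmass formula to make the pointwise comparison of $|E_I|$ and $|E_J|$ rigorous at the spectral points; the remaining difficulty is propagating that comparison to all of $\R$, which I would handle using that $1/|E|^2$ is, up to the bounded oscillation of $\arg\theta$ between consecutive spectral points, a smooth interpolation of its values at $\{\theta=1\}$. I anticipate this interpolation/propagation step, rather than the algebra relating $h$ and the derivatives, to be the real obstacle.
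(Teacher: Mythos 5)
Your overall strategy is in essence the paper's: reduce $I\tsim J$ to the statement that multiplication by the outer function $h$ is a bounded invertible operator between the two model (equivalently, de Branges) spaces --- this is exactly Theorem \ref{thmmult}, which the paper invokes directly --- and then test against reproducing kernels at real points. But the quantitative step where you extract the derivative ratio contains two genuine errors. First, the identity $|E|^2\asymp 1/|\theta'|$ is false: take $E(z)=(z+i)e^{-iz}$, for which $|E(x)|^2\asymp 1+x^2$ while $|\theta_E'|\asymp 1$ on $\R$. The Clark pointmass formula $\sigma(\{x\})=2\pi/|\theta'(x)|$ says nothing about $|E|$ by itself; the correct fact is that the diagonal reproducing kernel of $K_\theta$ at $x\in\R$ is $\|k_x^\theta\|^2=k_x^\theta(x)=|\theta'(x)|/2\pi$, and for $B(E)$ it is $|E(x)|^2|\theta'(x)|/2\pi$. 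Second, equivalence of the norms of two coinciding reproducing-kernel Hilbert spaces yields comparability of the \emph{diagonal kernels}, i.e. $|E_I(x)|^2\,|I'(x)|\asymp |E_J(x)|^2\,|J'(x)|$, not $|E_I(x)|^2\asymp|E_J(x)|^2$; the latter would say $e^{\ti\phi}=|E_I/E_J|\asymp 1$, i.e. $\ti\phi$ bounded, which Example \ref{exone} shows is not a consequence of $I\tsim J$.

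Once these are corrected the argument collapses to the paper's own: from boundedness of $g\mapsto hg$, $K_J\to K_I$, one gets $|h(x)|\,k_x^J(x)=|\langle k_x^I,\,hk_x^J\rangle_{K_I}|\leq \|k_x^I\|_{K_I}\,\|hk_x^J\|_{K_I}\lesssim |I'(x)|^{1/2}|J'(x)|^{1/2}$, and since $k_x^J(x)\asymp|J'(x)|$ this is one of the two inequalities; applying the same to the inverse operator gives the other. In particular no interpolation between spectral points is needed --- the reproducing-kernel estimate is valid at \emph{every} $x\in\R$ because for MIFs everything is analytic across the line --- so the ``propagation'' difficulty you single out as the main obstacle is not actually present. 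Two smaller remarks: keep careful track of whether $h$ means $h(I,J)$ or $h(J,I)=1/h(I,J)$, since the two choices flip the sign in the exponent (your bookkeeping $|E_J/E_I|\asymp\exp\ti\phi$ is the reciprocal of the convention $h(I,J)=E_I/E_J$ used in the paper); and the detour through Lemma \ref{lemcomp} is unnecessary, as that lemma only treats twins ($\spec_I=\spec_J$) and does not produce the factor $\exp 2\ti\phi$.
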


\begin{proof}
By Theorem \ref{thmmult} multiplication by $h=h(J,I)$ is a bounded operator $K_J\to K_I$. Hence,
$$ |h(x)k^J_x(x)|=|<k^I_x,hk^J_x>_{K_I}|\leq ||k^I_x||_{K_I} \ ||hk^J_x||_{K_I}\leq
$$$$\leq C|I'(x)|^{1/2}||k^J_x||_{K_J}=
$$$$= C|I'(x)|^{1/2}|J'(x)|^{1/2}\leq C |J'(x)|\left(\frac {|I'|}{|J'|}\right)^{1/2}=C|k^J_x(x)|\left(\frac {|I'|}{|J'|}\right)^{1/2},$$
for all $x\in \R$, which implies one of the two estimates. Applying similar argument to the operator $K_I\to K_J$ we obtain the other.

\end{proof}

%\subsection{Conditions in terms of difference of arguments}

Further metric properties of $h$ give the following conditions.

\begin{theorem}
Let $I,J$ be MIFs, $\phi=\phi(I,J)$.

\ms\no If  the functions $|J'|^{1/2}\exp{(-\ti\phi)}$ and $|I'|^{1/2}\exp{(\ti\phi)}$ belong to $L^2(\R)$ then $I \tsim J$.

\ms\no If  $I\tsim J$ then $\ti\phi-\log(1+|x|)\in \textrm{Log} |H^2|$.

\end{theorem}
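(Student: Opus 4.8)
The plan is to prove the two implications separately. For the sufficiency I would show that each of the two $L^2$-hypotheses produces one of the inclusions $\DD(I)\subset\DD(J)$ and $\DD(J)\subset\DD(I)$, so that together they give $\DD(I)=\DD(J)$, i.e. $I\tsim J$. The mechanism is multiplication by the outer function $h=h(I,J)=\exp(\ti\phi-i\phi)$, exactly as in the remark preceding Example \ref{exone}, but now without assuming $\ti\phi$ bounded. First I record two facts: the pointwise bound $|f(x)|\le C\|f\|_{K_I}\,|I'(x)|^{1/2}$ for every $f\in K_I$ (from $\|k^I_x\|_{K_I}^2=|I'(x)|/2\pi$, as used in the proof of Lemma \ref{nec}), and the inclusion $N[\bar I L]\subset K_I$, which holds because $\bar I L f=\bar g$ with $g\in H^2$ forces $\bar I f=\overline{Lg}\in\overline{H^2}$.

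Now fix an inner $L\in\DD(I)$ and a nonzero $f\in N[\bar I L]$, and write $\bar I L f=\bar g$ with $g\in H^2$; since $\bar I L$ is unimodular, $|g|=|f|$ on $\R$. Using the boundary identity $\bar J I=\bar h/h$ (which is just $e^{i(\arg I-\arg J)}=e^{2i\phi}=\bar h/h$), I compute
$$\bar J L (hf)=(\bar J I)(\bar I L f)\,h=(\bar h/h)\,\bar g\,h=\overline{hg}.$$
Both $hf$ and $hg$ lie in the Smirnov class $\NN^+$, and on $\R$ their common modulus is $|hf|=|hg|=e^{\ti\phi}|f|\le C\|f\|_{K_I}\,|I'|^{1/2}e^{\ti\phi}$. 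Hence the hypothesis $|I'|^{1/2}e^{\ti\phi}\in L^2(\R)$ forces $hf,hg\in\NN^+\cap L^2=H^2$, so $hf\in N[\bar J L]$; as $h$ is outer, $hf\neq0$ and $L\in\DD(J)$. This gives $\DD(I)\subset\DD(J)$. Swapping the roles of $I$ and $J$ replaces $\phi,\ti\phi,h$ by $-\phi,-\ti\phi,1/h$ and turns the hypothesis into $|J'|^{1/2}e^{-\ti\phi}\in L^2(\R)$, yielding $\DD(J)\subset\DD(I)$; the two inclusions give $I\tsim J$.

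For the necessity I must produce from $I\tsim J$ a function $g\in H^2$ with $\log|g|=\ti\phi-\log(1+|x|)$; equivalently I must check $\ti\phi-\log(1+|x|)\in L^1_\Pi$ and $e^{\ti\phi}/(1+|x|)\in L^2(\R)$. The first is immediate: $\ti\phi\in L^1_\Pi$ because $I\tsim J$, and $\log(1+|x|)\in L^1_\Pi$. For the $L^2$-bound I would invoke Theorem \ref{thmmult}: since $I\tsim J$, multiplication by $h=h(I,J)$ is a bounded operator $K_I\to K_J$ (the companion of the operator $K_J\to K_I$ used in the proof of Lemma \ref{nec}). Apply it to the reproducing kernel $F=k^I_i\in K_I$ at the point $i$, so that $hF\in K_J\subset L^2(\R)$ and $\|hF\|_{L^2}<\infty$. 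The key elementary input is a lower bound for $|F|$ on $\R$: one has $|k^I_i(x)|=c_0\,|1-\overline{I(i)}I(x)|/|x+i|$ with $c_0>0$, and since $|I(i)|<1$ while $|I(x)|=1$ on $\R$, the numerator obeys $|1-\overline{I(i)}I(x)|\ge 1-|I(i)|>0$; hence $|k^I_i(x)|\ge c/(1+|x|)$. Therefore
$$\int_\R\frac{e^{2\ti\phi}}{(1+|x|)^2}\,dx\le c^{-2}\int_\R e^{2\ti\phi}|k^I_i|^2\,dx=c^{-2}\|hF\|_{L^2}^2<\infty,$$
which is exactly $e^{\ti\phi}/(1+|x|)\in L^2(\R)$. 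Since $(1+x^2)^{1/2}$ and $1+|x|$ differ by a bounded factor and $\textrm{Log}\,|H^2|$ is closed under adding bounded functions, this gives $\ti\phi-\log(1+|x|)\in\textrm{Log}\,|H^2|$.

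The routine parts are the Smirnov-class bookkeeping in the first half (verifying $hf\in N[\bar J L]$, not merely $hf\in L^2$) and the explicit reproducing-kernel modulus in the second. The real leverage in the necessity is Theorem \ref{thmmult}, which converts $I\tsim J$ into boundedness of the multiplier $h\colon K_I\to K_J$; the whole argument then hinges on pairing that boundedness with a single test function whose modulus is bounded below by $c/(1+|x|)$ on $\R$, and the point of choosing $k^I_i$ is precisely that its numerator $1-\overline{I(i)}I$ cannot vanish on the line. The main obstacle, therefore, is not any isolated estimate but making the transfer identity $\bar J L(hf)=\overline{hg}$ and the memberships $hf,hg\in H^2$ hold simultaneously, so that one genuinely lands inside the Toeplitz kernel rather than merely in $L^2$.
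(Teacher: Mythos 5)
Your proof is correct. The sufficiency half is essentially the paper's own argument: the paper likewise combines the pointwise bound $|f|\leq\|f\|_2|I'|^{1/2}$ on $K_I$ with the two $L^2$ hypotheses to conclude that multiplication by $h$ maps $K_I$ into $K_J$ and by $1/h$ maps $K_J$ into $K_I$, then invokes Theorem \ref{tto}; you run the same computation for a general symbol $\bar I L$ and get the inclusion $\DD(I)\subset\DD(J)$ directly, which is only a cosmetic difference. The necessity half is where you genuinely diverge. The paper first notes that two equivalent singular MIFs must be constant multiples of each other, so one of the functions, say $I$, has a zero $a$; then $I/b_a$ is a base element of $\DD(I)=\DD(J)$, hence a total element by Proposition \ref{pbt}, and Proposition \ref{p100} gives $\ti\phi(J,I/b_a)\in \textrm{Log}|H^2|$, which differs from $\ti\phi(I,J)-\log(1+|x|)$ by a bounded function. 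You instead test the bounded multiplier $h\colon K_I\to K_J$ of Theorem \ref{thmmult} against the single reproducing kernel $k^I_i$, whose modulus on $\R$ is bounded below by $c/(1+|x|)$ because $|1-\overline{I(i)}I(x)|\geq 1-|I(i)|>0$. Your route is shorter and bypasses the base/total-element machinery entirely, relying only on Theorem \ref{thmmult} (which the paper itself already invokes in the proof of Lemma \ref{nec}, so there is no circularity); the paper's route is structurally more informative, since it exhibits the $\log(1+|x|)$ correction as exactly the contribution of the removed Blaschke factor $b_a$. Both arguments are sound.
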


\begin{proof}
By Theorem \ref{tto}, $I \tsim J$ iff multiplication by $h(I,J)$ is a bounded invertible operator from $K_I$ to $K_J$.
Note that since every $f\in K_I$ satisfies $|f|\leq ||f||_2|I'|^{1/2}$ the conditions in the statement
imply that $hf\in H^2$ and therefore $hf\in K_J$. Similarly, for every $f\in K_J$, $f/h\in K_I$.

\ms\no Note that two singular MIFs cannot be equivalent unless they are constant multiples of each other. Hence,
if $I\tsim J$ then one of them, say $I$, has a zero. If $I(a)=0$ then $I/b_a$ is a base element of
$\DD(I)$, and therefore a base
 element of $\DD(J)$. By Proposition \ref{pbt},
it is a total element of $\DD(J)$ and by Proposition \ref{p100}, $\ti\phi(J,I/b_a)\in \textrm{Log} |H^2|$. It is left to notice
that $\ti\phi(J,I/b_a)\sim \ti\phi(I,J)-\log(1+|x|)$ as $x\to \pm\infty$.

\end{proof}

\ms\no The following question was suggested by the referee in relation to the above proof: Is it true for general singular inner functions that they are equivalent iff they are constant multiples of each other? We would like to leave this question for the reader.

\subsection{TE for functions with comparable derivatives}
The condition of comparability for the derivatives of the inner functions appearing in Section \ref{secTEvst} is worth exploring a bit further.
Such conditions appear in applications. For instance, inner functions corresponding to Schr\"odinger equations with regular potentials, as
well as to other similar classes of canonical systems, will satisfy this condition. Completeness problems for various families of special functions
also lead to MIFs with comparable derivatives, see \cite{MIF1}. Let us provide the following description of
Toeplitz equivalence pertaining to this case.

\begin{lemma} Consider two MIFs $I$ and $J$ such that $I'\asymp J' $ on $\R$. Then $I\tsim J$ iff $\phi(I,J)=\frac 12(\arg I -\arg J)$ has a bounded harmonic conjugate.
\end{lemma}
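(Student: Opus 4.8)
The plan is to treat the two implications separately, since they draw on different results already in hand and differ sharply in difficulty. The forward (``if'') implication turns out not to require the hypothesis $I'\asymp J'$ at all, whereas the reverse (``only if'') implication is exactly where comparability of the derivatives does the work.

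For the ``if'' direction I would suppose that $\phi=\phi(I,J)$ has a bounded harmonic conjugate, i.e. $\ti\phi$ is bounded on $\R$. Then $|h(I,J)|=\exp\ti\phi$ satisfies $0<c<|h|<C<\infty$, so $h$ and $1/h$ are both bounded. This is precisely the general sufficient condition recorded in Section \ref{secdif}: for any inner $L$ one has $f\in N[\bar I L]$ iff $hf\in N[\bar J L]$, since multiplication by the bounded, boundedly invertible outer function $h$ carries one kernel isomorphically onto the other. Hence $\DD(I)=\DD(J)$ and $I\tsim J$. I would simply invoke this, stressing that no assumption on the derivatives is used in this direction.

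For the ``only if'' direction I would route the argument through Lemma \ref{nec}. Assume $I\tsim J$. Lemma \ref{nec} gives, on all of $\R$,
$$\frac{|J'|}{|I'|}\exp(2\ti\phi)\asymp 1.$$
The hypothesis $I'\asymp J'$ says exactly that $|J'|/|I'|\asymp 1$, so dividing out this comparable factor leaves $\exp(2\ti\phi)\asymp 1$, i.e. there are constants $0<c\le C<\infty$ with $c\le \exp(2\ti\phi)\le C$ everywhere. Taking logarithms, $\ti\phi$ is bounded both above and below on $\R$, which is to say that $\phi$ has a bounded harmonic conjugate. This closes the equivalence.

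I do not expect a genuine obstacle, since Lemma \ref{nec} already packages the hard estimate (the two-sided reproducing-kernel norm comparison, which itself rests on boundedness of multiplication by $h$ in both directions). The only point needing care is to confirm that Lemma \ref{nec} yields a \emph{two-sided} bound on $\exp(2\ti\phi)$ rather than a one-sided one; this is guaranteed by the symmetric ``$\asymp$'' in its conclusion, and it is exactly what lets one extract boundedness of $\ti\phi$ from both above and below. I would also note that the orientation convention $\phi(I,J)=-\phi(J,I)$ is harmless: interchanging $I$ and $J$ simultaneously inverts $|J'|/|I'|$ and negates $\ti\phi$, so the conclusion that $\ti\phi$ is bounded is unaffected.
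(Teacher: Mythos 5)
Your proposal is correct and follows essentially the same route as the paper: the ``if'' direction is the general sufficient condition from Section \ref{secCon} (boundedness of $\ti\phi$ makes multiplication by $h$ an isomorphism of the kernels), and the ``only if'' direction combines Lemma \ref{nec} with $I'\asymp J'$ to force $\exp(2\ti\phi)\asymp 1$. The paper phrases the second step as a proof by contradiction, but the content is identical to your direct argument.
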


\begin{proof}
If $\ti\phi$ is bounded then $I\tsim J$, see Section \ref{secCon}. Assume now that $I\tsim J$ but $\ti\phi$ is unbounded.
This contradicts Lemma \ref{nec}.

\end{proof}

\section{TO in terms of model  and de Branges spaces}\label{secTOin}

\subsection{ $\DD(\theta)$ as inner factors in $K_\theta$ and $B(E)$}

In terms of the model space $K_\theta$, the set of dominance $\DD(\theta)$ has a natural meaning.
It is the set of all  inner components of functions from $K_\theta$.

\ms\no In case of MIFs, $K_\theta$ is directly related to the de Branges space $B(E)$ via
the isometric isomorphism $EK_\theta =B(E)$. Hence, $\DD(\theta)$ is also
the set of all  inner components of functions $f/E$, $f\in B(E)$, in the upper  half-plane.

\ms\no The set $\DD_T(\theta)$ can be similarly identified with the subset of all total inner components
of functions from $K_\theta$ or $B(E)$ as was discussed in Section \ref{secB}.

\ms\no If $\theta$ is a MIF and  $I\in \DD_B(\theta)$ then $I=B_\L S^a$ for some Blaschke sequence
$\L,\ \lan\to\infty$ and $a\geq 0$. In the case of pure Blaschke product, $a=0$, the sequence $\L$ satisfies $\L=\{f=0\}$ (with muliplicities)
for some function from $K_\theta$ (or $B(E)$). In the case $a>0$, for any M\"obius transform $\b_w$ of the unit disk, $\b_w(S^a)B_\L$ is a Blaschke product from $\DD_B(\theta)$.
Hence, $\L\cup \{\frac{2\pi n}a+iC\},\ \Re C>0,$ is again equal to $\{f=0\}$ for some $f\in K_\theta\ (B(E))$.

\ms\no We will return to the discussion of zero sets in Section \ref{secCom}.

\ms\no Sets of inner components of functions from $K_\theta$ have been studied
by other authors, see for instance \cite{Dya, BS}. As follows from our discussion above,  $I\in\DD(\theta)$ iff '$I$ lurks within $K_\theta$', using the terminology of \cite{BS}. 
In \cite{FHR} the authors study the set of multipliers $\MM(I,J)$ between model spaces $K_I$ and $K_J$, i.e., the set of
all $H^\infty$-functions $\phi$ such that $\phi K_I\subset K_J$. In relation to TO, $\MM(I,J)\neq \{0\}$ implies
$I\tleq J$. In these papers the reader may find additional properties of $\DD(\theta)$.

\subsection{Base elements and asymptotics along the imaginary axis}

\begin{lemma}\label{lemtot}
Let $\theta$ be a MIF, $f\in K_\theta$, $f(iy)\neq o(y^{-3/2})$ as $y\to\infty$.
Then the total inner component of $f$ is a base element of $\DD(\theta)$.

\end{lemma}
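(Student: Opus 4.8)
I would argue by contraposition: supposing the total inner component $\Theta=\Theta_f$ of $f$ is \emph{not} a base element of $\DD(\theta)$, I would show $f(iy)=o(y^{-3/2})$. The starting point is that $\Theta$ is automatically a total element (Section \ref{secB}): writing $f=I_ff_o$ with $I_f$ inner and $f_o$ outer, and $\bar\theta f=\bar g$ with $g=Jg_o\in K_\theta$, one has $|f_o|=|g_o|$, so after absorbing a unimodular constant $g_o=f_o$, and $\Theta=I_fJ$ with $\bar\theta\Theta f_o=\overline{f_o}$; thus $f_o\in N[\bar\theta\Theta]$ is purely outer. Since $\Theta$ is total but not base, it divides a strictly larger element of $\DD(\theta)$, and by Proposition \ref{p200} it divides a strictly larger \emph{total} element $\Theta'=\Theta\rho$ with $\rho$ a nonconstant inner function. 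Peeling off a single factor of $\rho$, I would reduce to the two model cases $\rho=b_c$ (a finite Blaschke factor) and $\rho=S^\e$ (a small exponential factor at infinity).

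The engine of the argument is a comparison between $f$ and a witness $u\in K_\theta$ realizing $\Theta'$ as its total inner component. For such $u$ there is a purely outer $u_o\in N[\bar\theta\Theta']$, i.e. $\bar\theta\Theta\rho\,u_o=\overline{u_o}$. Dividing this by $\bar\theta\Theta f_o=\overline{f_o}$ yields the identity
$$\rho\,\frac{u_o}{f_o}=\overline{\left(\frac{u_o}{f_o}\right)}\qquad\text{on }\R,$$
so that, with $w=u_o/f_o$, one has $\rho=\bar w/w$ and $|u_o|=|w|\,|f_o|$ on $\R$. In the language of Section \ref{secdif} this records that $\log|w|$ is, up to bounded terms, the harmonic conjugate of half the argument of $\rho$, so that the presence of the extra inner factor $\rho$ is exactly the statement $\tilde\phi(\theta,\Theta')\in\textrm{Log}|H^2|$ of Proposition \ref{p100}, with the difference $\tilde\phi(\theta,\Theta')-\tilde\phi(\theta,\Theta)$ accounting for $\rho$. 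The plan is then to feed this into the universal reproducing-kernel bound $|h(iy)|\le\|h\|\,(4\pi y)^{-1/2}$, valid for every $h\in H^2$: the factor $\rho$ forces $f_o$, and hence $f$, to carry one extra power of $y^{-1}$ of decay along the imaginary axis beyond this baseline $y^{-1/2}$, producing $f(iy)=o(y^{-3/2})$ and the desired contradiction. For the exponential case $\rho=S^\e$ this is transparent, since $|S^\e(iy)|=e^{-\e y}$ decays faster than any power; the finite case must be extracted from the imaginary-axis growth of $|w(iy)|$ dictated by $\tilde\phi$.

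I expect the finite Blaschke case to be the main obstacle, for two related reasons. First, an extra Blaschke factor $b_c$ satisfies $|b_c(iy)|\to1$, so it contributes no decay \emph{directly}; the decay must instead be read off from the growth of the outer function $w$ determined by $\rho$, which in turn hinges on the delicate behavior at infinity of the harmonic conjugate $\tilde\phi$ (the same $\log(1+|x|)$-type corrections that appear in Section \ref{secCon}). Getting the exponent exactly right—so that the single extra factor converts the baseline $y^{-1/2}$ into precisely $o(y^{-3/2})$—is the crux of the computation. Second, one must verify that non-maximality of a \emph{total} element can always be witnessed compatibly with this estimate; here I would establish and use that $\Theta$ is a base element precisely when the kernel $N[\bar\theta\Theta]$ reduces to the line spanned by the purely outer $f_o$, and show that any additional kernel element, paired with $f_o$ through the identity above, would again force $f(iy)=o(y^{-3/2})$.
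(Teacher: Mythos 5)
Your skeleton coincides with the paper's: assume the total inner component $\Theta$ is not a base element, use Proposition \ref{p200} to produce a witness whose total inner component is $\Theta'=\Theta\rho$ with $\rho$ nonconstant inner, form the outer ratio $w=u_o/f_o$ satisfying $\rho=\bar w/w$ on $\R$, and convert the presence of $\rho$ into extra decay of $f$ along $i\R_+$. But the step you yourself flag as ``the crux'' --- extracting the growth of $|w(iy)|$ from the identity $\rho=\bar w/w$ --- is exactly where your proposal stops, and it is the entire content of the lemma. The paper supplies it as Claim \ref{c1}: an outer function whose argument decreases by at least $\pi$ across $\R$ satisfies $y=O(h(iy))$. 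Since $\rho$ is a nonconstant inner function, $\arg\rho$ increases by at least $2\pi$ over $\R$ (already for a single Blaschke factor, and more for $S^\e$), so $\arg w=-\tfrac12\arg\rho+\const$ decreases by at least $\pi$ and $|w(iy)|\gtrsim y$. This is uniform: no case split between $b_c$ and $S^\e$ is needed, and none of the $\log(1+|x|)$ corrections you worry about arise --- those belong to Proposition \ref{p100} and Section \ref{secCon}, where one compares a witness against $\theta$ itself, whereas here one only compares the two witnesses against each other, so the exponent is clean. Your observation that $|b_c(iy)|\to1$ contributes no decay directly is correct, and the resolution is precisely that all the decay lives in $w$.

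Two smaller points. The big-$O$ reproducing-kernel bound $|u_o(iy)|\le\|u_o\|(4\pi y)^{-1/2}$ only yields $f(iy)=O(y^{-3/2})$, which does not contradict $f(iy)\neq o(y^{-3/2})$; you need the refinement $g(iy)=o(y^{-1/2})$ for $g\in H^2$ (immediate from the Fourier representation and dominated convergence), which together with $|w(iy)|\gtrsim y$ gives $f_o(iy)=u_o(iy)/w(iy)=o(y^{-3/2})$ and hence $|f(iy)|\le|f_o(iy)|=o(y^{-3/2})$, the desired contradiction. Finally, the program in your last paragraph --- characterizing base elements by one-dimensionality of $N[\bar\theta\Theta]$ --- is an unnecessary detour: non-maximality of $\Theta$ together with Proposition \ref{p200} hands you the witness directly, and the estimate above applies to any such witness.
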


\begin{proof}
Suppose that the total inner component $I$ of $f$ is not a base element.
Then there exists inner $J$ and outer $g$ such that $I$ properly divides $J$ and $Jg\in K_\theta$.
Let $h$ be
an outer component of $f$. Then the argument of the outer function $g/h$ is $-\frac 12\arg (J/I)$, i.e., it is
a continuous decreasing function on $\R$ which decreases by at least $\pi$. By Claim \ref{c1} below and the asymptotics of
$f$ this implies that $g(iy)\neq o(y^{-1/2})$ as $y\to\infty$. This contradicts $g\in H^2$.

\end{proof}

\ms\no The following can be easily established.

\begin{claim}\label{c1}
Let $h$ be an outer function in $\C_+$ whose argument $\psi$ on $\R$ satisfies
$$\liminf_{x\to -\infty} \psi(x) -\limsup_{x\to\infty}\psi(x)\geq \pi.$$
Then
$$y=O(h(iy))\textrm{ as }y\to\infty.$$

\end{claim}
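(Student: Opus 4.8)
The plan is to show that the hypothesis forces $h$ to grow at least linearly along the imaginary axis, i.e. $\log|h(iy)|\ge \log y - C$ for large $y$, which is exactly $y=O(h(iy))$. The mechanism I want to exploit is that a drop of the boundary argument of size at least $\pi$ concentrated ``at infinity'' behaves like an extra linear factor $z+i$: indeed $\arg(x+i)$ decreases from $\pi$ at $-\infty$ to $0$ at $+\infty$, a total drop of exactly $\pi$, while $|iy+i|\sim y$. So the natural model for the minimal admissible $h$ is the zero‑free outer function $z+i$, and the strategy is to divide it out and show that what remains cannot decay.

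\ms\no First I would reduce to a normalized statement by factoring out this model. Set $F=h/(z+i)$. Since $z+i$ is a zero‑free outer function in $\C_+$ and $h$ is outer, $F$ is again outer, and $|h(iy)|=(y+1)\,|F(iy)|$; hence it suffices to prove that $|F(iy)|$ stays bounded away from $0$ as $y\to\infty$. The boundary argument $\chi=\arg F$ equals $\psi-\arg(z+i)$, and because $\arg(x+i)\to\pi$ as $x\to-\infty$ and $\to 0$ as $x\to+\infty$ are genuine limits, the hypothesis transforms into
$$\liminf_{x\to-\infty}\chi(x)-\limsup_{x\to+\infty}\chi(x)=\Big(\liminf_{x\to-\infty}\psi(x)-\limsup_{x\to+\infty}\psi(x)\Big)-\pi\ \ge\ 0 .$$
So the task becomes: an outer $F$ whose boundary argument satisfies $\liminf_{-\infty}\chi\ge\limsup_{+\infty}\chi$ has $\liminf_{y\to\infty}|F(iy)|>0$.

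\ms\no Next I would prove this reduced statement using the Schwarz representation of the outer function. Writing $u=\log|F|$, outerness gives $\log F=\SS(u\,dt)+i\gamma$, so that
$$\log|F(iy)|=\frac1\pi\int_\R\frac{y\,u(t)}{t^2+y^2}\,dt,\qquad \chi=\gamma+\ti u ,$$
where $\ti u$ is the normalized conjugate function. The content of the hypothesis is thus a condition on the behaviour of $\ti u$ at $\pm\infty$, which I want to convert into a lower bound for the Poisson integral of $u$ at $iy$. The clean conceptual reason this works is obtained by pushing $\infty$ to a finite boundary point: under the automorphism $w=-1/z$ of $\C_+$ (which sends $iy\mapsto i/y$, the ray $x\to+\infty$ to $0^-$, and $x\to-\infty$ to $0^+$), the function $G(w)=\log F(-1/w)$ has one‑sided boundary values of $\Im G$ at $0$ that do not decrease as one crosses from the left to the right, i.e. $\Im G(0^+)\ge\Im G(0^-)$. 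The only logarithmic singularity of $G$ compatible with such a non‑negative jump of $\Im G$ keeps $\Re G(i/y)=\log|F(iy)|$ bounded below, which is exactly the assertion; a Phragmén–Lindel\"of / harmonic‑measure comparison at the boundary point $w=0$ makes this precise.

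\ms\no The hard part will be the last step: turning the qualitative ``no downward jump of the argument at infinity'' into the quantitative bound $\log|F(iy)|\ge -C$. The subtlety is that the argument of an outer function is neither bounded nor equal to the Poisson extension of its boundary values, so one cannot simply read off $\Re\log F$ on the imaginary axis from the behaviour of $\arg F$ at $\pm\infty$. I expect to handle this back in the original variables: fix $\e>0$, set $\alpha=\liminf_{-\infty}\psi$ and $\beta=\limsup_{+\infty}\psi$ (so $\alpha-\beta\ge\pi$), and choose $T$ so large that $\psi\ge\alpha-\e$ on $(-\infty,-T)$ and $\psi\le\beta+\e$ on $(T,\infty)$. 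Then I would estimate the Poisson integral defining $\log|h(iy)|$ against the corresponding integral for the model factor $z+i$, using the conjugacy between $u$ and $\psi$ to transfer the two one‑sided tail estimates on $\psi$ into the required logarithmic growth of the Poisson integral of $u$; letting $\e\to0$ should then yield $|h(iy)|\ge c\,y$ for all large $y$.
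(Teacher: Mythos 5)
Your reduction is correct and is the natural first move: dividing out the model factor $z+i$ turns the Claim into the statement that an outer function $F$ with $\liminf_{x\to-\infty}\arg F\geq\limsup_{x\to+\infty}\arg F$ satisfies $\liminf_{y\to\infty}|F(iy)|>0$. But that reduced statement is exactly where your text stops being a proof: both mechanisms you offer for it --- the Phragm\'en--Lindel\"of/harmonic-measure comparison at the pushed-in boundary point, and the ``transfer of the one-sided tail estimates on $\psi$ through the conjugacy'' --- are named rather than carried out, and you yourself flag the obstruction (the argument of an outer function is not the Poisson extension of anything you control, and one-sided bounds on the conjugate function on two rays do not directly bound the Poisson integral of $\log|h|$ on the imaginary axis). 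Since the paper states the Claim without proof, this missing step is the entire content of the problem. (For the record, the paper offers no argument to compare against.)

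Worse, the reduced statement you formulate is false, which is why it resists every soft argument. Take
$$F(z)=\frac{1}{\log(e^e-iz)}.$$
For $z\in\C_+$ one has $\Re(e^e-iz)\geq e^e$, so $F$ is bounded, analytic and zero-free in $\C_+$; since $\log|\log(e^e-iz)|$ is positive, continuous up to $\R$, Poisson-summable on $\R$ and $o(y)$ on the imaginary axis, $F$ is outer. On $\R$ the imaginary part of $\log(e^e-ix)$ stays in $(-\pi/2,\pi/2)$ while its real part tends to $+\infty$, so $\arg F(x)\to 0$ as $x\to\pm\infty$ and your hypothesis holds (with $0\geq 0$); yet $|F(iy)|=1/\log(e^e+y)\to 0$. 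Consequently $h=(z+i)F$ is outer with $\liminf_{x\to-\infty}\psi=\pi$ and $\limsup_{x\to+\infty}\psi=0$, so the Claim's hypothesis holds with equality, while $|h(iy)|=(y+1)/\log(e^e+y)$ and $y\neq O(h(iy))$. Thus the Claim fails at the endpoint gap $=\pi$: the hypothesis as written is consistent with $|h(iy)|\sim y/\log y$, and no completion of your outline can yield the stated conclusion. A correct version needs either a strict inequality (a gap $\pi+\delta$ does force $|h(iy)|\gtrsim y$, indeed $\gtrsim y^{1+\delta'}$), or the extra structure present where the Claim is invoked in Lemma \ref{lemtot}, namely that $\psi$ equals $-\frac12\arg\Theta$ for a non-constant inner $\Theta$, which determines the outer function up to a positive constant (e.g.\ $h=c(z-\bar a)$ when $\Theta=b_a$). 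Your proposal does not detect this and presents the reduced statement as something a comparison argument will deliver; it will not.
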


\ms\no Our next statement combined with Lemma \ref{lemtot} shows that functions whose total inner components
are base elements of $\DD(\theta)$ are dense in $K_\theta$.

\begin{proposition}\label{propass}
For every inner $\theta$, the space $K_\theta$ contains a dense subset of functions $f$ satisfying
$$|f(iy)|\sim \frac 1y\textrm{ as }y\to\infty.$$

\end{proposition}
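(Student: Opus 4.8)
The plan is to work with reproducing kernels of $K_\theta$ and to single out a dense subspace on which the behaviour of $f(iy)$ is controlled by one non-oscillating term. Assume $\theta$ is non-constant (otherwise $K_\theta=\{0\}$). Recall that the reproducing kernel of $K_\theta$ at $w\in\C_+$ is
$$k_w(z)=\frac{i}{2\pi}\,\frac{1-\overline{\theta(w)}\,\theta(z)}{z-\bar w},$$
and that finite linear combinations of the $k_w$ form a dense subspace $V\subset K_\theta$ (if $f\perp k_w$ for all $w$ then $f\equiv0$). First I would record the elementary asymptotics along the imaginary axis: since $y/(iy-\bar w)\to 1/i$, for $f=\sum_j c_j k_{w_j}\in V$ one gets
$$y\,f(iy)=\frac{1}{2\pi}\bigl(A_f-B_f\,\theta(iy)\bigr)+O(1/y),\qquad A_f:=\sum_j c_j,\quad B_f:=\sum_j c_j\,\overline{\theta(w_j)}.$$

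The main obstacle is that $\theta(iy)$ need not converge as $y\to\infty$; it may oscillate throughout $\overline{\D}$, so a generic combination yields at best $|f(iy)|\asymp 1/y$ with the genuine rate possibly failing. The key idea is to annihilate the oscillating term $B_f\,\theta(iy)$ by restricting to $\ker B$, where $B$ is regarded as a linear functional on $V$. I would show that $B$ is \emph{unbounded} on $V$: were it bounded, it would be represented by some $\beta\in K_\theta$ with $\overline{\beta(w)}=\langle k_w,\beta\rangle=\overline{\theta(w)}$ for every $w$, forcing $\beta\equiv\theta$; but $\theta\notin H^2$, since $\int_\R|\theta|^2=\int_\R 1=\infty$, a contradiction. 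Hence $\ker B$, being the kernel of a discontinuous functional on $V$, is dense in $V$ and therefore dense in $K_\theta$.

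On $\ker B$ the displayed formula collapses to the genuine limit $y\,f(iy)\to A_f/2\pi$. Because $\theta$ is non-constant, the functionals $A$ and $B$ are linearly independent on $V$ (otherwise $1=\lambda\,\overline{\theta(w)}$ for all $w$ would make $\theta$ constant), so $\ker B\not\subset\ker A$ and $A$ does not vanish identically on $\ker B$. Thus $\{f\in\ker B:\ A_f\neq0\}$ is the complement of a proper subspace of the dense space $\ker B$, hence itself dense in $K_\theta$. Every such $f$ satisfies $\lim_{y\to\infty}y\,f(iy)=A_f/2\pi\neq0$, i.e. $|f(iy)|\sim |A_f|/(2\pi y)$, decaying at the exact rate $1/y$ (the positive constant being inessential for the statement and for the application to Lemma \ref{lemtot}, which only needs $f(iy)\neq o(y^{-3/2})$). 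I expect the density of $\ker B$ — equivalently the unboundedness of the functional $B$ — to be the only genuinely nontrivial point; everything else is bookkeeping with reproducing kernels.
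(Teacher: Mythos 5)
Your proof takes a genuinely different route from the paper's and, modulo two small points noted below, it is sound. The paper moves to the disk by the Cayley transform and approximates $\Phi=\theta\circ C$ by divisors $\Phi_n\to\Phi$ that continue analytically through the boundary point $1$ (the image of $\infty$); density of $\bigcup K_{\Phi_n}$ in $K_\Phi$ then reduces everything to the observation that a dense subset of each $K_{\Phi_n}$ takes a finite non-zero value at $1$. You stay in the half-plane, expand $yf(iy)$ over finite kernel combinations as $\frac1{2\pi}\bigl(A_f-B_f\theta(iy)\bigr)+O(1/y)$, and eliminate the oscillating term by passing to $\ker B$, whose density you extract from the discontinuity of $B$ (correctly reduced to $\theta\notin H^2$). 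That is the genuinely non-trivial step and it works; your argument is arguably more self-contained, since the paper's proof quietly relies on the density of $\bigcup K_{\Phi_n}$ and on producing suitable divisors $\Phi_n$ even for purely singular $\theta$, neither of which is spelled out. Two things to patch: (i) $A$ and $B$ are well defined on $V$ only because reproducing kernels at distinct points are linearly independent; this holds whenever the number of points does not exceed $\dim K_\theta$, so it is automatic when $K_\theta$ is infinite-dimensional, but it should be said; (ii) when $\theta$ is a finite Blaschke product, $V=K_\theta$ is finite-dimensional, every functional on it is bounded, and $B$ is in fact not well defined, so the unboundedness argument does not apply --- but in that case $\theta(iy)$ converges to a unimodular constant $c$, the functional $f\mapsto\lim 2\pi yf(iy)=A_f-cB_f$ is well defined and non-zero (on $k_w$ it equals $1-c\,\overline{\theta(w)}\neq0$ since $|\theta(w)|<1$), and the complement of its kernel is the required dense set. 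With these two remarks inserted the proof is complete.
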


\begin{proof}
Let $C(z)$ be the Cayley transform from the unit disc to the upper half-plane. Then $\Phi(z)=\theta(C(z))$ is
an inner function in the unit disc. Recall that $K_\theta$ is obtained from $K_\Phi$ via the map
$f(z)\mapsto (C^{-1}(w)-1)f(C^{-1}(w))$. Now the statement is equivalent to the statement that functions with
finite non-zero limits $\lim_{r\to 1-}f(r)$ are dense in $K_\Phi$.

\ms\no Let $\Phi_n$ be a sequence of divisors of $\Phi$ such that  $\Phi_n\to\Phi$ point-wise in $\D$ and
each $\Phi_n$ can be analytically continued through $1$ . Then $\cup K_{\Phi_n}$ is dense
in $K_\Phi$. But in each $K_{\Phi_n}$ all functions can be continued through $1$ and a dense subset have non-zero values there.

\end{proof}

 %\section{TO and de Branges spaces}

\subsection{$\DD(\theta)$ in terms of de Branges spaces.}\label{sec3.1}

Let $E$ be a de Branges function and let $\theta=\theta_E$ be a corresponding MIF.
If $F\in B(E)$ then $F=I_1fE$ in $\C_+$, where $I_1$ is inner and  $f\in H^2$ is outer. Similarly,
in $\C_-$, $F=\bar I_2\bar f E^\#$. An important property of $B(E)$ is that the inner components
can be moved from one half-plane to the other, i.e., if $I_3I_4=I_1$ then the function $G$ defined
as $I_3fE$ in $\C_+$ and as $\bar I_4\bar I_2\bar fE^\#$ in $\C_-$ also belongs to $B(E)$. Similarly
one can move inner factors from $\C_-$ to $\C_+$.

\ms\no The set of all inner functions $I_1$ ($I_2$) appearing this way for a fixed $B(E)$ is exactly the dominance set $\DD(\theta)$.

\ms\no If $F$ is an entire function defined as above in $\C_\pm$, we will call the inner function $I_1I_2$ the total inner component of $F$.
If $I$ is a total inner component for a function from a de Branges space then the argument of $fE$ on $\R$ is determined by the argument
of $I$ up to $\pi n$. The argument of MIF $I$ is a real-analytic function on $\R$, while the argument of $fE$ is piece-wise real analytic, making
a jump of $-\pi$ at each real zero of $fE$. All in all we have
\begin{equation}\arg fE= \frac 12\arg I\ (\textrm{mod}\ \pi)\label{e3.1}\end{equation}

\ms\no Note that total inner components of functions from $B(E)$ are exactly the elements of $\DD_T(\theta_E)$.

\ms\no Denote by $\DD_T^*(\theta_E)$ the set of {\it exact total elements}, the total elements corresponding to functions from $B(E)$ which have no zeros on the real line.
If $f\in B(E)$ is such a function and $I\in \DD_T^*$ is its inner component in $\C_+$ then the last equation holds exactly, i.e.,
$\arg fE= \frac 12\arg I$ on $\R$ for a proper choice of arguments on both sides.

\subsection{dB spaces for inner Toeplitz kernels}\label{secdBs}

The following discussion will be used in Section \ref{secCom}.

\ms\no Let $I$ and $J$ be two MIFs such that $N[\bar I J]\neq 0$. Notice that
$$\bar I J f=\frac{E_I}{E_I^\#}\frac{E_J^\#}{E_J}f=\bar g$$
%iff
%$$ E_I E_J^\# f =\bar E_I E_J^\# \bar g,$$
which shows that an $H^2$-function $f$ belongs to $ N[\bar I J]$ iff $\frac{E_I}{ E_J} f$ can be continued to the lower half-plane as an entire function
(the formula for the continuation is $\frac{E_I^\#}{E_J^\#}\bar g$). Consider the space
of entire functions $B=\frac{E_I}{ E_J}N[\bar I J]$ equipped with the norm
$$||f E_I/E_J||=||f||_{H^2}.$$
By verifying the axioms one can conclude that $B=B(E)$ is a de Branges space for some HB function $E$.
We will denote this HB function by $E_{I,J}$.

\ms\no To summarize, to each pair of MIFs $I,J$ such that
$N[\bar I J]\neq 0$ there corresponds an HB function $E_{I,J}$. Our construction
implies the following important property:

\begin{proposition}\label{prop103}
The set $J\DD(\theta_{E_{I,J}})$ is the set of all functions from $\DD(I)$ divisible by $J$

\end{proposition}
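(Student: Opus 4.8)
The plan is to reduce the set identity to a single equivalence of Toeplitz kernels and then read it off from the de~Branges construction of $E_{I,J}$. Write $E=E_{I,J}$ and $\theta=\theta_E$. I claim it suffices to prove that for every inner function $M$,
\[
N[\bar I J M]\neq 0 \iff N[\bar\theta M]\neq 0 .
\]
Granting this, both inclusions follow at once: an element of $J\DD(\theta)$ has the form $JM$ with $M\in\DD(\theta)$, i.e. $N[\bar\theta M]\neq 0$, hence $N[\bar I\,JM]\neq 0$, so $JM\in\DD(I)$ and is visibly divisible by $J$; conversely any $L\in\DD(I)$ with $J\mid L$ can be written $L=JM$ with $M$ inner, and $N[\bar I L]=N[\bar I JM]\neq 0$ forces $N[\bar\theta M]\neq 0$, i.e. $M\in\DD(\theta)$ and $L\in J\DD(\theta)$.

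For the equivalence I would first record the elementary kernel identity: for any symbol $U$ and inner $M$, one has $g\in N[UM]$ iff $Mg\in N[U]$, so that $N[UM]=\{g\in H^2:\ Mg\in N[U]\}$. Applied with $U=\bar I J$ and with $U=\bar\theta$ (recall $N[\bar\theta]=K_\theta$), this turns the desired equivalence into the statement that $M$ divides the inner part in $\C_+$ of some nonzero element of $N[\bar I J]$ exactly when it divides the inner part of some nonzero element of $K_\theta$. Now invoke the construction of Section~\ref{secdBs}: the map $\tilde f\mapsto (E_I/E_J)\tilde f$ is an isometry of $N[\bar I J]$ onto $B(E)$, and $F\mapsto F/E$ is the standard isometry of $B(E)$ onto $K_\theta$. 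Hence multiplication by $w:=E_I/(E_J E)$ is an isometry of $N[\bar I J]$ \emph{onto} $K_\theta$. The whole problem therefore collapses to showing that multiplication by $w$ does not change the set of inner divisors realized in $\C_+$, i.e. that $w$ carries no nontrivial inner factor in $\C_+$.

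This is the heart of the matter. Since $E_I,E_J,E$ are Hermite--Biehler, they are zero-free in $\overline{\C_+}$, so $w$ is analytic and zero-free in $\C_+$; and the construction equates the de~Branges norm $\|F\|_E=\|F/E\|_{L^2}$ with $\|\tilde f\|_{H^2}$ for $F=(E_I/E_J)\tilde f$, which forces $|E|=|E_I/E_J|$, hence $|w|=1$, on $\R$. Consequently $\log|w|$ is harmonic in $\C_+$ with zero boundary values and at most linear growth (all three factors being of Cartwright class), so $\log|w(x+iy)|=cy$ and $w(z)=\gamma\,e^{-icz}$ in $\C_+$ for some real $c$ and unimodular $\gamma$. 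It remains to prove $c=0$, and this is the step I expect to be the main obstacle. I would argue it from surjectivity of the isometry: if $c<0$ then $w=\gamma S^{|c|}$ is inner in $\C_+$ and $w\,N[\bar I J]\subset S^{|c|}H^2$, which cannot exhaust $K_\theta$ because the reproducing kernels $k^\theta_\lambda$ are not divisible by $S^{|c|}$; if $c>0$ the same contradiction is obtained for the inverse isometry (multiplication by $\bar w=\bar\gamma S^{c}$) mapping $K_\theta$ onto $N[\bar I J]$. Equivalently, the identity theorem gives $E_I=\gamma e^{-icz}E_J E$ globally, whence $I=\kappa\,S^{2c}J\theta$ as inner functions, and one checks directly that $c\neq0$ makes $N[\bar I J]$ fail to be isometric to $K_\theta$ via $w$.

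With $c=0$, $w$ is a unimodular constant on $\C_+$, so for every $\tilde f$ the functions $\tilde f$ and $w\tilde f$ have the same inner component in $\C_+$. Therefore the inner divisors realized by $N[\bar I J]$ and by $K_\theta=w\,N[\bar I J]$ coincide; combined with the two kernel identities of the second paragraph this yields $N[\bar I JM]\neq 0\iff N[\bar\theta M]\neq 0$ for every inner $M$, and hence the proposition.
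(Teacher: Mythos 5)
Your reduction of the proposition to the single equivalence $N[\bar I J M]\neq 0 \Leftrightarrow N[\bar\theta M]\neq 0$, and the observation that everything hinges on the multiplication map $f\mapsto wf$, $w=E_I/(E_JE)$, carrying $N[\bar IJ]$ onto $K_\theta$ without creating or destroying inner factors, is the right frame; the paper itself gives no proof, presenting the statement as immediate from the construction of $E_{I,J}$, so you are filling in precisely the step that matters. The gap is in how you show $w$ contributes no inner factor. You claim that the norm identity $\|F/E\|_{L^2}=\|FE_J/E_I\|_{L^2}$ on $B(E)$ ``forces $|E|=|E_I/E_J|$ on $\R$.'' It does not: equality of two weighted $L^2$ norms on a proper subspace of $L^2$ does not give pointwise equality of the weights. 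Concretely, the structure function of a de Branges space is determined only up to a real $2\times 2$ matrix action on $(A,B)$ which preserves the Hilbert space but genuinely changes $|E|$ on $\R$; for instance $PW_\pi=B(e^{-i\pi z})=B\bigl(\cos\pi z-i(\sin\pi z+t\cos\pi z)\bigr)$ isometrically, and the second function does not have modulus $1$ on $\R$. Since Theorem 23 of \cite{dBr} only hands you \emph{some} HB function $E$, you may not assume $|w|=1$ on $\R$. The next step is also unavailable in this generality: to pass from ``$\log|w|$ harmonic with zero boundary values'' to $\log|w|=cy$ you invoke at most linear growth because ``all three factors are of Cartwright class,'' but $I$ and $J$ are arbitrary MIFs and $E_I,E_J,E$ need not have finite exponential type.

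What you actually need is weaker and can be proved directly: $w$ and $1/w$ are quotients of outer functions, so multiplication by $w$ preserves inner divisors. Indeed, $w=(wf_0)/f_0$ and $1/w=(g_0/w)/g_0$ for any nonzero $f_0\in N[\bar IJ]$ and $g_0\in K_\theta$, so both are zero-free functions of the Nevanlinna class in $\C_+$; write $w=S_1O/S_2$ with $S_1,S_2$ coprime singular inner and $O$ a quotient of outer functions. If $S_2$ is nonconstant, then $wf\in H^2\subset\NN^+$ for every $f\in N[\bar IJ]$ forces $S_2$ to divide the inner part of every such $f$; but $f\in N[\bar IJ]$ and $S_2\mid f$ imply $f/S_2\in N[\bar IJ]$ (since $\overline{S_2}=1/S_2$ a.e.\ on $\R$), so by iteration $S_2^n\mid f$ for all $n$ and $N[\bar IJ]=0$, a contradiction. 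The symmetric argument applied to $1/w$ and $K_\theta$ eliminates $S_1$. Substituting this for your $|w|=1$ and growth arguments closes the proof; your initial kernel identities and the final bookkeeping are fine.
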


\subsection{TE and equality of de Branges spaces}

While model spaces $K_\theta$ are equal as sets if and only if the corresponding inner functions
are equal up to a constant multiple, de Branges spaces $B(E)$ and $B(\ti E)$ can be equal as sets for two different functions $E$ and $\ti E$.
%Note that then the norms in $B(E)$ and $B(\ti E)$ are automatically equivalent.

\ms\no Equality of two de Branges spaces as sets of functions, with (possibly) different norms, is an important aspect of spectral
theory for differential equations. The so-called Gelfand-Levitan theory which treats spectral problems for regular Schr\"odinger
equations and Dirac systems utilizes the fact that the corresponding de Branges spaces are equal to Paley-Wiener spaces as sets.
This property becomes the key ingredient of the theory allowing one to use the structure of Paley-Wiener spaces to study relations
between the potential of the  differential operator and the Fourier transform of its spectral measure. An extension of
Gelfand-Levitan techniques to more general classes of Krein's canonical systems, see for instance \cite{Etudes}, requires further
understanding of properties of HB functions $E$ and $\ti E$ which produce equal, as sets, spaces $B(E)$ and $B(\ti E)$. Such
questions are also equivalent to problems on sampling measures, see Section \ref{secS}.

\ms\no Although total description of such pairs of HB functions presents
an important open problem, intuitively such functions must be similar to each other, which raises a natural question
on the correspondence of this  relation and Toeplitz equivalence for the MIFs $\theta$ and $\ti \theta$. Our next theorem
connects this problem to TO.

\ms\no We will use the notation $B(E)\seq B(\ti E)$ for the two de Branges spaces equal as sets.
Note that if $B(E)\seq B(\ti E)$ then  norms in the spaces are automatically equivalent.

\begin{theorem}\label{t500} Let $E$ and $\ti E$ be HB functions such that $E/\ti E\in \NN(\C_+)$. Then
$\theta\tsim\ti\theta$ for the corresponding MIFs  iff $B(E)\seq B(\ti E)$.

\ms\no Conversely,
if $\theta\tsim\ti\theta$ for two MIFs $\theta$ and $\ti \theta$ then the corresponding
HB functions can be chosen to satisfy $E/\ti E\in \NN(\C_+)$ and $B(E)\seq B(\ti E)$.
\end{theorem}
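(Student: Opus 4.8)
The plan is to translate everything into statements about multiplication operators between the model spaces $K_\theta$ and $K_{\ti\theta}$, using the isometry $B(E)=EK_\theta$ (the map $F\mapsto F/E$). Under this identification two facts are immediate. First, the identity map $B(E)\to B(\ti E)$ corresponds to multiplication by $E/\ti E$; hence $B(E)\seq B(\ti E)$ (equal as sets, with automatically equivalent norms) holds iff multiplication by $E/\ti E$ is a bounded invertible operator $K_\theta\to K_{\ti\theta}$. Second, by Theorem \ref{tto}, $\theta\tsim\ti\theta$ holds iff multiplication by $h=h(\theta,\ti\theta)$ is a bounded invertible operator $K_\theta\to K_{\ti\theta}$. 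The link between the two multipliers is the phase identity $\arg E=-\tfrac12\arg\theta$ on $\R$ (and likewise for $\ti E$), which gives $\arg(E/\ti E)=-\phi(\theta,\ti\theta)=\arg h$ on $\R$. Thus $r:=(E/\ti E)/h$ is analytic and zero-free in $\C_+$ and real on $\R$.

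For the first (iff) assertion I would isolate the following lemma as the heart of the matter: a real entire, zero-free function that belongs to $\NN(\C_+)$ is constant (via Krein's theorem, Hadamard factorization, and divergence of $\int|x|(1+x^2)^{-1}dx$). Granting this, assume $E/\ti E\in\NN(\C_+)$. If $\theta\tsim\ti\theta$, then $h\in\NN^+$ is a bounded invertible multiplier, so $r=(E/\ti E)/h\in\NN(\C_+)$; being real on $\R$ it extends by Schwarz reflection to a real entire zero-free function of bounded type, hence by the lemma $r$ is constant. Then $E/\ti E=\const\cdot h$ is itself a bounded invertible multiplier, so $B(E)\seq B(\ti E)$. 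Conversely, if $B(E)\seq B(\ti E)$ then $E/\ti E$ and $\ti E/E$ are both bounded multipliers into $H^2$; factoring $E/\ti E\in\NN(\C_+)$ into inner and outer parts, boundedness of multiplication by $E/\ti E$ rules out any singular inner factor in the denominator while boundedness of multiplication by $\ti E/E$ rules one out in the numerator, so $E/\ti E$ is outer. An outer multiplier preserves inner components of functions, and since $\DD(\theta)$ is exactly the set of inner components of elements of $K_\theta$, we obtain $\DD(\theta)=\DD(\ti\theta)$, i.e. $\theta\tsim\ti\theta$.

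For the converse assertion I would produce compatible HB representatives by re-norming. Fix any HB function $E_0$ with $\theta_{E_0}=\theta$. Since $\theta\tsim\ti\theta$, multiplication by $h$ is bounded invertible $K_\theta\to K_{\ti\theta}$, so $\|F\|_\ast:=\|hF/E_0\|_{L^2}$ is a norm on the set $S=B(E_0)$ equivalent to its original norm. The point is that $(S,\|\cdot\|_\ast)$ satisfies the de Branges axioms (A1)--(A3): membership under the difference quotient and under $F\mapsto F^\#$ is inherited from $B(E_0)$, while the required isometries hold because both operations multiply $F/(E_0/h)$ by a factor of modulus one on $\R$. Hence $(S,\|\cdot\|_\ast)=B(\ti E)$ for an HB function $\ti E$ with $|\ti E|=|E_0/h|$ on $\R$. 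Comparing the intrinsic isometry $F\mapsto F/\ti E$ onto $K_{\theta_{\ti E}}$ with the constructed isometry $F\mapsto hF/E_0$ onto $K_{\ti\theta}$ shows that multiplication by the unimodular-on-$\R$ function $v=E_0/(h\ti E)$ maps $K_{\ti\theta}$ onto $K_{\theta_{\ti E}}$; the same inner-factor forcing as above (applied to $v$ and $1/v$) makes $v$ constant. Therefore $\theta_{\ti E}=\ti\theta$ and $E_0/\ti E=\const\cdot h$ is outer, so $E_0/\ti E\in\NN(\C_+)$ and, by the forward direction, $B(E_0)\seq B(\ti E)$ --- which is also clear since $B(\ti E)=S=B(E_0)$ as sets.

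The main obstacle I expect is the forcing step used in two places: showing that a bounded invertible multiplier lying in $\NN$ (or unimodular on $\R$) between two model spaces cannot carry a nontrivial singular inner factor, so that it is in fact outer (respectively constant). Getting this cleanly, together with the elementary but essential lemma that a real entire zero-free function of bounded type is constant, is what makes the role of the hypothesis $E/\ti E\in\NN(\C_+)$ precise: it is exactly the condition that kills the real-entire ambiguity in the choice of the de Branges function $E$.
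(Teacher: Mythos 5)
Your easy direction ($B(E)\seq B(\ti E)\Rightarrow\theta\tsim\ti\theta$) is essentially the paper's, and your side remark that the Nevanlinna hypothesis is exactly what kills the real-entire ambiguity (a zero-free real entire function of bounded type is constant) is correct and worth making explicit. The problem is the hard direction. You derive it from Theorem \ref{tto}, taking as given that $\theta\tsim\ti\theta$ implies multiplication by $h(\theta,\ti\theta)$ is a bounded invertible operator $K_\theta\to K_{\ti\theta}$. But in this paper the MIF case of Theorem \ref{tto} is itself proved by writing $h=E_I/E_J$ and invoking ``the equivalence of $I\tsim J$ and $B(E_I)\seq B(E_J)$'' --- that is, by invoking Theorem \ref{t500}. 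Your argument is therefore circular: the implication you use as a black box carries the entire analytic content of the theorem, and nothing in the proposal supplies an independent proof of it. The same circularity enters your construction of compatible HB representatives in the second half, which again starts from ``multiplication by $h$ is bounded invertible.''

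What is needed, and what the paper actually does, is a direct argument that $\DD(\theta)=\DD(\ti\theta)$ forces $B(E)$ and $B(\ti E)$ to coincide as sets. The mechanism is: a common base element is the total inner component of some $F\in B(E)$ and some $G\in B(\ti E)$; the quotient $F/G$ is then a zero-free entire function (a real zero or a surviving inner factor would contradict baseness), outer in both half-planes, hence constant; so the two spaces share every function whose total inner component is a base element. By Lemma \ref{lemtot} this includes every $F$ with $(F/E)(iy)\neq o(y^{-3/2})$, and by Proposition \ref{propass} such functions are dense; adding one of them to a putative $F\in B(E)\setminus B(\ti E)$ (which necessarily satisfies $(F/E)(iy)=o(y^{-3/2})$) produces a function lying in both spaces and yields a contradiction. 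If you want to keep your multiplier-based outline you must first establish this implication (or an equivalent form of it) by some such direct means; as written, the proposal has a genuine gap.
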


\begin{proof} Suppose first that $B(E)\seq B(\ti E)$. Since $\DD(\theta)$ and $\DD(\ti\theta)$ are the sets of  inner
components of $F/E$ for the elements $F$ of the corresponding space in $\C_+$, $\DD(\theta)=\DD(\ti\theta)$
and $\theta\tsim\ti\theta$.

\ms\no Conversely, let $\DD(\theta)=\DD(\ti\theta)$. Then the subsets of base elements, $\DD_B$, coincide as well.
If $I\in\DD_B(\theta)=\DD_B(\ti\theta)$ then $I$ is a total inner component for some $F\in B(E)$ and for some $G\in B(\ti E)$.
Note that then $F/G$ is a zero-free entire function. Indeed, since the total zero components of $F$ and $G$ coincide, $F/G$ may
only have zeros on the real line. Then $F$ has zeros on the real line, say at $a\in\R$. But then $(z-i)\frac F{z-a}$ is an element
of $B(E)$ with total inner component $b_iI$ which contradicts the property that $I$ is a base element. Hence $F/G$ is zero-free.
It must be outer in both half-planes because otherwise $I$ is not a base element in
one of the $\DD$ sets. Hence, $F/G=const$. We obtain that the sets of functions in $B(E)$ and $B(\ti E)$ whose total inner components
are base elements coincide.

\ms\no Let now $F\in B(E)\setminus B(\ti E)$. By Proposition \ref{propass}, there exists $H\in B(E)$ such that $(H/E)(iy)\neq o(y^{-3/2})$ as $y\to\infty$.
By Lemma \ref{lemtot}, the total inner component of $H$ is a base element and therefore $H\in B(\ti E)$ by the argument above.

\ms\no Notice that $(F/E)(iy)=o(y^{-3/2})$, because otherwise its total inner component would have been a base element which would imply $F\in B(\ti E)$.
Hence, $(F+ H)/E\neq o(y^{-3/2})$, which implies that $F+ H$ has a base total inner component and therefore belongs to both de Branges spaces.
Since $H$ also belongs to both spaces, so does $F$ and we arrive at a contradiction.
\end{proof}

\ms\no In the process of the last proof we have established the following useful property.

\begin{proposition}
If the total inner component $I$ of a function $F$ from $B(E)$ is a base element of $\DD(\theta_E)$ then $F$ has no
real zeros. Equivalently, $\DD_B(\theta)\subset \DD_T^*(\theta)$ for any MIF $\theta$.
\end{proposition}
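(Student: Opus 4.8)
The plan is to argue by contradiction, reusing the construction that already appeared inside the proof of Theorem \ref{t500}. Suppose $F\in B(E)$ has total inner component $I$ and, contrary to the claim, vanishes at some real point $a$. Since $F$ is entire, $a$ is a genuine zero of positive integer order, and the idea is to manufacture from $F$ a new element of $B(E)$ whose total inner component strictly contains $I$; this will contradict the assumption that $I$ is a base element of $\DD(\theta_E)$, since base elements are the maximal elements with respect to division.

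The candidate is $G(z)=(z-i)F(z)/(z-a)$. First I would check that $G\in B(E)$, i.e., that $G/E$ and $G^\#/E$ lie in $H^2(\C_+)$; note $G^\#(z)=(z+i)F^\#(z)/(z-a)$ because $a$ is real. The only possible obstructions are at $a$ and at infinity. Near $a$ the pole of $1/(z-a)$ is compensated by the honest zero of $F$ (and of $F^\#$, which also vanishes at $a$ since $a$ is real), so $G/E$ and $G^\#/E$ stay bounded there; at infinity the multipliers $(z\mp i)/(z-a)$ tend to $1$, so $G/E$ and $G^\#/E$ have the same $L^2$ behaviour on $\R$ as $F/E$ and $F^\#/E$. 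As $E$ is an HB function it has no zeros in $\overline{\C_+}$, so $G/E$ and $G^\#/E$ are analytic in $\C_+$ with square-integrable boundary values, hence belong to $H^2(\C_+)$, giving $G\in B(E)$.

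Next I would identify the total inner component of $G$. Dividing out the real zero at $a$ changes only the outer part of $F$ in each half-plane and leaves the inner components untouched, while the factor $z-i$ contributes the Blaschke factor $b_i$ to the inner component in $\C_+$. Hence the total inner component of $G$ is $b_i I$, which lies in $\DD_T(\theta_E)\subset\DD(\theta_E)$ and is properly divisible by $I$. Thus $I$ divides another element of $\DD(\theta_E)$, contradicting the fact that $I$ is a base element; therefore $F$ has no real zeros.

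For the equivalent reformulation I would combine this with Proposition \ref{pbt}. If $I\in\DD_B(\theta)$, then $I\in\DD_T(\theta)$, so $I$ is the total inner component of some $F\in B(E_\theta)$; by the first part $F$ has no real zeros, whence $I\in\DD_T^*(\theta)$ by the definition of exact total elements. The only step that needs genuine care is the membership verification $G\in B(E)$ near the real zero $a$ and the bookkeeping that the factor $z-i$ produces exactly the extra inner factor $b_i$; the rest is routine once those are in hand.
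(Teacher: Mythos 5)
Your argument is correct and is essentially the paper's own proof: the Proposition is extracted from the proof of Theorem \ref{t500}, where the author uses exactly the same function $(z-i)F/(z-a)$ to produce an element of $B(E)$ with total inner component $b_iI$, contradicting maximality of the base element $I$. You merely supply the routine membership and bookkeeping details that the paper leaves implicit.
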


\subsection{TE and model spaces}

In this section we formulate our result for general inner functions. In order to do that we will need to extend the notations
$\phi(I,J)$ and $h(I,J)$ introduced in Section \ref{secdif} from the case of MIFs to the general case.

\ms\no To make sense of the definition $\phi(I,J)=\frac 12(\arg I - \arg J)$ in the general case we  understand $\arg I \ (J)$ as
a measurable function on $\R$ such that $I/e^{i\arg I}$ is positive a.e. on $\R$. It is not difficult to show that if $I\tsim J$
then their arguments can be chosen in such a way that $\ti\phi$ exists and  $h(I,J)=e^{\ti\phi-i\phi}$ is an outer function.
In what follows we will assume that $\phi$ and $h$ correspond to the arguments of $I$ and $J$ chosen in such a way.

\ms\no Our statement in this section is

\begin{theorem}\label{tto}\label{thmmult}
$I\tsim J$ iff multiplication by $h(I,J)$ is a bounded and invertible operator $K_I\to K_J$.

\end{theorem}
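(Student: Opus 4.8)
The plan is to build everything on the boundary identity $\bar I J = h/\bar h$ on $\R$, which follows at once from $|h|=e^{\ti\phi}$, $h/|h|=e^{-i\phi}$ and $\phi=\frac12(\arg I-\arg J)$. Equivalently $\bar J=\bar I\,\bar h/h$, so for every inner $L$ one has the formal intertwining $\bar J L(hf)=\bar h\,(\bar I L f)$; thus multiplication by $h$ formally carries $N[\bar I L]$ to $N[\bar J L]$ and multiplication by $1/h=h(J,I)$ carries it back. The only obstruction to making this rigorous is the pair of integrability conditions (that $hf$ and its companion stay in $H^2$), and the whole theorem amounts to showing these hold exactly when $I\tsim J$.

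For the easy direction I would assume $M_h\colon K_I\to K_J$ is bounded and invertible. Since $h$ is outer, so is $1/h$, and multiplication by an outer function preserves the inner component in $\C_+$: if $f=I_f f_o$ then $hf=I_f(hf_o)$ with $hf_o$ outer. As $M_h$ is a bijection of $K_I$ onto $K_J$, the two spaces carry the same collection of inner components of their elements. Because $\DD(\theta)$ is precisely the set of inner components in $\C_+$ of functions in $K_\theta$ (Section \ref{secTOin}), this forces $\DD(I)=\DD(J)$, i.e. $I\tsim J$.

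For the converse I would first reduce boundedness and invertibility to a single inclusion. Using the conjugation $C_I f=I\bar f$, which maps $K_I$ isometrically onto $K_I$, the identity $\bar J(hf)=\bar h\,\overline{C_I f}=\overline{h\,C_I f}$ shows that $hK_I\subset H^2$ already forces $hK_I\subset K_J$. Closedness of $M_h$ (convergence in $H^2$ gives pointwise convergence in $\C_+$, which pins down the limit) then yields boundedness via the closed graph theorem; applying the same to $h(J,I)=1/h$ together with $J\tsim I$ produces a bounded $M_{1/h}\colon K_J\to K_I$, the two-sided inverse. So everything comes down to proving $hK_I\subset H^2$ under the hypothesis $I\tsim J$.

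To prove that inclusion, fix $f=I_f f_o\in K_I$ and let $T$ be its total inner component (combining both half-planes), so $T\in\DD_T(I)$. By Corollary \ref{cor200}, $\DD_T(I)=\DD_T(J)$, hence there is $G=I_G G_o\in K_J$ with the \emph{same} total inner component $T$. The decisive computation is that on $\R$ the outer factors have arguments $\arg f_o=\tfrac12(\arg I-\arg T)$ and $\arg G_o=\tfrac12(\arg J-\arg T)$, each up to an additive constant, so that
$$\arg(hf_o)-\arg G_o=-\phi+\tfrac12(\arg I-\arg T)-\tfrac12(\arg J-\arg T)=-\phi+\phi=\const.$$
Since $hf_o$ and $G_o$ are outer, equality of arguments modulo constants forces proportional moduli, whence $|hf|=|hf_o|\asymp|G_o|=|G|\in L^2(\R)$. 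As $hf$ lies in the Smirnov class $\NN^+(\C_+)$, Smirnov's theorem gives $hf\in H^2$, completing the crux. I expect this last inclusion to be the main obstacle: the subtle points are the transfer of the \emph{total} element $T$ into $K_J$ (the input $\DD_T(I)=\DD_T(J)$) and the recognition that the outer factors are governed by the total, not one-sided, inner component, which is exactly what makes $\phi$ and $\arg T$ cancel and upgrades the mere $L^2$-membership of $|G|$ to $hf\in H^2$.
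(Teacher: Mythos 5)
Your first direction (bounded invertible $M_h$ with $h$ outer preserves inner components, hence $\DD(I)=\DD(J)$) is exactly the paper's argument, and your reduction of the converse to the single inclusion $hK_I\subset H^2$ via the conjugation and the closed graph theorem is sound. But the crux step fails. The claim that two outer functions whose boundary arguments agree up to an additive constant must have comparable moduli is false. Note first that the identity $\bar\theta T f_o^2=\const\cdot|f_o|^2$ only pins down $\arg f_o$ modulo $\pi$, not $2\pi$ (this is \eqref{e3.1} in the paper). A quotient of outer functions that is real a.e.\ on $\R$ need not be constant: take $u=\frac 1{z+i}$ and $v=\frac z{z+i}$, both outer, with $\arg u-\arg v=-\arg z\in\{0,-\pi\}$ on $\R$, yet $|u|/|v|=1/|x|$ is neither bounded above nor below. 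So from $\arg(hf_o)=\arg G_o+\const\ (\textrm{mod}\ \pi)$ you cannot conclude $|hf_o|\asymp|G_o|$, and the desired membership $hf\in H^2$ does not follow. This is precisely the hard point of the theorem: in the paper it is handled inside the proof of Theorem \ref{t500} by working with \emph{base} elements rather than total ones (maximality rules out the extra real zeros / sign changes responsible for the counterexample above), by using that in the MIF case the quotient $F/G$ is \emph{entire} and zero-free and hence constant, and then by the density and asymptotic arguments of Proposition \ref{propass} and Lemma \ref{lemtot} to pass from functions with base total inner components to all of $B(E)$.

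There is also a circularity in your input data. You invoke Corollary \ref{cor200} ($I\tsim J\Rightarrow\DD_T(I)=\DD_T(J)$), but in the paper that corollary is \emph{deduced from} Theorem \ref{t500}, which is the MIF form of the very statement you are proving ($I\tsim J\Leftrightarrow B(E_I)\seq B(E_J)$, equivalently $M_h\colon K_I\to K_J$ bounded and invertible). The implication from equality of dominance sets to equality of the sets of total elements is not elementary -- the paper explicitly flags that its proof goes through Theorem \ref{t500} -- so you would need an independent argument for it before it can serve as the engine of your proof. Finally, even in your intended endgame you only get membership of $hf$ in the Smirnov class plus an $L^2$ bound on the modulus, which is fine for Smirnov's theorem, but the two issues above mean the $L^2$ bound itself is not established.
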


\ms\no For general $I$ and $J$ this means that  if $I\tsim J$ then their arguments can be chosen so that the outer function $h(I,J)$ exists and multiplication by $h(I,J)$ is a bounded and invertible operator $K_I\to K_J$. Conversely, if the arguments can be chosen in such way, then $I\tsim J$.

\begin{proof}
Suppose first that $I$ and $J$ are MIFs. Then $h=E_I/E_J$ and the equivalence of $I\tsim J$ and $B(E_I)\seq B(E_J)$ gives the statement.

\ms\no In the general case, if multiplication by $h(I,J)$ is a bounded and invertible operator $K_I\to K_J$
then the sets of all inner components of functions from $K_I$ and $K_J$ coincide because $h$ is outer. Hence $\DD(I)=\DD(J)$ and $I\tsim J$.
In the opposite direction, if $I\tsim J$ one can reduce the proof to the case of MIFs via a limiting argument. We leave the details to the reader.
\end{proof}

\subsection{Complete and minimal sequences of reproducing kernels and zero sets in de Branges spaces}\label{secCom}\label{secC}

\ms\no We call a sequence $\L\subset\C$  a zero set of a de Branges space $B(E)$ iff there exists $f\in B(E),\ f\not\equiv 0,$ such that $f=0$ on
$\L$ (with multiplicities). We call $\L$ an exact zero set if there exists $f\in B(E)$ such that $\{f=0\}=\L$ (with multiplicities). 
A maximal zero set  is a sequence $\L$ of points such that there exists
a non-zero function from the space vanishing on $\L$, but there is no such function for any set properly containing $\L$.

\ms\no A maximal zero set is exact but not vice versa. Blaschke products corresponding to maximal zero sets are base elements from $\DD(\theta)$
and those corresponding to exact zero sets are total elements, see Lemma \ref{lem200} below. Maximal zero sets are also related to complete
and minimal sequences.

\ms\no We say that $\L$ is a complete
and minimal sequence for $B(E)$ iff the system of reproducing kernels $\{k_\l\}_{\l\in\L}$ is complete and minimal
(i.e., any proper subsequence is incomplete) in $B(E)$. Note that a sequence is complete and minimal iff the same sequence minus any one of its points
is a maximal zero set.

\ms\no For sequences $\L\in\C_+$ similar definitions can be given for the model spaces $K_\theta$.

\ms\no We will now establish relations between zero sets and the subsets of the dominance set.

\ms\no Recall that, as was defined in Section \ref{secMod}, the spectrum $\spec_I$ of a MIF $I$ is the sequence of points from $\hat \R$ where the function is
equal to 1.
If $\L\subset \C\setminus \R$ is a sequence of complex points we denote by $B_\L$ the Blaschke product with zeros
at the points of $\L$ in $\C_+$ and at the points conjugate to the points of $\L$ in $\C_-$, assuming the Blaschke condition holds.

\begin{lemma}\label{lem200} Let $E$ be an HB function, $\theta=\theta_E$. Let $\L\subset \C\setminus \R$ and $\G\subset\R$ be sequences of points.

\ms\no 1) $\L\cup \G$ is a zero set of $B(E)$ iff there exists an inner $I$ such that $\spec_I=\G$ and $B_\L I\in \DD(\theta)$;

\ms\no 2) $\L\cup \G$ is an exact zero set of $B(E)$ iff there exists an inner $I$ such that $\spec_I=\G$ and $B_\L I\in \DD_T^*(\theta)$;

\ms\no 3) $\L\cup \G$ is a maximal zero set of $B(E)$ iff there exists an inner $I$ such that $\spec_I=\G$ and $B_\L I\in \DD_B(\theta)$.

\end{lemma}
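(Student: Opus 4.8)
The plan is to translate the three statements about zero sets into statements about inner components of functions in $B(E)$, and then to reduce all of them to a single ``bridge'' between the real zeros of a function in $B(E)$ and the value-$1$ set (spectrum) of an auxiliary meromorphic inner function. Recall from Section \ref{sec3.1} that $\DD(\theta)$ is exactly the set of $\C_+$-inner components of functions $F\in B(E)$, that $\DD_T(\theta)$ is the set of their total inner components, that $\DD_T^*(\theta)$ consists of the total inner components of those $F$ having no real zeros, and that inner factors may be freely transported between $\C_+$ and $\C_-$ without leaving $B(E)$. Writing $F=I_1 fE$ in $\C_+$ with $f$ outer, the complex zeros of $F$ in $\C_+$ are carried by $I_1$, the complex zeros in $\C_-$ by $I_2$ after reflection, and the real zeros of $F$ are carried by the outer factor $f$, producing the $-\pi$ jumps of $\arg(fE)$ in \eqref{e3.1}.

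For the $\C_-$-part of $\L$ I would first dispose of the half-plane bookkeeping: since $B_\L$ by definition places the points of $\L\cap\C_-$ at their conjugates in $\C_+$, transporting the corresponding inner factor to the lower half-plane (as in Section \ref{sec3.1}) turns a function vanishing on $(\L\cap\C_+)\cup\overline{\L\cap\C_-}$ in $\C_+$ into a function of $B(E)$ vanishing on all of $\L$. Thus the genuine content is the interplay between the real set $\G$ and $\spec_I$. The key observation is a phase identity: a function of $B(E)$ with real zeros exactly $\G$ has outer-part argument equal to the continuous increasing phase of a meromorphic inner function modulo the $-\pi$ jumps at $\G$, whereas a meromorphic inner function $I$ with $\{I=1\}=\G$ is precisely one whose halved argument $\frac12\arg I$ increases through the integer multiples of $\pi$ exactly at $\G$. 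Matching these two descriptions --- replacing the jumps at $\G$ by the smooth passage of $\frac12\arg I$ through the multiples of $\pi$ --- is exactly the passage between a realizing function with real zeros at $\G$ and one whose inner factor records $\G$ as its value-$1$ set. I would make this rigorous through the argument characterizations of Proposition \ref{p100} and its corollary, so that membership $B_\L I\in\DD(\theta)$ is read off from $\phi(\theta,B_\L I)$, together with the construction recalled at the end of Section \ref{secMod} of a meromorphic inner function with a prescribed value-$1$ set. Part 1) then follows: $\L\cup\G$ is a zero set iff some $F\in B(E)$ vanishes on it, iff, after transport and the bridge, there is an inner $I$ with $\spec_I=\G$ and $B_\L I\in\DD(\theta)$, the last equivalence using Proposition \ref{p200} that $\DD(\theta)$ is the set of divisors of its total elements.

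For part 2), exactness forces the realizing data to be sharp: $\L\cup\G$ is an exact zero set iff some $F\in B(E)$ has zero set equal to $\L\cup\G$ with no additional zeros. On the inner side this means the complex zeros are captured by $B_\L$ with no extra factor and the real zeros are exactly $\G$; passing through the bridge, all of $\G$ is absorbed into the spectrum of $I$ and the corresponding realizing function becomes zero-free on $\R$. This is precisely the content of $\DD_T^*(\theta)$, and I would invoke the exact form of \eqref{e3.1}, in which $\arg(fE)=\frac12\arg I$ holds with no jumps, to match the two. For part 3), I would use that base elements are by definition the maximal elements of $\DD(\theta)$ under division, while maximal zero sets are those admitting no proper extension. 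Enlarging a zero set by one point corresponds, via part 1) and the bridge, to multiplying $B_\L I$ by a proper inner factor still lying in $\DD(\theta)$: a new Blaschke factor for an added complex zero, or an enlargement of $\{I=1\}$ from $\G$ to $\G\cup\{t\}$ for an added real point. Hence $\L\cup\G$ is maximal iff $B_\L I$ is maximal under division, i.e. a base element; Lemma \ref{lemtot} and Proposition \ref{propass} supply the analytic input, through the asymptotics along the positive imaginary axis, linking base total components to the non-existence of proper vanishing extensions.

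The main obstacle is the bridge in quantitative form: converting the $-\pi$ phase jumps at $\G$ into the continuous passage of $\frac12\arg I$ through the multiples of $\pi$, while (a) guaranteeing that the auxiliary $I$ can be chosen inner and meromorphic with $\{I=1\}=\G$ and with $B_\L I$ of controlled size so that it genuinely lands in $\DD(\theta)$, which is where Proposition \ref{p100} and the underlying Poisson and Blaschke convergence conditions must be checked, and (b) tracking multiplicities and stray zeros precisely enough to separate the three cases. In particular, the claim in part 3) that enlarging $\{I=1\}$ corresponds to honest division of inner functions, and the claim in part 2) that no spurious zeros appear under the exact phase identity, are the delicate points requiring the most care.
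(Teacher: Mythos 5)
Your overall translation of the three statements into statements about inner components is the right frame, and you have correctly located the crux: the ``bridge'' between a real zero set $\G$ of a function in $B(E)$ and an inner function $I$ with $\spec_I=\G$ sitting inside the total inner component. But the proposal never actually builds that bridge, and the phase-matching scheme you offer in its place does not, as stated, produce one. Prescribing that the outer part's argument should differ from $\frac12\arg I$ by the removal of the $-\pi$ jumps at $\G$ does not by itself yield an element of $B(E)$: an outer function is determined by its modulus, not its argument, and the real issue is whether the modified function still lies in $H^2$ after the modification. Likewise, routing the verification through Proposition \ref{p100} only shifts the problem to checking that a certain harmonic conjugate lies in $\textrm{Log}|H^2|$, which requires exactly the quantitative control you defer to ``the underlying Poisson and Blaschke convergence conditions must be checked.'' You name this as the main obstacle but leave it unresolved, so the proof is incomplete at its central step.

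The paper resolves it with a concrete device that is absent from your proposal. Given $F\in B(E)$ vanishing on $\G$, one chooses a finite positive measure $\mu=\sum a_n\delta_{\gamma_n}$ on $\G$ with point masses $a_n$ small enough that $(F/E)K\mu\in H^2$; letting $I=I_\mu$ be the inner function with Clark measure $\mu$ (so $\spec_I=\G$ automatically), one gets $G=F/(1-I)\in B(E)$, and the identity $\bar\theta_E(G/E)=\bar h\,\bar I/(1-\bar I)$ on $\R$ shows that $I$ divides the inner component of $G$ in $\C_-$ while $B_\L$ divides it in $\C_+$, so $B_\L I$ divides the total inner component. The converse is an elementary subtraction: if $B(E)$ contains a function equal to $B_\L IfE$ in $\C_+$ it also contains one equal to $B_\L fE$, and their difference $B_\L(1-I)fE$ vanishes on $\L\cup\G$. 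Note that $\arg(1-I)$ is precisely the phase correction your heuristic describes, so your intuition is sound; what is missing is the realization of that correction by the explicit factor $1-I_\mu$ with the small-mass condition guaranteeing membership in the space. Without it, parts 2) and 3) (where you must additionally rule out stray zeros and match maximality under division with maximality of zero sets) cannot be carried out either.
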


\begin{proof}

1) Suppose that $F=0$ on $\L\cup \G$ for some $F\in B(E)$. Then there exists a finite positive measure $\mu$ concentrated on $\G$,
$$\mu=\sum a_n\delta_{\gamma_n},$$
such that $a_n>0$ are small enough to satisfy $(F/E)K\mu\in H^2$. Then for $I=I_\mu$ (the inner function whose Clark measure is $\mu$) we have
$(F/(E(1-I))\in H^2$ and $G=F/(1-I)\in B(E)$. 
For the function $G/E\in K_{\theta_E}$ we have
$$\bar \theta_E \frac GE=\bar\theta_E\frac FE\frac 1{1-I}= \bar h \frac {\bar I}{1-\bar I}$$
a.e. on $\R$ for some $h\in H^2,\ \bar h=\bar\theta_E F/G$. Here we use the fact that $F/E\in  K_{\theta_E}$ and the observation that $\bar I(1-I)/(1-\bar I)>0$ a.e. on $\R$.
Since $F$ vanishes on $\L$, the inner component of $G/E$ is divisible by $B_\L$. According to the last equation,
the inner component of $\theta_E\bar G/\bar E$ is divisible by $I$. Hence the total inner component of $G/E$ is divisible by $B_\L I$.

\ms\no Conversely, let $B_\L I\in \DD(\theta)$ for some inner $I$ such that $\spec_I=\G$. Then $B(E)$ contains a function
equal to $B_\L I f E$ in $\C_+$, where $f$ is outer from $H^2$. Then $B(E)$ also contains a function
equal to $B_\L  f E$ in $\C_+$. Subtracting we obtain a function in $B(E)$
equal to $B_\L(1- I) f E$ in $\C_+$, which vanishes on $\L\cup \G$.

\ms\no 2) and 3) can be proved similarly

\end{proof}

\begin{theorem}\label{t200} Every  element of $\DD(\theta)$ is a divisor of a base element.
\end{theorem}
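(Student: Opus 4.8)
The plan is to localize the problem to the multiples of a given element and then invoke the density results of the previous subsections. I will treat $\theta$ as a MIF, so that every element of $\DD(\theta)$ is again a MIF; the general inner case then follows by the same approximation argument used elsewhere in the paper.

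Fix $I\in\DD(\theta)$, so that $N[\bar\theta I]\neq 0$ and the construction of Section~\ref{secdBs} applies to the pair $(\theta,I)$, yielding the HB function $E_{\theta,I}$ and the MIF $\theta':=\theta_{E_{\theta,I}}$. First I would apply Proposition~\ref{prop103}: the set of elements of $\DD(\theta)$ divisible by $I$ is exactly $I\,\DD(\theta')$. The map $M\mapsto IM$ is an order isomorphism of $\DD(\theta')$, under the order by division, onto this set -- it is plainly monotone and injective, and $IM_1\mid IM_2$ forces $M_1\mid M_2$ after cancelling the inner factor $I$.

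Next I would reduce the theorem to the existence of a single base element of $\DD(\theta')$. Suppose $B'\in\DD_B(\theta')$ and set $B:=IB'$. If $B$ divided some $L\in\DD(\theta)$ with $L/B$ nonconstant, then $I\mid B\mid L$ would give $L=IL'$ with $L'\in\DD(\theta')$ and $B'\mid L'$ properly, contradicting the maximality of $B'$. Hence $B$ is maximal under division in all of $\DD(\theta)$, i.e.\ a base element, and it is divisible by $I$ -- exactly what the theorem asserts.

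Finally, the existence of the base element $B'$ is supplied directly by the earlier results. If $\theta'$ is nonconstant, Proposition~\ref{propass} produces a nonzero $f\in K_{\theta'}$ with $|f(iy)|\sim 1/y$, hence $f(iy)\neq o(y^{-3/2})$, and Lemma~\ref{lemtot} (which applies because $\theta'$ is a MIF) identifies the total inner component of $f$ as a base element of $\DD(\theta')$; if $\theta'$ is constant then the only element of $\DD(\theta)$ divisible by $I$ is $I$ itself, so $I$ is already a base element. I expect the only delicate points to be purely organizational: checking that Proposition~\ref{prop103} is genuinely applicable to $(\theta,I)$ and that ``maximal among the multiples of $I$'' coincides with ``maximal in $\DD(\theta)$''. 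All the real analysis -- constructing a function with $1/y$ decay whose total inner component is maximal -- is already packaged into Proposition~\ref{propass} and Lemma~\ref{lemtot}, so the argument requires no new estimates.
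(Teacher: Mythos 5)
Your proof is correct and is essentially the paper's own argument: both localize to the multiples of $I$ inside $\DD(\theta)$ via the auxiliary de Branges space $B(E_{\theta,I})$ of Section \ref{secdBs} and Proposition \ref{prop103}, and then lift a maximal (under division) element of the auxiliary dominance set back to a base element of $\DD(\theta)$. The only, harmless, divergence is in how that maximal element is produced --- the paper takes a maximal zero set of $B(E_{\theta,I})$ coming from an orthogonal basis of reproducing kernels minus a point, while you extract a base element of $\DD(\theta_{E_{\theta,I}})$ from Proposition \ref{propass} and Lemma \ref{lemtot}.
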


\ms\no  Before we prove the last statement let us note that each de Branges space
possesses a large collection of maximal zero sets (complete and minimal sequences, minus one point). For instance, if one takes an orthogonal basis of reproducing kernels described in Section \ref{secE} and deletes one point
from the corresponding sequence, the remaining sequence is a maximal zero set. By 'perturbing' this real sequence one can obtain a maximal zero set
in $\C_\pm$. Note that maximal zero sets $\L$ in $\C_+$, as any zero sets of a de Branges space $B(E)$ in $\C_+$, satisfy the Blaschke condition. The corresponding Blaschke products $B_\L$ are exactly the base elements of $\DD(\theta_E)$ which have no singular divisor.

\begin{proof}[Proof of Theorem \ref{t200}] First let us assume that $\theta$ is a MIF and let $J\in\DD(\theta)$.
Let $\L\subset \C_+$ be a  maximal zero set of $B(E_{\theta,J})$ as defined in Section \ref{secdBs}. Then $JB_\L$ is a base element of $\DD(\theta)$.
Indeed, if $b_aJB_\L\in \DD(\theta)$ for some Blaschke factor $b_a,\ a\in\C_+$ then by Proposition \ref{prop103},
$\L\cup \{a\}$ is a zero set of $B(E_{I,J})$, which contradicts maximality of $\L$. Hence $J$ divides a
base element of $\DD(I)$.

\ms\no Finally, in the case of non-MIF $\theta$, notice that $\theta$ is a normal limit of MIFs and apply a limiting argument.

\end{proof}

\ms\no Considering the case when $J$ is a Blaschke product in the statement of Theorem \ref{t200} and using Lemma \ref{lem200},
we obtain the following result by Yu. Belov. In fact, our last proof is  a variation of the proof in \cite{Belov}.

\begin{corollary}[\cite{Belov}]
Any incomplete sequence of reproducing kernels in a de Branges space is contained in a complete and minimal sequence
of reproducing kernels.
\end{corollary}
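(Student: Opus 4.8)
The plan is to read the completeness of a reproducing-kernel system purely in the language of zero sets and dominance sets, push it through Theorem \ref{t200}, and read the conclusion back. Fix $B(E)$ with $\theta=\theta_E$ and an incomplete sequence $\{k_\l\}_{\l\in\L_0}$. I would first note that incompleteness is exactly the statement that $\L_0$ is a zero set of $B(E)$: there is a nonzero $F\in B(E)$ with $F\perp k_\l$ for all $\l\in\L_0$, i.e. $F$ vanishes on $\L_0$ to the prescribed multiplicities. Treating first the case emphasized in the text, $\L_0\subset\C\sm\R$, Lemma \ref{lem200}(1) (with empty real part) turns this into $B_{\L_0}\in\DD(\theta)$.

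Next I would apply Theorem \ref{t200}: since $B_{\L_0}\in\DD(\theta)$, it divides a base element $\ti J\in\DD_B(\theta)$. Collecting the off-real zeros of $\ti J$ into $B_{\ti\L}$ and writing $\ti J=B_{\ti\L}\,\ti I$ with $\spec_{\ti I}\subset\R$, Lemma \ref{lem200}(3) identifies $M=\ti\L\cup\spec_{\ti I}$ as a maximal zero set of $B(E)$. As $B_{\L_0}$ is a Blaschke product dividing $\ti J$, its zeros lie among the off-real zeros of $\ti J$, so $\L_0\subset\ti\L\subset M$. Thus any incomplete $\L_0$ is contained in a maximal zero set $M$.

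It remains to pass from $M$ to a complete and minimal sequence, and this is where I expect the only real work to lie. I would use the correspondence recorded in Section \ref{secCom}: a sequence is complete and minimal precisely when deleting any one of its points leaves a maximal zero set, so every maximal zero set is a complete and minimal sequence with a single point removed. Granting this, there is a complete and minimal $\L$ with $M=\L\sm\{q\}$, and then $\L_0\subset M\subset\L$ proves the corollary. To make that correspondence precise one chooses $q$ outside the zero set of the (by maximality essentially unique) generator $g_M$ of the one-dimensional space $\{G\in B(E):G|_M=0\}$ and verifies that each single-point deletion of $\L=M\cup\{q\}$ is again a maximal zero set; this is the delicate step and uses the one-dimensionality together with the de Branges division axiom (A1) to transfer the free point among the admissible locations. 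The remaining bookkeeping — multiplicities, and the treatment of a general $\L_0$ containing real points, where $B_{\L_0}$ is replaced by $B_{\L_0'}I_0$ with $\spec_{I_0}=\G_0$ as in Lemma \ref{lem200} — is routine; the substantive content is Theorem \ref{t200}.
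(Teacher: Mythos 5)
Your proposal follows the same route as the paper: translate incompleteness of $\{k_\l\}_{\l\in\L_0}$ into $\L_0$ being a zero set, pass to $\DD(\theta)$ via Lemma \ref{lem200}, invoke Theorem \ref{t200} to embed the corresponding inner function into a base element, and read that back as containment in a maximal zero set, which becomes a complete and minimal sequence after adjoining one point. The paper compresses all of this into a single sentence (and records the "maximal zero set $=$ complete and minimal sequence minus a point" correspondence as a standing remark in Section \ref{secCom}), so your write-up is simply a correct expansion of the intended argument.
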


\ms\no (Note that any incomplete sequence of reproducing kernels is automatically minimal, which is used implicitly in the above statement.)

\ms\no Denote by $\ZZ(B(E))$ the collection of all zero sets for the space $B(E)$ and let $\ZZ_e$ stand for the exact zero sets.
Then Theorem \ref{t200} becomes the following statement.

\begin{theorem} The collection of zero sets $\ZZ(B(E))$ determines the space $B(E)$ uniquely within the regularity class of $E$, i.e., if $\ZZ(B(E))=\ZZ(B(\ti E))$
and  $E/\ti E\in \NN(\C_+)$
then $B(E)\seq B(\ti E)$.

\end{theorem}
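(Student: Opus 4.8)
The plan is to reduce the statement to Theorem \ref{t500}. Writing $\theta=\theta_E$ and $\ti\theta=\theta_{\ti E}$, and recalling that $E/\ti E\in\NN(\C_+)$ is assumed, Theorem \ref{t500} tells us that $B(E)\seq B(\ti E)$ holds as soon as $\theta\tsim\ti\theta$, i.e. as soon as $\DD(\theta)=\DD(\ti\theta)$. So it suffices to extract the equality of the two dominance sets from the equality $\ZZ(B(E))=\ZZ(B(\ti E))$.

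First I would pass to base elements. By Theorem \ref{t200} every element of $\DD(\theta)$ divides a base element, and $\DD(\theta)$ contains all divisors of its elements; the same holds for $\ti\theta$. Hence $\DD(\theta)=\DD(\ti\theta)$ will follow once $\DD_B(\theta)=\DD_B(\ti\theta)$. By Lemma \ref{lem200}(3) the base elements of $\DD(\theta)$ are precisely the inner functions $B_\L I$, with $\spec_I=\G$, for which $\L\cup\G$ is a maximal zero set of $B(E)$, and likewise for $\ti\theta$ and $B(\ti E)$. As maximal zero sets are the maximal elements of the collection $\ZZ$ ordered by inclusion, the hypothesis $\ZZ(B(E))=\ZZ(B(\ti E))$ gives at once that $B(E)$ and $B(\ti E)$ have exactly the same maximal zero sets. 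This already matches the base elements of the two dominance sets up to their spectra.

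The heart of the argument is to upgrade this to an exact identification. Fix a common maximal zero set $Z=\L\cup\G$ and choose $F\in B(E)$ and $\ti F\in B(\ti E)$ whose zero sets are exactly $Z$; these exist because $Z$ is maximal in each space. Then $F/\ti F$ is a zero-free entire function, and I would show, in the spirit of the proof of Theorem \ref{t500}, that it is constant. Since $F$ and $\ti F$ are de Branges functions, $\log|F/\ti F|\in L^1_\Pi$, so $F/\ti F$ carries no singular inner factor on $\R$; being entire it also carries no singular factor at a finite real point; thus the only possible nontrivial inner factor is an exponential $S^\alpha=e^{i\alpha z}$. Once this exponential factor is excluded, $F/\ti F$ is a zero-free function of bounded type with no inner factor in either half-plane, hence a constant. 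Consequently $F$ and $\ti F$ share the same total inner component, so the base element $B_\L I\in\DD_B(\theta)$ attached to $Z$ equals the base element $B_\L\ti I\in\DD_B(\ti\theta)$. Running this over all maximal zero sets yields $\DD_B(\theta)=\DD_B(\ti\theta)$, and the theorem follows.

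The main obstacle is exactly the exclusion of the exponential factor, i.e. showing that the two mean types of $F/\ti F$ vanish. Lemma \ref{lem200} records only the spectrum of the inner factor of a base element, so the matching of maximal zero sets pins down base elements only up to a factor with the same spectrum; here, because the ratio is entire, this indeterminacy is purely an exponential-type (mass at infinity) ambiguity rather than the finite-point derivative ambiguity of Lemma \ref{lemcomp}. To kill it I would combine two facts: the de Branges memberships $F\in B(E)$ and $\ti F\in B(\ti E)$ bound the mean types of $F$ and $\ti F$ by those of $E$ and $\ti E$, and the regularity hypothesis $E/\ti E\in\NN(\C_+)$ forces the corresponding types of $E$ and $\ti E$ to agree (equivalently, the common maximal zero set $Z$ has a single density, which must equal the type of each space). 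This is the step where the assumption $E/\ti E\in\NN(\C_+)$ and the restriction to the regularity class of $E$ are indispensable, and it is carried out just as the constancy of the ratio in the proof of Theorem \ref{t500}.
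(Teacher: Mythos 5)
Your overall strategy (reduce to Theorem \ref{t500} by showing $\DD(\theta_E)=\DD(\theta_{\ti E})$) matches the paper's, but you route the argument through maximal zero sets and base elements via Lemma \ref{lem200}(3) and Theorem \ref{t200}, whereas the paper works with the full collection $\ZZ$ and Lemma \ref{lem200}(1): the sets $\L\setminus\R$, $\L\in\ZZ(B(E))$, determine exactly which Blaschke products lie in $\DD(\theta)$, and the exponential parts are then recovered by the Frostman-shift observation that $BS^a\in\DD(\theta)$ if and only if the Blaschke products $B\,\b_w(S^a)$ lie in $\DD(\theta)$ for $w\in\D$. That observation is precisely what disposes of the ambiguity you correctly identify as "the main obstacle," and it is the step your proof replaces with an argument that does not work.

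The gap is in your exclusion of the exponential factor of $F/\ti F$. You claim that the hypothesis $E/\ti E\in\NN(\C_+)$ "forces the corresponding types of $E$ and $\ti E$ to agree." This is false: take $E=e^{-iz}$ and $\ti E=e^{-2iz}$; then $E/\ti E=e^{iz}\in H^\infty\subset\NN(\C_+)$, yet the mean types differ (of course $\ZZ(PW_1)\neq\ZZ(PW_2)$, but that only shows the theorem's hypothesis fails here, not that your inference is valid). The regularity assumption is a one-sided Nevanlinna condition and carries no information about equality of types; nor does a maximal zero set of a general de Branges space carry a "density" equal to the type of the space (for general $B(E)$ there is no exponential type at all, and even in the Paley--Wiener case such a statement is Beurling--Malliavin--level, not a formality). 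Without this step you only know $F/\ti F=Ce^{icz}$ for some real $c$, i.e., the base elements of $\DD(\theta)$ and $\DD(\ti\theta)$ attached to a common maximal zero set are matched only up to an exponential factor, which is not enough to conclude $\DD_B(\theta)=\DD_B(\ti\theta)$. Note also that the analogous step in the proof of Theorem \ref{t500} is not available to you: there one starts from a \emph{common} base element $I\in\DD_B(\theta)=\DD_B(\ti\theta)$, so a nontrivial exponential factor in $F/G$ would contradict maximality of $I$ in one of the two dominance sets; in your situation the equality of the two base elements is exactly what is to be proved. To repair the argument you would need to detect the exponential part of a base element from zero-set data alone, which is what the paper's Frostman-shift step accomplishes. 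The remaining parts of your outline (maximal zero sets are the inclusion-maximal elements of $\ZZ$; $\DD(\theta)$ is the set of divisors of $\DD_B(\theta)$ by Theorem \ref{t200}; $F/\ti F$ is zero-free, entire, of bounded type in both half-planes, hence $Ce^{icz}$) are sound.
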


\begin{proof}
The collection of sets $\L\setminus\R,\ \L\in \ZZ(B(E))$ determines the set of Blaschke products from $\DD(\theta), \ \theta=\theta_E$.
For the non-Blaschke elements we have the simple observation that whenever
$BS^a \in \DD(\theta)$, the Blaschke product $B {\textbf b}_w(S^a)$ belongs to $\DD(\theta)$ as well for all $w\in\D$, which implies that the sets $\L\setminus\R,\ \L\in \ZZ_e(B(E))$ determine $\DD(\theta)$ uniquely. It follows that $\ZZ_e(B(E))$ determines $\DD(\theta)$ and the statement follows from Theorem \ref{t200}.

\end{proof}

\ms\no Note that since $\ZZ(B(E))$ determines $B(E)$, it also determines $\ZZ_e(B(E))$. Conversely, since zero sets are subsets of exact zero sets, $\ZZ_e(B(E))$
determines $\ZZ(B(E))$. Even easier one can establish the same connection between $\ZZ$ and $\ZZ_m$, the collection of maximal zero sets, as
each maximal zero set is a maximal element of $\ZZ$ with respect to inclusion. Hence
either of the sets $\ZZ_e$ or $\ZZ_m$ can be substituted into the last statement instead of $\ZZ$. However, the statement with $\ZZ(B(E))$ is the strongest of the three.

\subsection{TO and inclusion of de Branges spaces}

\ms\no The last statement raises a natural question: if TE is equivalent to equality of the corresponding de Branges spaces,
are the relations $\tleq$ and $ \tgeq$ equivalent to inclusions of the spaces? If the answer were positive we would obtain
an equivalent definition of TO.

\ms\no The relation does hold in one direction:

\begin{proposition}\label{chain}
If $B(E)\subset B(\ti E)$  then $\theta\tleq\ti\theta$.

\end{proposition}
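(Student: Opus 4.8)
The plan is to run everything through the description of the dominance set recalled in Section~\ref{secTOin}: for a MIF $\theta=\theta_E$, the set $\DD(\theta)$ is exactly the collection of inner components in $\C_+$ of the functions $F/E$ with $F\in B(E)$, and (as noted in Section~\ref{secB}) this collection is closed under division by inner functions. So it suffices to fix $I\in\DD(\theta)$, choose $F\in B(E)$ with $\text{inn}(F/E)=I$, and show that $I$ divides the inner component of $F/\ti E$. Since $B(E)\subset B(\ti E)$ forces $F\in B(\ti E)$, we have $\text{inn}(F/\ti E)\in\DD(\ti\theta)$, and division--closedness of $\DD(\ti\theta)$ then gives $I\in\DD(\ti\theta)$. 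As $I$ was arbitrary this yields $\DD(\theta)\subset\DD(\ti\theta)$, i.e.\ $\theta\tleq\ti\theta$.

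The comparison of $\text{inn}(F/E)$ and $\text{inn}(F/\ti E)$ I would carry out in two parts. First, the Blaschke parts: both $E$ and $\ti E$ are HB functions, hence zero--free in $\C_+$, so the zeros of $F$ in $\C_+$ coincide with the zeros of $F/E$ and with the zeros of $F/\ti E$. Thus the Blaschke factors of $\text{inn}(F/E)$ and $\text{inn}(F/\ti E)$ are the \emph{same} product $B_\L$, $\L=\{F=0\}\cap\C_+$. Since $F$ is entire and $\ti E$ meromorphic, $F/\ti E$ is meromorphic and carries no singular inner factor on $\R$, so, exactly as for $F/E$, one gets $\text{inn}(F/\ti E)=B_\L S^{a'}$ for some $a'\ge 0$; only the exponents at infinity remain to be compared. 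Writing $I=B_\L S^{a}$, the mean type of $F/E$ along the imaginary axis is $-a$, while that of $F/\ti E$ is $-a+(\tau_E-\tau_{\ti E})$, because for an HB function the mean type in $\C_+$ equals its exponential type (the inequality $\tau_+\ge\tau_-$ coming from $|E(iy)|>|E(-iy)|$). Hence $a'=a+(\tau_{\ti E}-\tau_E)$, and I need $\tau_E\le\tau_{\ti E}$. This in turn follows from the inclusion: a reproducing kernel $k^E_\l\in B(E)$ has exponential type exactly $\tau_E$, and, being an element of $B(\ti E)$ by hypothesis, has type at most $\tau_{\ti E}$ (every function of a de Branges space has type no larger than its generating function, Section~\ref{dBrSpacesSection}). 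Therefore $a'\ge a$, so $S^a\mid S^{a'}$ and $I=B_\L S^a$ divides $\text{inn}(F/\ti E)$, which is what was needed.

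The one genuinely quantitative point, and the step I expect to be the main obstacle, is the control of the point mass at infinity: the Blaschke parts match for free, but the $S^a$--factors require the type inequality $\tau_E\le\tau_{\ti E}$ together with the identification of the mean type of an HB function with its exponential type. I would also remark that the MIF hypothesis forces $E$ and $\ti E$ to be of finite exponential type, so these type considerations are well posed; the reverse implication fails in general (that is why the statement is one--directional), and for a general, non--MIF $\theta$ one would deduce the result by approximating $\theta$ and $\ti\theta$ by MIFs and passing to the limit, as elsewhere in the paper.
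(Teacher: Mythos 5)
Your overall route coincides with the paper's: the paper disposes of this proposition in one sentence, by the remark that $\DD(\theta)$ is the set of inner components in $\C_+$ of $F/E$, $F\in B(E)$. Your decision to actually verify that the inner factor of $F/E$ divides that of $F/\ti E$, and your identification of the point mass at infinity as the only nontrivial issue (the Blaschke parts matching because $E,\ti E$ are zero-free in $\C_+$), is a genuine improvement in rigor over the one-line argument in the text.

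The flaw is in your justification of $\tau_E\le\tau_{\ti E}$. The parenthetical claim that the MIF hypothesis forces $E$ and $\ti E$ to have finite exponential type is false: $E=e^{-iz}e^{z^2}$ is an HB function of order $2$ whose associated inner function is the MIF $S^2$, because the zero-free real factor $e^{z^2}$ cancels in $E^\#/E$. For such $E$ the assertion that $k^E_\l$ has exponential type exactly $\tau_E$ is meaningless, and the step fails; even for $E$ of exponential type one should add ``for generic $\l$,'' since cancellation in $E(z)\overline{E(\l)}-E^\#(z)E(\bar\l)$ is conceivable. The repair keeps your idea but replaces exponential type by the Smirnov class. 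Note that
$k^E_\l/E=\overline{E(\l)}\,\bigl(1-\overline{\theta(\l)}\theta(z)\bigr)\big/\bigl(2\pi i(\bar\l-z)\bigr)$
is an \emph{outer} function, while $k^E_\l/\ti E\in H^2\subset N^+(\C_+)$ because $k^E_\l\in B(E)\subset B(\ti E)$; hence $E/\ti E=\bigl(k^E_\l/\ti E\bigr)\big/\bigl(k^E_\l/E\bigr)$ belongs to $N^+(\C_+)$. Being zero-free in $\C_+$, it factors as a singular inner function times an outer function, so for every $F\in B(E)$ the function $F/\ti E=(F/E)\cdot(E/\ti E)$ has inner factor divisible by that of $F/E$. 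This gives $\DD(\theta)\subset\DD(\ti\theta)$, i.e.\ $\theta\tleq\ti\theta$, with no growth hypotheses, and it subsumes both your Blaschke comparison and your $S^a$ comparison (equivalently, one can run your mean-type computation with the Nevanlinna mean type along $i\R_+$ in place of exponential type). Your closing remark about approximating non-MIF $\theta$ is moot here: $E$ and $\ti E$ are entire, so $\theta$ and $\ti\theta$ are automatically meromorphic.
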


\ms\no The statement follows from the fact that the corresponding dominance set consists of all inner components of
$F/E, \ F\in B(E)$ ($F/\ti E,\ F\in B(\ti E)$) in $\C_+$.

\ms\no However, as  shown by the example below, the opposite direction fails.

\begin{example}\label{echain}
Consider a sequence $\lan=(2^{|n|}\sign\ n+\frac 12)\pi+\e_{|n|} i,\ n\in\Z$, where $\e_n\downarrow 0$, and the corresponding Blaschke product $B_\L$.
Denote $I=B_\L S$ (where, once again, $S(z)=e^{iz}$) and consider the corresponding Cartwright HB function $E_I$.

\ms\no Let $s(z)=\sin z/z$ be the sinc function. Then, if $\e_n$ decays to 0 fast enough, $s/E_I\not\in L^2$. Hence $s\not \in B(E_I)$
and $B(E_S)\not\subset B(E_I)$ because $s\in B(E_S)=PW_1$.
At the same time, since $S$ is a divisor of $I$, $S\tl I$.
\end{example}

\subsection{Chains of  de Branges spaces and chains in TO}

\ms\no Recall that for a partial order a chain is a subset where every pair of elements is comparable. On the other hand,
every de Branges space, or every Poisson finite measure on the real line, gives rise to a chain of de Branges
spaces of entire functions, see Section \ref{secE}. Although the term 'chain' is given different meanings in these two situations,
we note the following simple connections between de Branges chains and chains in Toeplitz order.

\ms\no It follows from Proposition \ref{chain} that de Branges chains produce chains in Toeplitz order: if $\{B(E_t)\}$ is a de Branges chain then
$\theta_{E_t}$ is a chain in TO. Clearly, such chains do not present all possible chains in TO since, for instance, not all such chains consist of MIFs. Even if we restrict our attention to all TO chains in the subset of MIFs, de Branges chains do not produce all such chains as follows from
Example \ref{echain}. Finding a way to determine if a chain in TO is a de Branges chain seems like another interesting problem.

\section{TO in problems of Harmonic Analysis}

\ms\no In the rest of the paper we look at connections of Toeplitz order with some of the classical problems of Harmonic Analysis.
Our goal here is to provide only a brief overview of such connections without going into deeper technical details.

\ms\no We start with two completeness problems for families of complex exponentials, the Beurling-Malliavin (BM) problem and its  extensions studied in \cite{MIF1, MIF2}, and
the Type problem recently considered in \cite{Type}. We then discuss sampling problems in Paley-Wiener and de Branges spaces with some remarks on the two-weight Hilbert problem, see \cite{OS} and \cite{NTV, L, L&Co} for further references.

\subsection{BM problem}

Let $\L=\{\l_n\}$
be a sequence of distinct points in the complex plane and let
$$E_\L=\{e^{i\l_n x}\}$$
be a sequence of complex exponential functions on $\R$ with frequencies from $\L$.

\ms\no For any complex sequence
$\L$ its
radius of completeness is defined as
$$R(\L)=\sup\{ a\ |\ E_\L \textrm{ is complete in }L^2(0,a)\}.$$
The famous BM problem which was solved in \cite{BM1, BM2}, asks to find a formula for $R(\L)$ for an arbitrary $\L\subset \C$.

\ms\no It is well-known in the theory of completeness that the general problem can be easily reduced to the case of real sequences $\L$.
More precisely, if $\L$ is a general complex sequence then $E_\L$ is complete in $L^2([0,a])$ if and only if
$E_{\L'}$ is complete in the same space, where $\L'$ is the real sequence defined as $\lan'=1/\Re \frac 1\lan$ (WLOG $\L$ has no purely imaginary points), see for instance
\cite{Koosis}.
Also, as will be explained below, one can always assume that $\L$ is a discrete sequence, i.e. has no finite accumulation points.

\ms\no A  system of complex exponentials $E_\L$ is incomplete in $L^2([0,a])$ if and only if there exists a non-zero $f\in L^2([0,a])$
such that $f\perp e^{i\l_n  x}$ for all $\lan\in\L$. Taking the Fourier transform of $f$ we arrive at the equivalent reformulation that
 $E_\L$ is incomplete in
$L^2([0,2a)])$ if and only if $\L$ is a zero for $PW_a$.

\ms\no One immediate consequence of this connection is that if $\L$ has a finite accumulation point then $R(\L)=\infty$.
Aso since any zero set $\L\subset\C_+$ of a $PW$-space must satisfy the Blaschke condition, $R(\L)=\infty$ for any non-Blaschke
$\L\subset\C_+$.

\ms\no To give the formula for $R(\L)$ we will need the following definitions.

\ms\no If $\{I_n\}$ is a sequence of disjoint intervals on $\R$, we call it short if
$$\sum\frac{|I_n|^2}{1+\dist^2(0,I_n)}<\infty$$
and long otherwise.

\ms\no If $\L$ is a sequence of real points define its exterior BM density (effective BM density)
as

$$D^*(\L)=\sup\{ d\ |\ \exists\textrm{ long  }\{I_n\}\textrm{ such that }\#(\L\cap I_n)\geqslant d|I_n|\},\ \forall n\}$$

\ms\no For a complex sequence define $D^*(\L):=D^*(\L')$.

\begin{theorem}[Beurling and Malliavin, around 1961, \cite{BM1, BM2}]\label{tBM}
	Let $\L$ be a discrete sequence. Then
	$$R(\L)=D^*(\L).$$
\end{theorem}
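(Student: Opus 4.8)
The plan is to route the completeness question through the dominance-set machinery of Section~\ref{DS1Section} and then reduce it to a single analytic input, the Beurling--Malliavin multiplier theorem. First I would carry out the standard reductions recalled above: replace $\L$ by the real sequence $\lan'=1/\Re(1/\lan)$ and discard the case of a finite accumulation point, so that $\L\subset\R$ is discrete and $R(\L)<\infty$ may be assumed. By the stated equivalence, $E_\L$ is incomplete in $L^2(0,2a)$ exactly when $\L$ is a zero set of $\PW_a$. Now $\PW_a=B(E)$ for $E=S^{-a}$, whose associated MIF is $\theta_E=E^\#/E=S^{2a}$. Applying Lemma~\ref{lem200}(1) with empty complex part (so $\L$ is the real part $\G$), $\L$ is a zero set of $\PW_a$ iff there is an inner $I$ with $\spec_I=\L$ and $I\in\DD(S^{2a})$, i.e. with $N[\overline{S^{2a}}\,I]\neq 0$. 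Thus $R(\L)=2a_0$, where $a_0$ is the infimum of those $a$ for which $S^{2a}$ dominates, in Toeplitz order, some inner function with spectrum $\L$. In this way the theorem becomes a statement about the critical exponential type at which the half-line exponential $S^{2a}$ begins to dominate a prescribed spectrum.

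Next I would pass to arguments. Choosing a meromorphic $\Theta$ with $\{\Theta=1\}=\L$ and writing $\gamma=\arg\Theta$ for its increasing real-analytic argument, the symbol $\overline{S^{2a}}\Theta$ has phase $\gamma-2ax$, and the relevant difference of arguments is $\phi=\frac12(2ax-\gamma)$ as in Section~\ref{secdif}. The point masses of $\Theta$ at the points of $\L$ (equivalently, the values of $\Theta'$) are free parameters, and I am allowed to choose them to make the kernel as large as possible. The criterion I would use is that $N[\overline{S^{2a}}\,\Theta]\neq 0$ precisely when the growing function $2ax-\gamma$ splits into a nondecreasing part plus the harmonic conjugate of a function bounded below, i.e. when a suitable outer multiplier $h$ of the type in Section~\ref{secdif} exists; this is the content of the Beurling--Malliavin multiplier theorem.

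It then remains to identify $a_0$ with the density and to prove the two matching inequalities. For $R(\L)\le D^*(\L)$ (the constructive, incompleteness direction), given $2a$ above the effective density one tunes the masses of $\Theta$ and invokes the multiplier theorem to produce the outer function $h$, hence a nonzero $f\in N[\overline{S^{2a}}\,\Theta]$ and a corresponding $\PW_a$ function vanishing on $\L$; the long-interval condition in the definition of $D^*$ is exactly what guarantees the multiplier exists. For $R(\L)\ge D^*(\L)$ (completeness below the density), a Jensen-type counting estimate shows that no entire function of exponential type $\le a$ in $L^2(\R)$ can vanish along $\L$ once $2a$ lies below $D^*$, since the Blaschke-type sum associated with the zeros accumulated on the long intervals would diverge; here the short/long dichotomy quantifies the divergence precisely. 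Combining the two gives $R(\L)=D^*(\L)$.

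The deep step, and the main obstacle, is the multiplier theorem itself: constructing, for a prescribed long-interval/density configuration, an outer function of arbitrarily small exponential type whose modulus majorizes a given weight on $\R$. Everything else is bookkeeping through the Clark and de~Branges correspondences already assembled in this paper. I would therefore isolate the multiplier construction as the single hard lemma, establish the Toeplitz-kernel nontriviality criterion of the second paragraph from it, and then derive the density formula from the short/long interval dichotomy that characterizes the solvability of the multiplier problem.
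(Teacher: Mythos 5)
This is a statement the paper does not prove: Theorem~\ref{tBM} is quoted as the classical Beurling--Malliavin theorem from \cite{BM1, BM2}, and the Toeplitz reformulations (Theorems~\ref{tBMTO1} and~\ref{tBMTO2}) are derived \emph{from} it, not the other way around. So your translation of the completeness question into the language of $\DD(S^{2a})$ via Lemma~\ref{lem200} is a correct and faithful rephrasing of what the paper itself does, but it is only a change of coordinates; the analytic content of the theorem is untouched by it, and your proposal does not supply that content.

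The genuine gaps are in the two ``matching inequalities.'' First, you reduce the incompleteness direction to ``the Beurling--Malliavin multiplier theorem,'' but you also conflate two distinct ingredients: the criterion that $N[\overline{S^{2a}}\,\Theta]\neq 0$ iff $2ax-\gamma$ admits a decomposition into a nonincreasing part plus a harmonic conjugate is a separate structural theorem about Toeplitz kernels (established in \cite{MIF2} with some care about $N^+$ versus $H^2$ and the $\pm\e$ loss of type), not ``the content of the multiplier theorem''; the multiplier theorem is the additional tool that converts a Smirnov-class or type statement into an $L^2$ one with arbitrarily small loss of type. Neither is proved or even precisely stated in your outline. Second, and more seriously, the completeness direction $R(\L)\geq D^*(\L)$ cannot be obtained from ``a Jensen-type counting estimate.'' Jensen's formula controls the Cartwright-type density $\lim n(r)/r$ of the zero set of a function in $\CC_a$, whereas $D^*$ is defined through long systems of intervals and is a strictly finer quantity: there are sequences with small Cartwright density but large $D^*$ (and vice versa), which is exactly why the Beurling--Malliavin density theorem is hard. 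The step ``the Blaschke-type sum associated with the zeros accumulated on the long intervals would diverge'' is the entire theorem in this direction, and it requires the harmonic-estimation machinery (or, in the Toeplitz-kernel approach of \cite{MIF2}, a separate and substantial argument), not bookkeeping. As written, the proposal is an accurate map of where the difficulties live, but both difficulties are deferred rather than resolved.
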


\ms\no In regard to Toeplitz order, BM theorem is equivalent to the following statements.
Recall that any MIF $I$ has the form $I=B_\L S^a$ where $B$ is a Blaschke product
and $S^a=e^{iaz}$ is the exponential function.  Put $r(I)=D^*(\L)+a$.

\ms\no The most direct equivalent of Theorem \ref{tBM} is in terms of the dominance set.

\begin{theorem}\label{tBMTO1}
	$I\in \DD(S^b)$ if $r<b$ and $I\not\in\DD(S^b)$ if $r>b$.
\end{theorem}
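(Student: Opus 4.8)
The plan is to translate the statement into the language of completeness of exponentials, where it becomes a direct reformulation of the Beurling--Malliavin Theorem \ref{tBM}. Write $I=B_\L S^a$ and recall that $I\in\DD(S^b)$ means $N[\bar S^b I]\neq 0$. First I would dispose of the case $a\geq b$: here $S^b$ divides $I$, so $\bar S^b I=I/S^b$ is inner and $N[I/S^b]=0$, because multiplication by an inner function maps $H^2$ into itself injectively. Thus $I\notin\DD(S^b)$, which is consistent with $r(I)=D^*(\L)+a\geq a\geq b$. So the content lies in the range $a<b$.

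For $a<b$, using $\bar S^b S^a=S^{a-b}=\bar S^{\,b-a}$ on $\R$ I would rewrite the kernel as $N[\bar S^b I]=N[\bar S^{\,b-a}B_\L]$, so that $I\in\DD(S^b)$ if and only if the pure Blaschke product $B_\L$ lies in $\DD(S^{\,b-a})$. This peels the singular factor $S^a$ off and reduces everything to dominance of $S^{\,b-a}$ by a Blaschke product. Next I would invoke Lemma \ref{lem200}(1) with $\theta=S^{\,b-a}$, i.e.\ with the de Branges space $B(E)=\PW_{(b-a)/2}$ (for which $\theta_E=S^{\,b-a}$): taking empty real spectrum, it says that $\L$ is a zero set of $\PW_{(b-a)/2}$ exactly when $B_\L\in\DD(S^{\,b-a})$. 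Finally, the completeness reformulation recorded earlier, that $E_\L$ is incomplete in $L^2([0,c])$ iff $\L$ is a zero set of $\PW_{c/2}$, together with Theorem \ref{tBM}, gives that $\L$ is a zero set of $\PW_{(b-a)/2}$ iff $E_\L$ is incomplete in $L^2([0,b-a])$ iff $b-a>R(\L)=D^*(\L)$. Rearranging, $I\in\DD(S^b)$ iff $b>D^*(\L)+a=r(I)$, which yields both asserted implications.

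The routine but genuinely delicate part is the bookkeeping around the singular factor and the passage to real frequencies. I would need to make sure the kernel identity $N[\bar S^b B_\L S^a]=N[\bar S^{\,b-a}B_\L]$ is justified as an equality of $H^2$-kernels, and that the reduction of a complex sequence $\L$ to its real companion $\L'$ (under which both $D^*$ and completeness are invariant, as noted before Theorem \ref{tBM}) is compatible with the Blaschke condition needed for $B_\L$ to be inner in $\C_+$. The boundary case $r(I)=b$ is deliberately excluded from the statement, so no analysis of the critical density is required; the two strict inequalities $r<b$ and $r>b$ fall on opposite sides of the completeness threshold $R(\L)=D^*(\L)$ and need only the two one-sided halves of the Beurling--Malliavin theorem.
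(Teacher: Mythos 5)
Your proof is correct and matches the paper's intent: the paper states Theorem \ref{tBMTO1} as "the most direct equivalent" of the Beurling--Malliavin theorem without writing out the translation, and your argument (peeling off the singular factor, identifying $\DD(S^{b-a})$ membership of $B_\L$ with $\L$ being a zero set of $\PW_{(b-a)/2}$, and invoking the completeness reformulation plus Theorem \ref{tBM}) is exactly that translation made explicit. The only nitpick is that the "iff" at the end should be read away from the boundary $r(I)=b$, which you already note.
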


\ms\no The statement
can be equivalently reformulated in terms of TO.

\begin{theorem}\label{tBMTO2}
	For any MIF $I$,
	$$I\tleq S^b\ \Rightarrow\ r(I)\leq b$$
and
$$r(I)<b\ 	\Rightarrow\ I\tl S^b.$$
	
\end{theorem}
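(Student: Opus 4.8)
The plan is to deduce Theorem~\ref{tBMTO2} from Theorem~\ref{tBMTO1}, treating its two implications separately. The only ingredients I need beyond Theorem~\ref{tBMTO1} are already at hand: Coburn's lemma (for nonconstant inner $U,V$ the kernels $N[\bar U V]$ and $N[\bar V U]$ cannot both be nontrivial), Lemma~\ref{lemdom} (if $N^\infty[\bar I_1 I_2]\neq 0$ then $I_1\tgeq I_2$), and the fact that $\DD(\theta)$ contains all proper inner divisors of $\theta$, and more generally all inner divisors of its elements. Throughout write $I=B_\L S^a$, so that $r(I)=D^*(\L)+a$, and recall that $\DD(S^c)$ is exactly the set of inner components of functions in $K_{S^c}$.

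For the implication $I\tleq S^b\Rightarrow r(I)\leq b$ I would argue by contraposition. Assuming $r(I)>b$, my aim is to exhibit a MIF $\theta\in\DD(I)$ with $\theta\notin\DD(S^b)$, which forces $\DD(I)\not\subset\DD(S^b)$. The idea is to take a proper divisor $\theta$ of $I$ with $r(\theta)>b$: when $\L$ is infinite one deletes a single point of $\L$ (leaving $D^*$, hence $r$, unchanged), and when $\L$ is finite one has $r(I)=a>b$ and instead replaces $S^a$ by $S^{a'}$ with $b<a'<a$. In either case $\theta$ is a proper divisor, so $\theta\in\DD(I)$, while $r(\theta)>b$ gives $\theta\notin\DD(S^b)$ by Theorem~\ref{tBMTO1}. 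Hence $I\tleq S^b$ is impossible, which is the desired contrapositive.

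For the implication $r(I)<b\Rightarrow I\tl S^b$ I must establish both the inclusion $I\tleq S^b$ and the strictness $\DD(I)\subsetneq\DD(S^b)$. The strict part is quick: choose $b''$ with $r(I)<b''<b$. Since $r(I)<b''$, Theorem~\ref{tBMTO1} gives $I\in\DD(S^{b''})$, i.e. $N[\bar S^{b''}I]\neq 0$; Coburn's lemma then forces $N[\bar I S^{b''}]=0$, so $S^{b''}\notin\DD(I)$. On the other hand $r(S^{b''})=b''<b$ gives $S^{b''}\in\DD(S^b)$. Thus $S^{b''}\in\DD(S^b)\setminus\DD(I)$, so $\DD(S^b)\not\subset\DD(I)$, which together with the inclusion below yields $\DD(I)\subsetneq\DD(S^b)$ and hence $I\tl S^b$.

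The inclusion $I\tleq S^b$ is the delicate point, because dominance is not transitive (this is the very reason Toeplitz order is needed): chaining $\theta\in\DD(I)$ with $I\in\DD(S^b)$ by multiplying kernel elements only lands one in the $H^1$-kernel. Instead my plan is to manufacture a \emph{bounded} element of $N^\infty[\bar S^b I]$ and apply Lemma~\ref{lemdom} with $I_1=S^b,\ I_2=I$, which gives $S^b\tgeq I$ at once. Put $c=b-a>0$ (note $a\leq r(I)<b$). Since $r(B_\L)=D^*(\L)=r(I)-a<c$, Theorem~\ref{tBMTO1} yields $B_\L\in\DD(S^c)$, i.e. a nonzero $\phi\in K_{S^c}$ divisible by $B_\L$, so that $f:=\phi/B_\L\in H^2$. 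The key observation making this work is that every function in $K_{S^c}$ has Fourier transform supported in $[0,c]$ and is therefore bounded on $\R$ (its transform is integrable); as $|B_\L|=1$ on $\R$, $f$ is bounded as well. Finally $\bar S^b I f=\bar S^{b-a}\phi=\bar S^c\phi=\bar\psi$ with $\psi=S^c\bar\phi\in H^2\cap L^\infty=H^\infty$, so $0\neq f\in N^\infty[\bar S^b I]$ and Lemma~\ref{lemdom} gives $I\tleq S^b$. I expect this last step — upgrading the $H^2$-kernel supplied by Theorem~\ref{tBMTO1} to an $H^\infty$-kernel so that Lemma~\ref{lemdom} becomes applicable — to be the main obstacle, and the boundedness remark above is exactly what removes it.
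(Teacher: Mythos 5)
Your proof is correct and takes essentially the same route as the paper's: the first implication is obtained by deleting a zero (or shrinking the singular factor) to produce a divisor of $I$ lying in $\DD(I)\setminus\DD(S^b)$, and the inclusion in the second implication rests on the same key observation that the kernel element supplied by Theorem~\ref{tBMTO1} is bounded on $\R$ because functions in $K_{S^c}$ have integrable Fourier transforms — the paper multiplies the two kernel elements directly where you invoke Lemma~\ref{lemdom}, whose proof is that very multiplication. Your explicit verification of strictness via Coburn's lemma is a small addition the paper leaves implicit.
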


\ms\no Note that equivalence of the last two statement no longer holds if $S$ is replaced with a general inner function. Finding a broader set of functions
for which the equivalence does hold is an open problem.

\begin{proof}[Proof of Theorem \ref{tBMTO2}] The general case can be trivially reduced to the case $I=B_\L$. Suppose  first that $r=D^*(\L)>b$. Let $a\in \L$ be a zero of $I$.
	Then $D^*(\L\setminus\{a\})>b$ and $I/b_a\not\in \DD(S^b)$ by BM theorem (Theorem \ref{tBMTO1}). Since $I/b_a\in \DD(I)$, the relation $I\tleq S^b$ does not hold.
	
\ms\no 	To establish the second statement, suppose that $r=D^*(\L)<b$. If $J\in\DD(I)$ then there exists $f\in N[\bar IJ],\ f\not\equiv 0$.
	Also, since $D^*(\L)<b$, by Theorem \ref{tBMTO1} there exists $g\in N[\bar S^bI],\ g\not\equiv 0$. Note that $Ig\in S^{b/2}PW_{b/2}$ which implies
	$g\in H^\infty$. Then
	$$\bar S^b J gf=(\bar S^b I g )(\bar I J f)\in \bar H^2,$$
	which means that $J\in \DD(S^b)$. Hence $\DD(I)\subset\DD(S^b)$ and $I\tl S^b$.
	
\end{proof}

\ms\no As we can see, the Beurling-Malliavin formula gives a metric condition for the relation of TO in the very specific case when one
of the functions to be compared is the exponential function. Similar descriptions for more general classes
of inner functions, especially those appearing in applications to completeness problems and spectral analysis
remain mostly open. Below we present one of such extensions found in \cite{MIF2}.

 \subsection{Further generalizations}

 Reformulations of the BM theorem given in the last section present a clear direction for generalizations of the BM theory.
 While a statement analogous to Theorems \ref{tBMTO1} and \ref{tBMTO2} with a general inner function in place of $S^a$ may be out of
 reach at the moment, one can attempt to replace the exponential function with an inner function from a larger class.
To determine the right classes of inner functions to study in these settings one may look at a variety of applications
of the Toeplitz Approach in Harmonic analysis and Spectral Theory.

\ms\no One of such extensions was recently studied in \cite{MIF1, MIF2}. As was shown in \cite{MIF1} the class of MIFs with
polynomially growing arguments appears naturally in a number of applications including completeness problems for
Airy and Bessel functions, spectral problems for regular Schr\"odinger operators and Dirac systems, etc. An analog of
Theorem \ref{tBMTO1} proved in \cite{MIF2} can be applied to some of such problems. Here we present
an equivalent reformulation similar to Theorem \ref{tBMTO2}.

\ms\no  Let $\gamma: \R\to\R$
be a continuous function such that $\gamma(\mp\infty)=\pm\infty$.
i.e.,
$$\lim_{x\to-\infty}\gamma(x)=+\infty,\qquad \lim_{x\to+\infty}\gamma(x)=-\infty.$$
Define $\gamma^*$ to be the smallest non-increasing majorant of $\gamma$:
$$\gamma^*(x)=\max_{t \in [x,+\infty)}\gamma(t) .$$
The family of intervals $BM(\gamma)=\{I_n\}$ is defined as the collection  of the connected
components of the open set
$$\left\{x \in \R|~\gamma(x)\ne\gamma^*(x)\right\}.$$

\ms\no Let $\kappa\geq 0$ be a constant. We say that $\gamma$ is $\kappa$-{\it almost decreasing} if
\begin{equation}\label{kap}\sum_{I_n\in BM(\gamma)}(\dist(I_n,0)+1)^{\kappa-2}|I_n|^2<\infty.\end{equation}

\ms\no As before, an argument of a MIF $I$ on $\R$ is a real analytic function $\psi$ such that $I=e^{i\psi}$.

\begin{theorem}\label{tMIF2}
	Let $U$ be a MIF with $|U'|\asymp x^\kappa,\\ \kappa\geq 0$, $\gamma=\arg U $ on $\R$.
	Let $J$ be another MIF, $\sigma=\arg J$ on $\R$. Then
	
	\ms
	
\ms\no 	I) If $\sigma-(1-\e)\gamma$ is $\kappa$-almost decreasing, then $J\in \DD(U)$;
	
\ms\no 	II) If $\sigma-(1+\e)\gamma$ is not  $\kappa$-almost decreasing, then $J\not \in \DD(U)$.
\end{theorem}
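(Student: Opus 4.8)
The plan is to reduce membership in $\DD(U)$ to an existence question for an admissible multiplier, and then to settle that question by the weighted Beurling--Malliavin multiplier theorem, whose analytic core is carried out in \cite{MIF2}. By the Corollary to Proposition \ref{p100}, $J\in\DD(U)$ if and only if $\phi(U,J)=\frac12(\gamma-\sigma)$ admits a decomposition $\phi(U,J)=\ti h+\frac12\alpha$ with $h\in\textrm{Log}\,|H^2|$ and $\alpha$ the argument of an inner function; equivalently $\sigma-\gamma=-\alpha-2\ti h$. Thus the entire question is whether the target $\sigma-\gamma$ can be corrected by a conjugate $-2\ti h$ of an admissible weight so that the remainder $-\alpha$ is a (monotone) inner argument. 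The hypothesis $|U'|\asymp x^\kappa$ enters precisely here: the reproducing kernels of $K_U$ satisfy $\|k^U_x\|^2\asymp|U'(x)|\asymp x^\kappa$, so the class $\textrm{Log}\,|H^2|$ relevant to the admissible weights carries the weight $x^\kappa$, which is the origin of the exponent $\kappa-2$ in the condition \eqref{kap}.

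First I would treat part I), the sufficient condition for $J\in\DD(U)$, generalizing the half ``$r<b\Rightarrow I\in\DD(S^b)$'' of Theorem \ref{tBMTO1}. Assuming $\sigma-(1-\e)\gamma$ is $\kappa$-almost decreasing, the family $BM(\sigma-(1-\e)\gamma)$ is short in the weighted sense \eqref{kap}, so $\sigma-(1-\e)\gamma$ differs from its nonincreasing majorant only over a negligible collection of intervals. The weighted multiplier theorem then yields $h\in\textrm{Log}\,|H^2|$ whose conjugate $\ti h$ absorbs this non-monotone defect, while the surplus $\e\gamma$ --- itself essentially monotone, of density fixed by $|U'|$ --- supplies the strict room needed to complete $-\alpha$ to a genuine inner argument. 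Reading the decomposition back through the Corollary to Proposition \ref{p100} produces a nonzero element of $N[\bar U J]$, so $J\in\DD(U)$; here $\e$ plays the role of the density gap $b-r$ in the model case.

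For part II) I would argue by contraposition, generalizing ``$r>b\Rightarrow I\notin\DD(S^b)$''. If $J\in\DD(U)$, the decomposition $\sigma-\gamma=-\alpha-2\ti h$ holds with $h\in\textrm{Log}\,|H^2|$ and $\alpha$ an inner argument. Since an admissible $\ti h$ can only repair non-monotonicity up to the weighted-short scale \eqref{kap}, subtracting an additional $\e\gamma$ forces $\sigma-(1+\e)\gamma$ to be $\kappa$-almost decreasing. Contrapositively, if $\sigma-(1+\e)\gamma$ is not $\kappa$-almost decreasing, no admissible $h$ can exist and $J\notin\DD(U)$.

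The main obstacle is the weighted Beurling--Malliavin multiplier step itself: quantitatively matching the $\kappa$-almost-decreasing condition \eqref{kap} with the existence of an admissible multiplier $\exp(\ti h+ih)$, $h\in\textrm{Log}\,|H^2|$, that corrects the non-monotone part of the symbol's argument. This is the analytic heart supplied by \cite{MIF2}, and the gap between the factors $(1-\e)$ and $(1+\e)$ is the classical Beurling--Malliavin gap, reflecting that the criterion does not decide the critical density. Granting this step, the remaining bookkeeping of the $(1\mp\e)$ factors and of the $S^a$-component is routine, exactly as in the proof of Theorem \ref{tBMTO2}.
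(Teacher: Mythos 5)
The first thing to say is that the paper does not prove Theorem \ref{tMIF2} at all: it is imported verbatim from \cite{MIF2} (``An analog of Theorem \ref{tBMTO1} proved in \cite{MIF2}\dots''), and the only proof the paper supplies in this section is of the subsequent TO reformulation, \emph{assuming} Theorem \ref{tMIF2}. So there is no in-paper argument to compare yours against, and your proposal has to stand on its own. Taken on its own terms, it is a strategy outline rather than a proof. The opening reduction is legitimate: by the Corollary to Proposition \ref{p100}, $J\in\DD(U)$ iff $\sigma-\gamma=-\alpha-2\ti h$ for some $h\in\textrm{Log}\,|H^2|$ and some inner argument $\alpha$, so the question is indeed whether the conjugate of an admissible $h$ can make $\sigma-\gamma$ decreasing. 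But from that point on, both halves of your argument consist of invoking ``the weighted Beurling--Malliavin multiplier theorem'' for exactly the implication to be proved. In part I, the assertion that the multiplier theorem produces an $h$ whose conjugate ``absorbs the non-monotone defect'' while $\e\gamma$ ``supplies the strict room'' is precisely the content of the (hard) multiplier-type theorems of \cite{MIF2}; no mechanism is given. In part II, the sentence ``an admissible $\ti h$ can only repair non-monotonicity up to the weighted-short scale \eqref{kap}'' \emph{is} the converse (second-Beurling--Malliavin-type) statement, which is the most difficult part of the whole theory; asserting it is assuming the conclusion.

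Two further points are imprecise enough to count as gaps. First, $\textrm{Log}\,|H^2|$ does not ``carry the weight $x^\kappa$'': it is a fixed, unweighted class ($f\in L^1_\Pi$, $e^{f_+}\in L^2$). The exponent $\kappa-2$ in \eqref{kap} arises from estimating the harmonic conjugate of the monotonicity defect against the density $\gamma'\asymp|U'|\asymp x^\kappa$ (each interval $I_n$ of $BM(\cdot)$ contributes on the order of $|I_n|^2(\dist(I_n,0)+1)^{\kappa}$ to the relevant energy, and the Poisson weight supplies the $-2$); attributing it to the reproducing-kernel norms of $K_U$ does not by itself produce the condition. Second, the claim that ``the remaining bookkeeping \dots is routine, exactly as in the proof of Theorem \ref{tBMTO2}'' is too quick: the reductions in Theorem \ref{tBMTO2} and in the TO reformulation following Theorem \ref{tMIF2} go in the opposite logical direction (they \emph{use} the dominance-set criterion to derive the order statement), and the multiplication trick $\bar U I hg=(\bar U J h)(\bar J I g)$ used there requires a \emph{bounded} element of the kernel, whose existence again rests on the quantitative estimates you have black-boxed. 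In short: the roadmap is the right one, but every analytically substantive step is delegated to the very reference the theorem is quoted from.
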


\ms\no Let us point out that even finding an analog for the above statement for $\kappa<0$ presents an open problem. Such MIFs appear in some of the applications
mentioned in \cite{MIF2}.

\ms\no To finish this section let us reformulate the last theorem using the relations of TO.

\begin{theorem}
	In the conditions of Theorem \ref{tMIF2},
	
\ms

\ms\no I) If $\sigma-(1-\e)\gamma$ is $\kappa$-almost decreasing, then $J\tl U$;

\ms\no II) If $J\tleq U$ then $\sigma-(1+\e)\gamma$ is  $\kappa$-almost decreasing.
\end{theorem}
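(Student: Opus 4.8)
The plan is to lift Theorem~\ref{tMIF2}, which characterizes membership in the single dominance set $\DD(U)$, to the two order relations, exactly as Theorem~\ref{tBMTO2} lifts the Beurling--Malliavin theorem (Theorem~\ref{tBMTO1}) from $\DD(S^b)$ to $\tleq$ and $\tl$. Unwinding the definitions, Part~II asserts $\DD(J)\subset\DD(U)\Rightarrow\sigma-(1+\e)\gamma$ is $\kappa$-almost decreasing, while Part~I produces the inclusion $\DD(J)\subset\DD(U)$ (and then strictness) from the hypothesis on $\sigma-(1-\e)\gamma$. I would prove the two parts along the two halves of the proof of Theorem~\ref{tBMTO2}.

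For Part~II, assume $J\tleq U$ and pick a zero $a\in\C_+$ of $J$; then $J/b_a$ is a base element of $\DD(J)$, hence lies in $\DD(U)$ by assumption. Its argument equals $\sigma-\arg b_a=\sigma+O(1)$, where $\arg b_a$ is bounded and increasing with integrable derivative. Applying the contrapositive of Theorem~\ref{tMIF2}(II) with reference $U$ and test function $J/b_a$, membership in $\DD(U)$ forces $(\sigma-\arg b_a)-(1+\e)\gamma$ to be $\kappa$-almost decreasing, and hence so is $\sigma-(1+\e)\gamma$. The only routine point is the invariance of $\kappa$-almost decreasing under the mild perturbation $\arg b_a$---a bounded, increasing function with integrable derivative---which changes neither the intervals $BM(\cdot)$ nor the convergence of~\eqref{kap} in any essential way.

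For Part~I, Theorem~\ref{tMIF2}(I) first gives $J\in\DD(U)$, i.e.\ a nonzero bridge $g\in N[\bar U J]$. To promote this to $\DD(J)\subset\DD(U)$ I would use the multiplicative identity behind Theorem~\ref{tBMTO2}: for arbitrary $L\in\DD(J)$ with $f\in N[\bar J L]$,
$$\bar U L\,(fg)=(\bar J L f)(\bar U J g),$$
so $fg$ is a candidate element of $N[\bar U L]$, which would yield $L\in\DD(U)$ and hence the inclusion. Both factors on the right lie in $\bar H^2$, so a priori the product lies only in $\bar H^1$; to land $fg$ in $H^2$ with $\bar U L\,(fg)\in\bar H^2$ it suffices that the bridge $g$ be bounded, since then $\bar U Jg\in\bar H^\infty$ and $fg\in H^2$. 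In fact, the existence of a bounded element of $N[\bar U J]$ is precisely the hypothesis $N^\infty[\bar U J]\neq 0$ of Lemma~\ref{lemdom}, which already returns $J\tleq U$.

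The hard part will be exactly the boundedness of the bridge. For $\kappa=0$ it is automatic, as in Beurling--Malliavin: there $Jg$ lies in a shifted Paley--Wiener space, hence is bounded on $\R$. For $\kappa>0$ this fails, since the reproducing-kernel bound only gives $|g(x)|\lesssim|U'(x)|^{1/2}\asymp|x|^{\kappa/2}$, so kernel elements of $\bar U J$ genuinely grow and the naive product need not return to $H^2$. I would overcome this by spending the strict gap between $(1-\e)\gamma$ and $\gamma$: passing to an intermediate MIF $V$ with $\arg V\approx(1-\tfrac\e2)\gamma$ and $|V'|\asymp x^\kappa$, Theorem~\ref{tMIF2}(I) keeps $J\in\DD(V)$ because the effective coefficient stays in $(1-\e,1)$, and the leftover room should let me choose a bridge whose decay compensates the growth of $f$ so that $fg\in H^2$. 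Making this compensation precise---equivalently, producing an element of $N[\bar U J]$ whose growth is governed by the gap rather than by $|U'|^{1/2}$---is the crux, and is where I expect to need the sharper kernel estimates underlying the proof of Theorem~\ref{tMIF2} in \cite{MIF2}. Finally, the strict inequality $J\tl U$ would follow by exhibiting, through the same gap, a MIF whose argument lies strictly between $\sigma$ and $\gamma$ and which lies in $\DD(U)\setminus\DD(J)$.
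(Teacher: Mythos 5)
Part II of your proposal is essentially the paper's argument: pass to $J/b_a$ for a zero $a$ of $J$, note $J/b_a\in\DD(J)$, observe that subtracting the bounded increasing function $\arg b_a$ does not affect the $\kappa$-almost decreasing property, and apply Theorem \ref{tMIF2}(II). That half is fine. (You do not actually need $J/b_a$ to be a \emph{base} element, only that it lies in $\DD(J)$, which is immediate since $N[\overline{J}\,(J/b_a)]=K_{b_a}\ni 1/(z-\bar a)$.)

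Part I, however, has a genuine gap, and it sits exactly where you locate the crux: producing a \emph{bounded} element of $N[\bar U J]$ so that the multiplicative identity $\bar U L(fg)=(\bar J L f)(\bar U J g)$ lands $fg$ back in $H^2$. Your proposed remedy --- interpolating an intermediate MIF $V$ with $\arg V\approx(1-\tfrac\e2)\gamma$ and hoping the leftover room yields a decaying bridge --- is not carried out, and it is not clear it can be: chaining $J\to V\to U$ reproduces the same difficulty, since each link only supplies a kernel element obeying the reproducing-kernel bound $|g|\lesssim |U'|^{1/2}\asymp|x|^{\kappa/2}$, and the product of two such $H^2$ elements is a priori only in $H^1$. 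The paper's resolution is elementary and does not require revisiting the estimates of \cite{MIF2}: apply Theorem \ref{tMIF2}(I) not to $J$ but to $JB$, where $B$ is a finite Blaschke product with zeros $a_1,\dots,a_n$ (this costs only a bounded perturbation of $\sigma$, so the hypothesis is preserved). The resulting $f\in N[\bar U JB]\subset K_U$ satisfies $|f|\le C|U'|^{1/2}\lesssim |x|^{\kappa/2}$, and dividing by the polynomial $(z-a_1)\cdots(z-a_n)$ produces $h\in N[\bar U J]$ with $|h|\lesssim|x|^{\kappa/2-n}$, which is bounded once $n=n(\kappa)$ is large enough. With this $h$ in hand your identity closes exactly as you wrote it, giving $\DD(J)\subset\DD(U)$. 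Your reduction of Part I to the existence of a bounded bridge (equivalently, to the hypothesis of Lemma \ref{lemdom}) is therefore the right skeleton, but the key construction is missing and your substitute does not supply it.
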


\begin{proof}
	Once again, The general case can be reduced to the case $J=B_\L$: otherwise
	replace the singular factor of $J$ with its Frostman transform ${\bf b}_w(S^a)$.

\ms\no 	I) If $\sigma-(1-\e)\gamma$ is $\kappa$-almost decreasing,
	then by Theorem \ref{tMIF2} there exists a non-trivial function $f\in N[\bar U JB]$, where $B$ is any finite Blaschke product. Denote the zeros
	of $B$ by $a_1,...a_n$.
	Note that then
	$$h=\frac f{(z-a_1)(z-a_2)...(z-a_n)}\in N[\bar U J].$$
	If $n=n(\kappa)$ is large enough, $h$ is bounded because $|f|<C|U'|^{1/2}$.
	Suppose now that $I\in\DD(J)$, i.e., there exists non-trivial $g\in N[\bar J I]$. Then
		$$\bar U I hg=(\bar U J h )(\bar J I  g)\in \bar H^2,$$
	which implies that $I\in \DD(U)$. Hence $\DD(J)\subset\DD(U)$ and $J\tl U$.

\ms\no II) If	$\sigma-(1+\e)\gamma$ is not $\kappa$-almost decreasing then $\sigma^*-(1+\e)\gamma$ is not $\kappa$-almost decreasing where
$\sigma^*=\arg J/b_a$ for some zero $a$ of $J$. By Theorem \ref{tMIF2} it means that $J/b_a\not\in \DD(U)$, while $J/b_a\in\DD(J)$.
Hence the relation $J\tleq U$ does not hold.

	\end{proof}

\subsection{The Type problem}
Like the Beurling-Malliavin problem, the Type problem concerns  completeness of complex exponentials in $L^2$-spaces.
This time one considers $L^2(\mu)$ for a general finite positive measure $\mu$ on $\R$ and studies completeness of families
of exponential functions with frequencies from a fixed interval. We define the type of  $\mu$ as
$$\TT_\mu=\inf \{ a | e^{ist}, s\in [-a,a],\text{ are complete in }\ L^2(\mu)\}.$$
The problem is to find $\TT_\mu$ in terms of $\mu$. This problem was considered by N. Wiener (in an equivalent form, \cite{WienerBook}) A. Kolmogorov and M. Krein, see \cite{Krein1, Krein2} or
\cite{CBMS} for further discussion and references. Using the Toeplitz approach, a formula for $\TT_\mu$ was recently found in
\cite{Type}, see also \cite{CBMS}. The idea of the Toeplitz approach to the Type problem can be expressed in terms of Toeplitz order in the following form.
Recall that for a positive singular Poisson-finite measure $\mu$  we denote by $\theta_\mu$ the inner function with Clark measure $\mu$.
The general case of the Type problem can be easily reduced to the singular case. 

\begin{theorem}\label{prop600} Let $\mu$ be a positive singular Poisson-finite measure. Then
	$$\TT_\mu=\sup \{a |\ S^a\in \DD(\theta_\mu)\}.$$
\end{theorem}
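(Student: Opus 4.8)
The plan is to convert the completeness problem into non-triviality of the Toeplitz kernels $N[\bar\theta_\mu S^a]$ and then read $\TT_\mu$ off as the threshold at which these kernels disappear. Write $\theta=\theta_\mu$; after the reduction to the singular case we may assume $\mu$ carries no mass at $\infty$, so that Clark theory (Section \ref{secMod}) supplies the isometry $U_\theta\colon L^2(\mu)\to K_\theta$, $g\mapsto (1-\theta)Kg\mu$. First I would record that $g\in L^2(\mu)$ annihilates $\{e^{ist}\}_{s\in[-a,a]}$ precisely when $\widehat{g\mu}(s)=\int e^{-ist}g\,d\mu$ vanishes on $[-a,a]$, so that the family is incomplete iff some nonzero $g$ has $\widehat{g\mu}\equiv 0$ there. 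The bridge to Hardy space is the identity, valid for $z\in\C_+$,
\[ Kg\mu(z)=\frac1{2\pi}\int_0^\infty e^{isz}\,\widehat{g\mu}(s)\,ds, \]
obtained by expanding $1/(t-z)$ as a one-sided Fourier (Laplace) integral; it shows that the boundary spectrum of $Kg\mu$, and hence of $U_\theta g\in K_\theta$, is carried by $\widehat{g\mu}|_{[0,\infty)}$.

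\ms\no On the Toeplitz side I would use the elementary characterization: for $a\ge 0$ one has $F\in N[\bar\theta S^a]$ iff $F\in K_\theta$ and $\theta\bar F\in S^aH^2$. Indeed, $P_+(\bar\theta S^aF)=0$ says $\bar\theta S^aF=\bar H$ with $H\in H^2$; multiplying by $\overline{S^a}$ gives $\bar\theta F=\overline{S^aH}\in\overline{H^2}$, i.e. $F\in K_\theta$, and the original relation then reads exactly that $\theta\bar F$ is divisible by the inner function $S^a$. Combined with $F\in H^2$, these two membership conditions confine the spectrum of $F$ to a band, and through $U_\theta$ this band is cut out by the vanishing of $\widehat{g\mu}$ on a subinterval adjacent to the origin: $F\in K_\theta$ records that $F$ sees only one ``period'' of $\widehat{g\mu}$, while $\theta\bar F\in S^aH^2$ records the vanishing of $\widehat{g\mu}$ on an interval at the origin. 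Running these translations in both directions — producing a kernel element from an annihilator by a spectral shift, and recovering an annihilator from a kernel element by the inverse Clark map — would yield an equivalence of the form: $\{e^{ist}\}_{s\in[-a,a]}$ is complete in $L^2(\mu)$ iff $N[\bar\theta_\mu S^{c}]=\{0\}$ for a suitable exponent $c=c(a)$ linear in $a$, i.e. iff $S^{c}\notin\DD(\theta_\mu)$.

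\ms\no Granting this equivalence, the theorem becomes a monotonicity-and-threshold statement. The kernels are nested, $N[\bar\theta S^{a}]\subset N[\bar\theta S^{a'}]$ whenever $a'\le a$ (since $S^{a'}$ divides $S^a$, the condition $\theta\bar F\in S^aH^2$ only weakens), so $\{a:S^a\in\DD(\theta_\mu)\}$ is an interval containing $0$ whose supremum $B$ is the value at which the kernel collapses. Completeness is monotone increasing in $a$, hence $\{a:\{e^{ist}\}_{s\in[-a,a]}\text{ complete}\}=[\TT_\mu,\infty)$ and $\TT_\mu=\inf\{a:S^{c(a)}\notin\DD(\theta_\mu)\}$. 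Matching the two thresholds then gives $\TT_\mu$ as a multiple of $B=\sup\{a:S^a\in\DD(\theta_\mu)\}$, and with the normalization $c(a)=a$ this is exactly $\TT_\mu=\sup\{a:S^a\in\DD(\theta_\mu)\}$.

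\ms\no The hard part is twofold. Analytically, for a purely singular $\mu$ the Cauchy transform $Kg\mu$ must be controlled through nontangential boundary values defined only $\mu$-a.e. and Lebesgue-a.e. on two different null structures, so that the formal spectral bookkeeping above is promoted to honest statements in $H^2$ (or the Smirnov class); this is precisely where the reduction to the singular case earns its keep. The subtler point is calibrating the normalization, that is pinning down the exact exponent $c(a)$ produced by the symmetric interval $[-a,a]$ together with the spectral shift, which I would verify against the model case $\theta_\mu=S^T$; and one must settle the behavior at the critical value $a=\TT_\mu$, since the type is an infimum over the completeness set while the right-hand side is a supremum over the dominance set, so the two extremal endpoints must be shown to coincide. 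Reconciling these endpoints, where completeness of a singular measure is most delicate, is the crux of the argument.
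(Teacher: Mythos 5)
Your overall strategy coincides with the paper's: pass to $\theta=\theta_\mu$, use the Clark representation $f=(1-\theta)Kf\mu$ to identify membership $S^a\in\DD(\theta_\mu)$ with $S^a$-divisibility of a Cauchy integral $Kf\mu$, and translate that divisibility into vanishing of $\widehat{f\mu}$ on an interval. The paper's proof is three lines: $S^a\in\DD(\theta_\mu)$ iff $S^a$ divides some $f\in K_\theta$; since $1-\theta$ is outer, iff $S^a$ divides $Kf\mu$ for some $f\in L^2(\mu)$; and, \emph{by the cited theorem of Aleksandrov}, iff $f\perp e^{ist}$ for $s\in[-a,a]$. All of the analytic content is concentrated in the last step, which the paper takes as a black box.

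That last step is exactly what your proposal does not supply, and it cannot be recovered by the "spectral bookkeeping" you sketch. Your Laplace representation $Kg\mu(z)=\frac1{2\pi}\int_0^\infty e^{isz}\widehat{g\mu}(s)\,ds$ only connects divisibility of $Kg\mu$ by $S^a$ in $\C_+$ to vanishing of $\widehat{g\mu}$ on the \emph{one-sided} interval $[0,a]$, whereas incompleteness of $\{e^{ist}\}_{s\in[-a,a]}$ is vanishing on the \emph{symmetric} interval $[-a,a]$, i.e.\ (after the modulation $g\mapsto e^{iat}g$, a unitary of $L^2(\mu)$) one-sided vanishing on $[0,2a]$. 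So the exponent you call $c(a)$ is not a normalization you may set equal to $a$ at the end: the computation you actually outline produces $c(a)=2a$, and closing that factor-of-two gap --- equivalently, showing how divisibility of $Kf\mu$ in $\C_+$ interacts with the pseudocontinuation of $(1-\theta)Kf\mu$ into $\C_-$ --- is precisely the content of Aleksandrov's theorem. Nor can the bookkeeping be made rigorous by soft means: $g\mu$ is a singular measure, so by F.\ and M.\ Riesz it admits no splitting into pieces with one-sided spectrum, and identifying the boundary spectrum of $Kg\mu|_{\C_+}$ with $\supp\widehat{g\mu}\cap[0,\infty)$ is itself a theorem about Cauchy transforms of singular measures in $H^p$, $p<1$. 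By contrast, the endpoint issue you flag as "the crux" is vacuous: $\{a:S^a\in\DD(\theta_\mu)\}$ is a subinterval of $[0,\infty)$ containing $0$, and the supremum of such a set equals the infimum of its complement regardless of which set owns the critical point. The test against $\theta_\mu=S^T$ that you propose to defer is in fact the one step you must carry out, since it is where the constant --- and hence the statement itself --- is pinned down.
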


\ms\no We say that an inner function $\theta$ divides a Cauchy integral $K\mu$ for some finite complex measure $\mu$ if $K\mu/\theta\in H^p$ for some $p>0$.
Note that then $K\mu/\theta=K\nu$ where $\nu$ is another finite complex measure, $\nu=\bar{\theta}\mu$ \cite{PoltClark}.

\begin{proof}[Proof of Theorem \ref{prop600}]
	Recall that according to the Clark formula every function from $K_\theta, \ \theta=\theta_\mu$ can be represented in the form
	$f=(1-\theta)Kf\mu$. Since $1-\theta$ is an outer function in $\C_+$,
	$$\sup \{a |S^a\in \DD(\theta_\mu)\}=\sup \{a |S^a\textrm{ divides }f\in K_\theta\}=
	$$$$=\sup \{a |S^a\textrm{ divides }Kf\mu,\ f\in L^2(\mu)\}.$$
	By a theorem of Aleksandrov \cite{Alexandrov} $S^a$ divides $Kf\mu$ iff $f\perp e^{ist}, s\in [-a,a]$. Such an $f$ exists iff the family of exponentials
	$e^{ist}, s\in [-a,a]$ is incomplete in $L^2(\mu)$.
	
	\end{proof}

Utilizing the Beurling-Malliavin multiplier theorem one can deduce the following statement.

\begin{theorem}\label{prop601} Let $\mu$ be a positive singular Poisson-finite measure. Then
	$$\TT_\mu=\sup \{a |\ S^a\tleq\theta_\mu\}.$$
\end{theorem}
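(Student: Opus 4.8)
The plan is to deduce Theorem \ref{prop601} from Theorem \ref{prop600} by comparing the two suprema
$$A:=\sup\{a\mid S^a\in\DD(\theta_\mu)\}=\TT_\mu,\qquad B:=\sup\{a\mid S^a\tleq\theta_\mu\},$$
and showing $A=B$. First I would record that $\{a\mid S^a\in\DD(\theta_\mu)\}$ is an interval: if $f\in N[\bar\theta_\mu S^a]$, say $\bar\theta_\mu S^a f=\bar p$ with $p\in H^2$, and $0\le b<a$, then $\bar\theta_\mu S^b f=\bar p\,\overline{S^{a-b}}=\overline{pS^{a-b}}\in\bar H^2$ since $pS^{a-b}\in H^2$, so $S^b\in\DD(\theta_\mu)$. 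The same computation with $\theta_\mu$ replaced by $S^a$, together with $N[\bar S^{a-b}]=K_{S^{a-b}}\neq0$, shows $S^b\in\DD(S^a)$ for every $b<a$. The easy inequality $B\le A$ then follows: if $S^a\tleq\theta_\mu$, i.e.\ $\DD(S^a)\subset\DD(\theta_\mu)$, then $S^b\in\DD(\theta_\mu)$ for every $b<a$, so $b\le A$, and letting $b\uparrow a$ gives $a\le A$; taking the supremum yields $B\le A$.

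For the reverse inequality I would fix $a<A$ and prove $S^a\tleq\theta_\mu$, so that $a\le B$ and hence $A\le B$ after taking the supremum. Choose $a'$ with $a<a'<A$. Since $a'<A$, the interval property gives $S^{a'}\in\DD(\theta_\mu)$, so there is a nonzero $g\in N[\bar\theta_\mu S^{a'}]$. Let $J\in\DD(S^a)$ be arbitrary; the goal is $J\in\DD(\theta_\mu)$. From $J\in\DD(S^a)$ choose a nonzero $f\in N[\bar S^a J]$; then $Jf=S^a\bar p\in K_{S^a}$ is an $L^2$ function of exponential type at most $a$. The key intermediate claim is that one can produce a nonzero $k\in N^\infty[\bar S^{a'}J]$, i.e.\ a bounded $k$ with $Jk$ of spectrum in $[0,a']$. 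Granting this, set $h=gk$: then $h\in H^2$ (as $g\in H^2$ and $k\in H^\infty$), $h\not\equiv0$, and
$$\bar\theta_\mu Jh=(\bar\theta_\mu S^{a'}g)(\bar S^{a'}Jk)\in\bar H^2,$$
because the first factor lies in $\bar H^2$ and the second in $\bar H^\infty$. Hence $h\in N[\bar\theta_\mu J]$ and $J\in\DD(\theta_\mu)$, so $\DD(S^a)\subset\DD(\theta_\mu)$ as required.

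The heart of the matter, and the step I expect to be the main obstacle, is the intermediate claim that $J\in\DD(S^a)$ forces $N^\infty[\bar S^{a'}J]\neq0$ for any $a'>a$; this is exactly where the Beurling--Malliavin multiplier theorem enters. By Theorem \ref{tBMTO1} we have $r(J)\le a<a'$, and one must upgrade the $L^2$ witness $Jf\in K_{S^a}$ to a bounded one at the cost of enlarging the type from $a$ to $a'$. Concretely I would seek a nonzero outer $m\in H^\infty$ of exponential type $\le a'-a$ with $|m|(1+|f|)\le1$ on $\R$; then $k=mf\in H^\infty$, and $Jk=mJf$ is bounded with spectrum contained in $[0,a]+[0,a'-a]\subset[0,a']$, giving $k\in N^\infty[\bar S^{a'}J]$. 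Existence of $m$ is the content of the multiplier theorem, whose summability hypothesis $\int\log^+|f|\,d\Pi<\infty$ holds because $f\in H^2$; the delicate point is the regularity (almost-decreasing) condition on the weight $\log^+|f|$, which I would handle by first passing to a sufficiently regular majorant of $\log^+|f|$, or by exploiting the finite-type structure of $Jf\in K_{S^a}$, before invoking the theorem. Assembling the three steps establishes $A=B$, i.e.\ $\TT_\mu=\sup\{a\mid S^a\tleq\theta_\mu\}$.
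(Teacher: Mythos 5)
Your proposal follows the route the paper indicates (it gives no detailed proof, only the remark that the statement follows from Theorem \ref{prop600} via the Beurling--Malliavin multiplier theorem): compare the two suprema, use the interval property of $\{a:S^a\in\DD(\theta_\mu)\}$ for $B\le A$, and for $A\le B$ use the product trick $\bar\theta_\mu Jh=(\bar\theta_\mu S^{a'}g)(\bar S^{a'}Jk)$, which is the same device as Lemma \ref{lemdom}. Two remarks on the step you single out as the heart of the matter. First, it can be short-circuited: if $f\in N[\bar S^aJ]$ then $Jf\in K_{S^a}$, and every function in $K_{S^a}$ is bounded on $\R$ (its Fourier transform lies in $L^2[0,a]\subset L^1[0,a]$), so by the Smirnov maximum principle $f\in H^\infty$ and likewise the conjugate factor $p$; hence $N^\infty[\bar S^aJ]\neq 0$ with no loss of type and no multiplier theorem at all --- this is exactly the observation ``$Ig\in S^{b/2}PW_{b/2}$ implies $g\in H^\infty$'' used in the paper's proof of Theorem \ref{tBMTO2}, and it lets you take $a'=a$ and $k=f$. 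Second, if you do route the argument through the multiplier theorem, the delicate point is not the regularity of $\log^+|f|$ but membership of $m$ in $H^\infty(\C_+)$: an entire function of positive exponential type bounded on $\R$, such as $\cos((a'-a)z)$, is unbounded in $\C_+$, and for such an $m$ the function $k=mf$ would fail to lie in $H^\infty$ and $h=gk$ to lie in $H^2$. You do stipulate that $m$ be outer in $H^\infty$, which the standard form of the multiplier theorem permits, so the argument closes; but this hypothesis is doing real work and should be stated as part of the claim rather than left implicit. With either fix the proof is complete and correct.
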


\subsection{Sampling measures}\label{secS}

\ms\no  Let $\mu,\nu$ be two positive Poisson-finite measures such that the Hilbert (Cauchy) transform is bounded from $L^2(\mu)$
to $L^2(\nu)$. Initially one can understand this property in the sense that for a dense family of functions $f\in L^2(\mu)$
the Cauchy integral $Kf\mu$ in the upper half-plane has non-tangential boundary values $f^*(x)$ at $\nu$-a.e. point $x$ and the
norm estimate $||f^*||_{L^2(\nu)}\leq C||f||_{L^2(\mu)}$ holds for all $f$ from that family with a uniform $C$. It follows from a theorem by Aleksandrov \cite{Alexandrov} that
then $f^*$ actually exists $\nu$-a.e. for all $f\in L^2(\nu)$ (and the same norm estimate holds). The general two-weight Hilbert problem asks to describe
pairs of measures with this property.

\ms\no Extensive studies of the 'Tauberian' version of the two-weight Hilbert problem were started in \cite{NTV} and  recently completed in \cite{L, L&Co}. These important results produced
a real analytic description of pairs $\mu$ and $\nu$. Our goal in this section is to connect this  problem with TO.

\ms\no Once again, if $\mu$ is a positive singular Poisson-finite measure on $\hat \R$ we denote by $\theta_\mu$ the corresponding inner function, i.e., the function
whose Clark measure is $\mu$.
By a theorem from \cite{PoltClark}, every function $f$ from the model space $ K_{\theta_\mu}$ has non-tangential boundary values a.e. with respect to $\mu$.
The operator of embedding $ K_{\theta_\mu}\to L^2(\mu)$ is a unitary operator. As was mentioned before, this statement generalizes the Parseval theorem from $K_S$ and the counting measure of $\Z$, which is the Clark measure for $S$, to an arbitrary model space and the corresponding Clark measure. The function $f\in K_{\theta}$ can be recovered from its boundary values in $L^2(\mu)$ via the formula $f=(1-\theta)Kf\mu$.

\ms\no Some of these connections have already been used in our discussion of TO. To summarize these relations let us recall that the dominance set
of $\theta=\theta_\mu$ is the set of all inner divisors of functions from $K_\theta$. As was discussed in the last section,
	$$\DD(\theta_\mu)=\{I|I \textrm{ is an inner divisor of }Kf\mu,\ f\in L^2(\mu)\}.$$

\ms\no Let us now return to a pair of Poisson-finite measures $\mu$ and $\nu$ such that the Cauchy transform is bounded from $L^2(\mu)$
to $L^2(\nu)$. In view of the above, this is equivalent to saying that  $K_{\theta_\mu}$ is embedded (via passing from a function
to its non-tangential boundary values) into $L^2(\eta), \ \eta=|K\mu|^2\nu$ (or $|1-\theta_\mu|^{-2}\nu$). Note that under the
condition of boundedness of the Cauchy transform, the integral $K\mu$, or equivalently the inner function $\theta_\mu$, have non-tangential boundary
values $\nu$-a.e. and the above definition of $\eta$ makes sense.

\ms\no In the case when the measures $\mu$ and $\nu$ are discrete the condition of boundedness of the Cauchy transform can be reformulated in terms of
de Branges spaces. Recall that we denote by $E_\mu$ an Hermite-Biehler function such that $E_\mu K_{\theta_\mu}=B(E_\mu)$. The boundedness of the Cauchy transform is equivalent to the boundedness of the natural embedding of  $B(E_\mu)$  into $L^2(\gamma), \ \gamma=|E_\mu K_\mu|^2\nu$. Note
that if $E_\mu=A_\mu+iB_\mu$ is the standard representation of $E_\mu$ ($A_\mu,B_\mu$ are real entire functions, $2A_\mu=E_\mu+E_\mu^\#,\ 2iB_\mu
=E_\mu-E^\#_\mu$) then $|E_\mu K_\mu|=(A_\mu^2+B_\mu^2)/|B_\mu|$.

\ms\no We say that a positive measure $\nu$ on $\hat \R$ is sampling for a Banach space $H$ of analytic functions in $\C_+$ if the non-tangential limits
$f^*(x)$ exist $\nu$-a.e. for a dense family of $f\in H$ and
$$||f||_H\asymp ||f^*||_{L^2(\nu)}.$$

\ms\no An important case of the two-weight Hilbert problem is when
$$||f||_{L^2(\mu)}\asymp ||Kf\mu||_{L^2(\nu)}.$$
In view of our discussion above, this is equivalent to the property that $\eta=|K\mu|^2\nu$ is a sampling measure for
$K_{\theta_\mu}$. The general property, when the Cauchy transform is only norm-bounded from above, can be reduced to the sampling case by adding
the Clark measure $\mu_{-1}$ to $\eta$.
Namely, if $\mu=\sigma_1$ is the Clark measure for $\theta$, let us denote by $\mu_{-1}=\sigma_{-1}$ the Clark dual measure, see Section \ref{secMod}.
The Cauchy transform is bounded from $L^2(\mu)$
to $L^2(\nu)$ iff $\tau=\eta+\mu_{-1}$ is a sampling measure for $K_{\theta}$.

\ms\no Reformulating Clark theory for MIFs in terms the corresponding de Branges spaces, we may notice that for any Poisson-finite positive discrete measure $\mu$ on $\R$
there exists a unique regular de Branges space $B(E)$ such that
$B(E)=L^2(\mu)$ and $\supp\mu=\{E=\bar E\}$. We will denote the corresponding HB function
by $E^\mu$ and the MIF $(E^\mu)^\#/E^\mu$ by $I^\mu$. The measure $\mu$ is called a de Branges measure for $B(E^\mu)$. Note the following clear
connection with the Clark measure $\sigma$ for $I^\mu$:
$$\mu=\sigma/|E^\mu|^2.$$
Other Clark measures $\sigma_\alpha,\ \alpha\in\T$ produce other de Branges measures to form the family of de Branges measures for the given space.

\ms\no As we saw above, the two-weight Hilbert problem is directly related to the problem of description of sampling measures for model spaces $K_\theta$.
If $\theta$ is a MIF and $\nu$ is a discrete Poisson-finite measure on $\R$ then $\nu$ is sampling for $K_\theta$ if and only if $\nu/|E_\theta|$
is sampling for $B(E_\theta)$, where $E_\theta$ is any HB function such that $(E_\theta)^\#/E_\theta=\theta$. Thus, in the case of discrete measures,
the two-weight problem connects to the description of sampling measures for de Branges spaces.

\ms\no Finally, for the last problem we have the following reformulation in terms of TO.
Any measure satisfying
$$||f||_{B(E)}=||f||_{L^2(\mu)}$$
is called a spectral measure for $B(E)$. Any de Branges space $B(E)$ possesses an infinite family of spectral measures with the de Branges measure defined above being one of them. The spectral measures for a given de Branges space are de Branges measures for the space, de Branges measures for larger de Branges
spaces in the chain which contains the given space, and limits of such measures along the chain.
The set of spectral measures of a given de Branges space is quite well understood in Krein-de Branges theory.
Those measures are spectral measures for the corresponding Krein canonical systems of differential equations, see \cite{dBr, DM, MPS}.

\ms\no The following statement follows from Theorem \ref{t500}.

\begin{theorem}
Let $\mu,\nu$ be two positive discrete Poisson-finite measures on $\R$. TFAE

\ms\no 1) The Hilbert (Cauchy) transform is bounded from $L^2(\mu)$
to $L^2(\nu)$.

\ms\no 2) The measure $\eta=(\nu+\mu_{-1})/|E_\mu|^2$ is a spectral measure for some $B(F)$ such that  $\theta_F\tsim \theta_\mu$.
\end{theorem}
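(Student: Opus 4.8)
The plan is to push the statement through the dictionary between the two-weight Hilbert problem and sampling assembled in the discussion preceding it, and then to observe that a sampling measure is exactly a spectral measure of a re-normed but set-theoretically unchanged de Branges space. First I would dispose of the translation. By the reductions recalled above, boundedness of the Cauchy transform from $L^2(\mu)$ to $L^2(\nu)$ is equivalent to $\tau=|K\mu|^2\nu+\mu_{-1}$ being a sampling measure for $K_{\theta_\mu}$: the summand $\mu_{-1}$ supplies the lower bound (every $g\in K_{\theta_\mu}$ satisfies $\|g\|_{L^2(\mu_{-1})}=\|g\|_{K_{\theta_\mu}}$), while the norm estimate for the transform is exactly the upper bound. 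Dividing by $|E_\mu|^2$ turns model-space sampling into de Branges sampling, so statement (1) is equivalent to the measure $\eta$ of the theorem being a sampling measure for $B(E_\mu)$. I treat the identification of $\tau/|E_\mu|^2$ with $\eta$ as the routine bookkeeping already prepared in that discussion.

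The heart of the argument is a lemma I would isolate: for a de Branges space $B(E)$ and a positive measure $\eta$ on $\R$, the measure $\eta$ is sampling for $B(E)$ if and only if $\eta$ is a spectral measure of some de Branges space $B(F)$ with $B(F)\seq B(E)$. The easy direction is the implication from the right: if $B(F)\seq B(E)$ the two de Branges norms are equivalent (by the closed graph theorem, both spaces having bounded point evaluations), and the spectral property makes $\|\cdot\|_{B(F)}$ equal to $\|\cdot\|_{L^2(\eta)}$, so composing yields the sampling equivalence for $B(E)$. For the converse I would equip the set $B(E)$ with the inner product inherited from $L^2(\eta)$; since $\eta$ is sampling this norm is equivalent to $\|\cdot\|_{B(E)}$, so the re-normed space is again a complete Hilbert space of entire functions. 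It satisfies the de Branges axioms: (A3) holds because $F\mapsto F^\#$ preserves $|F|$ on $\R$ and hence the $L^2(\eta)$-norm; (A1) holds because $(z-\bar\l)/(z-\l)$ is unimodular on $\R$ while $\eta$ is carried by $\R$, so the $L^2(\eta)$-norm is unchanged; and (A2) is immediate from norm equivalence. By de Branges' axiomatic theorem (Theorem~23 in \cite{dBr}) this re-normed space equals $B(F)$ for some HB function $F$, and the identification is \emph{isometric}, i.e. $\|\cdot\|_{B(F)}=\|\cdot\|_{L^2(\eta)}$; thus $\eta$ is a spectral measure of $B(F)$, with $B(F)\seq B(E)$ by construction.

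To pass between $B(F)\seq B(E_\mu)$ and $\theta_F\tsim\theta_\mu$ I would invoke Theorem~\ref{t500}. For $(1)\Rightarrow(2)$, the space $B(F)$ produced above is literally the same set as $B(E_\mu)$; choosing any nonzero $F_0$ in this common space, both $F_0/E_\mu$ and $F_0/F$ lie in $H^2(\C_+)$, whence $E_\mu/F\in\NN(\C_+)$, the regularity hypothesis of Theorem~\ref{t500} is met, and $\theta_F\tsim\theta_{E_\mu}=\theta_\mu$ follows, giving (2). For $(2)\Rightarrow(1)$, starting from $\theta_F\tsim\theta_\mu$ Theorem~\ref{t500} gives $B(F)\seq B(E_\mu)$; the easy half of the lemma then upgrades the spectral property of $\eta$ for $B(F)$ to a sampling property for $B(E_\mu)$, which by the first step is precisely (1).

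The genuinely non-formal point, and the step I expect to be the main obstacle, is the forward half of the lemma: verifying that re-norming by a sampling measure really yields a de Branges space and, crucially, that Theorem~23 furnishes an \emph{isometric} identification, so that the two-sided (constants) sampling property is promoted to an exact spectral property of the new space $B(F)$. A secondary technical matter is keeping the HB representatives inside the regularity class $E/\tilde E\in\NN(\C_+)$ required by Theorem~\ref{t500} in both directions; the choice of a common nonzero element is exactly what secures this.
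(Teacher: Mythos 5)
Your argument is correct and follows exactly the route the paper intends: the paper offers no detailed proof here, only the remark that the statement ``follows from Theorem \ref{t500}'' together with the preceding dictionary (boundedness of the Cauchy transform $\Leftrightarrow$ sampling for $K_{\theta_\mu}$ after adding $\mu_{-1}$, transferred to $B(E_\mu)$), and your isolated lemma --- re-norming $B(E_\mu)$ by a sampling measure and invoking the de Branges axioms to realize it isometrically as some $B(F)\seq B(E_\mu)$, then applying Theorem \ref{t500} --- is precisely the missing content. The only caveat, which you already flag, is the regularity bookkeeping ($F/E_\mu\in\NN(\C_+)$) needed to run Theorem \ref{t500} in the direction $(2)\Rightarrow(1)$; this is an imprecision inherited from the theorem's statement rather than a defect of your argument.
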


\ms\no Note that the condition $\theta_F\tsim \theta_\mu$ means that the inner factors of functions from $B(F)$ in $\C_+$ are the same as
inner factors of Cauchy integrals $Kf\mu,\ f\in L^2(\mu)$.

\ms\no In \cite{OS} a theorem by de Branges from \cite{dBr} was applied to describe sampling sequences for the Paley-Wiener space.
Recall that the Paley-Wiener space is a de Branges space with $E=S^{-1}$.
Using the same ideas
we can formulate the following statement in terms of TO.

\begin{theorem} $\nu$ is a sampling measure for
$B(E)$ iff
$$P\nu=\Re\frac{F+F^\#\phi}{F-F^\#\phi}$$
for some HB function  $F$  such that $\theta_F\tsim \theta_E$ and some $\phi\in H^\infty,\ ||\phi||\leq 1$.
\end{theorem}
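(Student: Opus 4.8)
\emph{Strategy.} The plan is to prove that sampling measures for $B(E)$ are exactly the spectral measures of de Branges spaces that coincide with $B(E)$ as sets, and then to route this through two facts already available: Theorem \ref{t500}, which identifies such equivalent spaces with the Toeplitz class of $\theta_E$, and the de Branges parametrization of spectral measures (the theorem of \cite{dBr} used in \cite{OS}), which is precisely the Herglotz-type formula in the statement. The pivotal reduction I would isolate first is the equivalence: $\nu$ is sampling for $B(E)$ iff $\nu$ is a spectral measure of some de Branges space $B(F)$ with $B(F)\seq B(E)$ (equal as sets, hence with equivalent norms). Recall that $\frac{F+F^\#\phi}{F-F^\#\phi}=\frac{1+\theta_F\phi}{1-\theta_F\phi}$, so the right-hand side depends only on $\theta_F$ and on the contractive $\phi$, and has positive real part whenever $\phi\in H^\infty$, $\|\phi\|\leq 1$.

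\emph{The ``if'' direction.} Given $P\nu=\Re\frac{F+F^\#\phi}{F-F^\#\phi}$ with $\theta_F\tsim\theta_E$ and $\|\phi\|\leq 1$, I would argue as follows. By the de Branges representation of spectral measures, this formula says exactly that $\nu$ is a spectral measure of $B(F)$, i.e. $\|f\|_{B(F)}^2=\int|f|^2\,d\nu$; as a sanity check the constant case $\phi=\bar\alpha$ returns $\Re\frac{\alpha+\theta_F}{\alpha-\theta_F}$, whose Herglotz measure is the Clark measure underlying the de Branges measure $\sigma/|F|^2$. Next, since $\theta_F\tsim\theta_E$, Theorem \ref{t500} gives $B(F)\seq B(E)$, so the two norms are equivalent. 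Hence $\int|f|^2\,d\nu=\|f\|^2_{B(F)}\asymp\|f\|^2_{B(E)}$, which is the sampling estimate.

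\emph{The ``only if'' direction.} Suppose $\|f\|^2_{B(E)}\asymp\int|f|^2\,d\nu$. I would equip $B(E)$ with the equivalent norm $\|f\|_\star^2=\int|f|^2\,d\nu$ and verify the de Branges axioms (A1)--(A3) for $(B(E),\|\cdot\|_\star)$. Axioms (A2) and (A3) are immediate: point evaluations remain bounded because $\|\cdot\|_\star\asymp\|\cdot\|_{B(E)}$, and $\|f^\#\|_\star=\|f\|_\star$ since $|f^\#|=|f|$ on $\R$ and $\nu$ lives on $\R$. For (A1) one only needs that multiplying $f$ by the factor $(z-\bar\l)/(z-\l)$, which is unimodular on $\R$, preserves $\|\cdot\|_\star$; membership of the resulting function in the underlying set is inherited from $B(E)$. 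By Theorem 23 of \cite{dBr} the space is $B(F)$ for some HB function $F$, and by construction $\nu$ is its spectral measure. Since $B(F)\seq B(E)$ we have $E/F\in\NN(\C_+)$ (two HB functions generating the same set have ratio of bounded type), so Theorem \ref{t500} yields $\theta_F\tsim\theta_E$; applying the de Branges representation to $B(F)$ produces the contractive $\phi$ realizing the formula.

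\emph{Main obstacle.} The substantive input, which I would cite rather than reprove, is the de Branges theorem describing \emph{all} spectral measures of a fixed de Branges space by the contractive-function formula --- the result of \cite{dBr} applied in \cite{OS} --- and this is where the analytic content sits. The remaining difficulties are bookkeeping: staying within the regularity class $E/F\in\NN(\C_+)$ so that Theorem \ref{t500} applies in both directions, and tracking the $|F|^2$ normalization relating the Herglotz (Clark-type) measure of the right-hand side to the de Branges measure, exactly as in the identity $\mu=\sigma/|E^\mu|^2$ recorded earlier. Finally, a possible point mass at infinity in the Herglotz representation (the term linear in $y$) corresponds to the type/exponential component and would be absorbed into the behavior at $\infty$, paralleling the role of $S^a$ throughout the paper.
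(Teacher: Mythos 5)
Your proposal follows essentially the same route the paper intends: the paper gives no detailed proof of this theorem, only pointing to the combination of Theorem \ref{t500} (Toeplitz equivalence $=$ equality of de Branges spaces as sets) with the de Branges parametrization of spectral measures from \cite{dBr} as used in \cite{OS}, and your reduction of sampling to being a spectral measure of an equal-as-sets space $B(F)$ --- carried out by renorming and re-verifying axioms (A1)--(A3) --- is exactly that argument made explicit. The normalization (the $|F|^2$ weight relating Clark-type and de Branges measures) and regularity ($E/F\in\NN(\C_+)$) caveats you flag are genuine, but they are imprecisions in the paper's own statement rather than gaps in your argument.
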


\section{Concluding remarks and further questions}

\ms\no For  decades numerous problems in complex function theory were motivated by Functional and Spectral Analysis.
This tradition can be traced to Beurling's description of invariant subspaces of the shift operator using
inner functions, proofs of  uniqueness theorems for Schr\"odinger operators by Borg and Marchenko, etc.
One of the main sources
for such problems is the Nagy-Foias functional model theory mentioned in this text, see \cite{Nikolski}. Problems
appearing in this context are related to inner functions and model spaces $K_\theta$. The Krein-de Branges theory \cite{dBr, DM}, created
to treat spectral problems for differential operators, is another source of such problems where the main objects are
entire functions and meromorphic inner functions.

\ms\no Recent developments in the Toeplitz approach to UP have raised a large number of new  questions. One of the main goals of this paper was to outline some of such questions and
bring them to the attention of the experts in complex function theory. In conclusion, let us give a brief summary of problems
appearing in relation to TO. This is only a small sample of questions from the area of the Toeplitz approach, and interested reader
will be able to find many more challenging problems in other sources, including those cited in this note.

\ms\no Giving metric conditions for TO and TE seems to be a natural question.
In particular, producing conditions for two Blaschke products $B_1$ and $B_2$ to satisfy $B_1\tsim B_2$ or $B_1\tleq B_2$ in terms
of their zeros or arguments presents the main version of this problem. While giving 'if and only if' conditions for general inner functions
may be out of reach at the moment, solving this problem for the inner functions from a restricted class like in Theorem \ref{tMIF2} or fixing one of the functions like in Theorems \ref{tBMTO1} and \ref{tBMTO2} would still be interesting (and challenging). Let us point out that one of the important cases in Theorem \ref{tMIF2} when $\kappa<0$, which appears in applications (see \cite{MIF2}), is still open. Such results would provide generalizations
of BM theory and numerous other applications.

\ms\no As we saw in the text, study of the structure of the dominance set $\DD(\theta)$, and in particular providing metric conditions in terms of $\theta$ for
a given function $I$ to be a base or total element in $\DD(\theta)$, generalizes problems on completeness, minimality and zero sets in model and de Branges spaces. While such problems are well understood in the standard Paley-Wiener spaces, where they become equivalent to BM problem and related problems on
exponential families, as well as spectral problems for regular Schr\"odinger and Dirac operators (see \cite{MIF1}), they are mostly open for $\theta\not = S$.

\ms\no Among questions more specific to Toeplitz order, let us mention the natural question of finding supremum or infimum, with respect to the relations
$\tleq$ and $\tgeq$, for a given collection of inner functions. Starting with finite collections and functions from restricted classes, this
problem can reach any desired level of difficulty. Even its first step, determining the existence of an upper (lower) bound, which is simple
for finite collections, can be interesting for the infinite ones.

\ms\no Another problem mentioned in the text is the problem of finding the Toeplitz hull for a given collection of functions,
i.e., the smallest dominance set $\DD(\theta)$ containing the collection. Some of the cases of this problem are equivalent to
finding the smallest de Branges or model space for a given collection of zero sets.  A version of the same question corresponds
to finding the differential operator for which the given sequences are (are not)  defining, in
the terminology of \cite{MIF2}.

\ms\no Apart from pure function theoretic problems, finding further connections from the objects of this note to problems of UP and spectral theory remains one of natural directions for research.


\begin{thebibliography}{24}


\bibitem{Alexandrov} {\sc Aleksandrov, A.} {\it Isometric embeddings of
	coinvariant subspaces of the shift
	operator,}  Zap. Nauchn.
Sem. S.-Peterburg. Otdel. Mat.
Inst. Steklov.
(POMI)  232
(1996),  Issled. po Linein Oper. i Teor. Funktsii. 24,
5--15,
213; translation in  J. Math. Sci. (New York)  92
(1998),
no.~1, 3543--3549

\bibitem{ACL} {\sc E. Andruchow, E. Chiumiento and  G. Larotonda} {\it Geometric significance of Toeplitz kernels,}  preprint.


\bibitem{BS}{\sc Baranov, A. and Sarason, D.} {\it Quotient representations of inner functions,} Recent Trends in Analysis, Proceedings of the conference in honor of Nikolai
Nikolski, Theta Foundation, Bucharest, 2013, pp.
35-46.


\bibitem{Belov}{\sc Belov, Yu.} {\it Complementability of exponential systems,} Comptes Rendus Math.,
Vol. 353, Issue 3, March 2015,  215-218

\bibitem{BM1}  {\sc Beurling, A., Malliavin, P.} {\it
	On Fourier transforms of measures with compact support,}
Acta Math.  107 (1962), 291--302

\bibitem{BM2}  {\sc Beurling, A., Malliavin, P.} {\it
	On the closure of characters and the zeros of entire functions,}
Acta Math.  118 (1967), 79-93



\bibitem{dBr}  {\sc De Branges, L.} {\it Hilbert spaces of entire functions.} Prentice-Hall,
Englewood Cliffs, NJ, 1968
	
	
\bibitem{Cl} {\sc Clark, D.}  {\it One dimensional perturbations of restricted shifts,}
J.  Anal.  Math. 25 (1972),   169-91.


\bibitem{Dya}{\sc Dyakonov, K.} {\it Zero sets and multiplier theorems for star-invariant subspaces.} J. Anal. Math. 86 (2002),
247-269.


\bibitem{DM}{\sc Dym H, McKean H.P.} {\it Gaussian processes, function theory and the inverse spectral problem}
Academic Press, New York,  1976


\bibitem{FHR}{\sc E. Fricain, A. Hartmann and B. Ross,} {\it  Multipliers between model spaces.} Studia Math., 240(2), 177–191, 2018.



\bibitem{Garnett}{\sc Garnett, J.  } {\it Bounded analytic functions.} Academic Press, New York, 1981



\bibitem{HJ}{\sc Havin, V. P.,  J\"oricke, B.} {\it The uncertainty principle in harmonic analysis.}
Springer-Verlag, Berlin, 1994.


\bibitem{HNP}{\sc Hruschev S.,  Nikolskii, N.,  Pavlov,  B.} {\it
	Unconditional bases of
	exponentials and of reproducing kernels,}
Lecture Notes in Math., Vol.  864,  214--335

\bibitem{Koosis} {\sc  Koosis, P.} {\it The logarithmic integral, Vol. I \ \& II,} Cambridge Univ. Press, Cambridge, 1988

\bibitem{KoosisHp} {\sc  Koosis, P.} {\it Introduction to $H^p$ spaces.} Cambridge Univ. Press, Cambridge, 1980

\bibitem{Krein1} {\sc Krein, M. G.} {\it On an extrapolation problem of A. N. Kolmogorov,} Dokl. Akad. Nauk SSSR 46 (1945),
306--309 (Russian).


\bibitem{Krein2} {\sc Krein, M. G.} {\it On a basic approximation problem of the theory of extrapolation and filtration of stationary random processes,} Doklady Akad. Nauk SSSR (N.S.)  94,  (1954),  13--16 (Russian).

\bibitem{L}{\sc Lacey, M. T.}{\it The Two Weight Inequality for the Hilbert Transform: A Primer}, preprint
	
\bibitem{L&Co}{\sc Michael T. Lacey, Eric T. Sawyer, Chun-Yen Shen, Ignacio Uriarte-Tuero} {\it Two Weight Inequality for the Hilbert Transform: A Real Variable Characterization, I}
Duke Math. J. 163, no. 15 (2014), 2795-2820	
	
	
\bibitem{MIF1} {\sc  Makarov, N.,  Poltoratski, A.} {\it Meromorphic inner functions, Toeplitz kernels, and the uncertainty principle,} in {\it Perspectives in Analysis}, Springer Verlag, Berlin, 2005, 185--252

\bibitem{MIF2} {\sc  Makarov, N.,  Poltoratski, A.} {\it  Beurling-Malliavin theory for Toeplitz kernels,}
Invent. Math., Vol. 180, Issue 3 (2010), 443-480
	
\bibitem{Uncertainty}{\sc Makarov, N., Poltoratski, A.} {\it Two Spectra Theorem with Uncertainty,}	 preprint

\bibitem{Etudes}{\sc Makarov, N., Poltoratski, A.} {\it Etudes in spectral problems,}	 preprint

	
	
\bibitem{MPS}{\sc Makarov, N., Poltoratski, A., Sodin, M.} {\it Lectures on Linear Complex Analysis}, in preparation.
	
	
	
\bibitem{Nikolski} {\sc Nikolskii, N. K.} {\it Treatise on the shift
	operator},  Springer-Verlaag, Berlin (1986)
	
\bibitem{NTV}{\sc F. Nazarov, S. Treil and A. Volberg,} {\it Two weight estimate for the Hilbert transform and corona decomposition for non-doubling measures,} preprint.


	
\bibitem{OS}  {\sc  Ortega-Cerd\`a, J., Seip, K.} {\it Fourier frames,} Annals of Math. 155 (2002), 789--806
	


\bibitem{PoltClark}{\sc  A. Poltoratski}, {\it On the boundary behavior of pseudocontinuable
	functions,}
St. Petersburg Math. J.,{\bf 5 } (1994), 389--406.



\bibitem{PolSar} {\sc Poltoratski, A. and Sarason, D.} {\it Aleksandrov-Clark measures,} Recent advances in operator-related function theory, 1--14, Contemp. Math., 393, Amer. Math. Soc., Providence, RI, 2006


\bibitem{GAP} {\sc Poltoratski, A.} {\it Spectral gaps for sets and measures}, Acta Math., 2012, Volume 208, Number 1, pp. 151-209.



\bibitem{Type}{\sc
	Poltoratski, A.},
{\it Problem on completeness of exponentials},
Ann. Math.,
178,
983--1016,
2013

\bibitem{CBMS}{\sc  Poltoratski, A.} {\it Toeplitz Approach to Problems of the Uncertatinty Principle,}	book in CBMS seies, AMS/NSF, 2015


\bibitem{SY} {\sc Sodin, M. and  Yuditskii, P.} {\it Another approach to de Branges� theorem on weighted polynomial
	approximation,} Proceedings of the Ashkelon Workshop on Complex Function Theory
(1996), 221�227, Israel Math. Conf. Proc., 11, Bar-Ilan Univ., Ramat Gan, 1997.

\bibitem{WienerBook} {\sc Wiener, N.} {\it Extrapolation, Interpolation, and Smoothing of Stationary Time Series,} M.I.T. press, 1949.

\end{thebibliography}
\end{document}